\documentclass[11pt]{amsart}

\usepackage[a4paper,  left=25mm, right=25mm, top=25mm, bottom=25mm]{geometry}

\usepackage[utf8]{inputenc}
\usepackage{stmaryrd}
\usepackage{amsmath}
\usepackage{amssymb}
\usepackage{amsfonts}
\usepackage{amsthm}
\usepackage{bbm}
\usepackage{mathrsfs}
\usepackage{graphicx}
\usepackage{enumerate}
\usepackage[dvipsnames]{xcolor}
\usepackage[colorlinks=true,linkcolor=NavyBlue,urlcolor=RoyalBlue,citecolor=PineGreen,bookmarks=true,bookmarksopen=true,bookmarksopenlevel=2,unicode=true,linktocpage]{hyperref}

\usepackage[]{todonotes}

\newtheorem{theorem}{Theorem}[section]
\newtheorem{lemma}[theorem]{Lemma}
\newtheorem{proposition}[theorem]{Proposition}

\newtheorem{corollary}[theorem]{Corollary}

\newtheorem{assumption}[theorem]{Assumption}
\newtheorem{remark}[theorem]{Remark}

\def\scF{{\mathscr F}^{\text{\raisebox{1.5pt}{$\scriptscriptstyle\rightarrow$}}}}
\def\rF{{\sf F}}
\def\cyc{{\mathsf c}} 
\def\pcyc{{\mathsf p}} 
\def\Cyc{{\sf C}} 
\def\C{{\mathbb C}} 
\def\CMS{{\mathscr C}} 
\def\CMSf{\CMS_{\leq \ul{n}}}

\def\Map{{\mathfrak M}} 
\def\S{{\mathfrak{S}}} 
\def\cy{{(\!(}} 
\def\yc{{)\!)}}

\def\bl{{\mathsf{b}}} 

\def\build#1_#2^#3{\mathrel{\mathop{\kern 0pt#1}\limits_{#2}^{#3}}}

\def\Cyc{{\sf C}}

\newcommand{\diag}{\mathsf{diag}}
\def\E{{\sf E}}
\newcommand{\rE}{\mathsf{E}}
\newcommand{\bE}{\mathbb{E}}

\def\epsilon{\varepsilon}

\newcommand{\rG}{\mathsf{G}}
\def\ge{\geqslant}
\def\geq{\geqslant}
\def\GL{{\mathrm{GL}}}

\def\hb{h^{\text{\scalebox{0.8}{\rotatebox{90}{$\bowtie$}}}}}
\def\Hcyc{{\mathscr H}}
\def\HF{{\mathscr{H\! F}}^{\text{\raisebox{1.5pt}{$\scriptscriptstyle\rightarrow$}}}}
\newcommand{\hol}{\mathsf{hol}} 
\def\set#1{{\llbracket #1 \rrbracket}} 
\def\int#1{{[\![#1]\!]}}
\def\le{\leqslant}
\def\leq{\leqslant}

\newcommand{\cP}{\mathcal{P}} 

\def\phi{\varphi}
\newcommand{\proj}[1]{{\sf\Pi}^{#1}}

\def\s{{\sf s}}

\newcommand{\SL}{\mathrm{SL}}

\def\Tr{\mathop{\rm Tr}}

\newcommand{\cU}{\mathscr{U}}
\def\ul{\underline}

\newcommand{\rV}{\mathsf{V}} 
\def\val{{\rm val}} 
 
\newcommand{\cW}{\mathcal{W}} 
\def\Z{{\mathbb Z}}
\def\carre{\raisebox{-1.5pt}{\rotatebox{45}{$\diamond$}}} 

\title{Trace identities for quiver representations}

\author{Adrien Kassel}
\address{Adrien Kassel -- CNRS -- UMPA, ENS de Lyon}
\email{adrien.kassel@ens-lyon.fr}

\author{Thierry L\'evy}
\address{Thierry L\'evy -- LPSM, Sorbonne Universit\'e, Paris}
\email{thierry.levy@sorbonne-universite.fr}

\date{\today}

\keywords{quiver representation, block matrix, graph, cycles, twisted Laplacian, determinant, holonomy}

\subjclass[2020]{05C50, 05C30, 05C22, 15A15}

\begin{document}

\begin{abstract}
We give an expression for the determinant of the twisted Laplacian associated with any linear representation of a finite quiver in terms of traces of the holonomy of its cycles. To establish this expression, we prove a general identity for the determinant of a block matrix in terms of traces of products of its blocks. We give two proofs, one purely enumerative and one using generating series. 
       
In the special case of a finite graph equipped with a vector bundle and a connection, the twisted Laplacian determinant admits a combinatorial interpretation as a weighted count of tuples of oriented cycle-rooted spanning forests, where the weights involve traces of  holonomies along cycles formed by combining the edges of the forests.
\end{abstract}

\maketitle

\setcounter{tocdepth}{1}
{\small \tableofcontents}

\section{Introduction}

\subsection{Quivers, their linear representations, and the twisted Laplacian determinant}

\subsubsection{Quiver representations}\label{sec:graphsandlaplacian}

Let $\rG=(\rV,\rE)$ be a quiver, that is a finite directed graph, where~$\rV$ is the set of its vertices and~$\rE$ is the set of its (directed) edges. For each vertex~$v\in \rV$, let $n_v\ge 1$ be a positive integer, and let $\ul{U}=(U_e)_{e\in \rE}$ be complex matrices indexed by edges, where for each edge $e$ joining vertex $u$ to vertex $v$, the matrix~$U_e$ is of size $n_u\times n_v$. Since the matrices $U_e$ over edges have compatible dimensions when edges meet at a vertex, they can be multiplied along paths in the digraph: this is the setup of quiver representations, where edges of a directed graph are indexed by linear maps which can be composed along paths~\cite{Brion,Tyler}; see Figure~\ref{fig:quiver-rep}.

\begin{figure}[!ht]
    \centering
    \includegraphics[scale=.7]{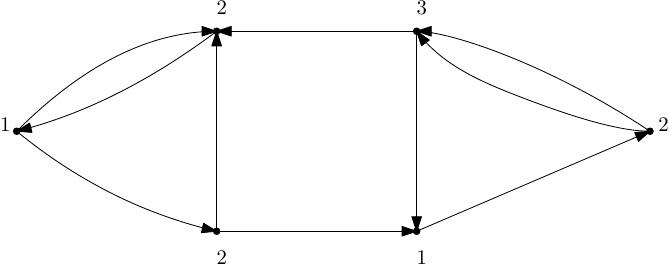}
    \caption{\small A quiver with integers on vertices representing the rank of the representation: it assigns to each arrow a matrix compatible with these dimensions (not shown).}
    \label{fig:quiver-rep}
\end{figure}

In the paper, it will be crucial to have a collection of edge weights $\ul{x}=(x_e)_{e\in \rE}$ attached to those directed edges. In general, we may think of these weights as complex numbers. Later in the paper when needed (see e.g. Section \ref{sec:euler-product}), we will specify that they be nonnegative real numbers. 

\subsubsection{Graphs versus quivers}\label{sec:graphvsquivers}

A special case of quivers is when all edges come with an inverse edge. Formally, this is the case, where there is a fixed-point free involution on $\rE$, denoted $e\mapsto e^{-1}$ such that $s(e)=t(e^{-1})$. 

In that case, we say that the quiver is a \emph{bidirected} graph, or simply a graph. In that case, we can visualize such a quiver by drawing a single bidirected arrow for each pair $\{e,e^{-1}\}$, see Figure \ref{fig:bidirected-graph}.

\begin{figure}[!ht]
    \centering
    \includegraphics[scale=.7]{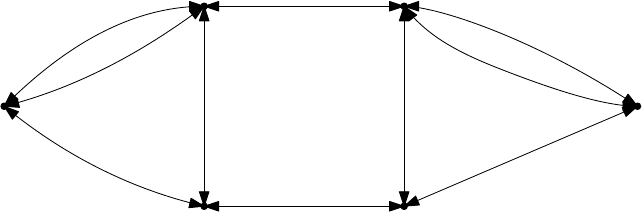}
    \caption{\small A bidirected graph which has a sub-quiver consisting in the quiver drawn in Fig.~\ref{fig:quiver-rep}.}
    \label{fig:bidirected-graph}
\end{figure}

When we do specify this additional structure, it is natural to impose some relations on the representation: if all $n_a$ are equal, we can ask the $U_e$ to be invertible and that $U_{e^{-1}}=U_e^{-1}$.

Another point of view is to always consider a quiver to be extended to a bidirected graph, without imposing any relation between the weights on $e$ and its inverse. Then we can always set $x_{e^{-1}}=0$ or $U_{e^{-1}}=0$ when we want effectively want the graph to really have only one of the two orientations for that edge which matters.

\subsubsection{Twisted quiver Laplacian}

An important operator associated to this data is the \emph{Laplacian} (also called \emph{covariant} Laplacian, \emph{connection} Laplacian, or simply, \emph{twisted} Laplacian), acting on $\prod_{v\in \rV} \C^{n_v}$ as follows: for each $f\in \prod_{v\in \rV} \C^{n_v}$ and each vertex~$v\in \rV$,  
\begin{equation}\label{eq:covLaplacian}
\Delta f (v) = \sum_{\substack{e\in \rE\\ s(e)=v}} x_e \left[ f\big(s(e)\big) - U_e f\big(t(e)\big) \right]\,,
\end{equation}
where $s(e)$ is the source-vertex, and $t(e)$ the tail-vertex, of the oriented edge $e$\footnote{Note that $U_e$ can be thought of as the matrix of a linear map from $\C^{n_v}$ to $\C^{n_u}$, hence, in the case of a bidirected graph, this is indeed the same twisted Laplacian matrix as the one defined in \cite{Kenyon, KL2, KL7} for instance.}. In any basis of the vector space~$\prod_{v\in \rV}\C^{n_v}$ adapted to its natural splitting indexed by vertices, the matrix of $\Delta$ is a $\vert \rV\vert\times \vert \rV\vert$ block matrix.

In the following, we will sometimes denote by $\C_a^{n_a}$ the vector space sitting over vertex $a$, so that $\Delta$ is an endomorphism of $\bigoplus_{a\in \rV}\C_a^{n_a}$.

\subsubsection{Kernel of the Laplacian}

If the quiver is acyclic (see Figure \ref{fig:acyclic-quiver}), then one may construct non-zero elements of the kernel of $\Delta$. Indeed, first observe that a finite acyclic quiver necessarily has a vertex, denote it $a$, with no outgoing edge. If this vertex has no incoming edge, it is isolated, and thus any vector in $\C_a^{n_a}$ extended to zero on other vertices is in the kernel of $\Delta$. If~$a$ has indegree at least~$1$, pick a non-zero vector $f_a$ in $\C_a^{n_a}$: then for any vertex $b$ connected to~$a$ by an edge~$e$, set $f_b=U_e f_a$. Continue until all vertices which are connected to $a$ are reached. Set $f$ equal to~$0$ on the other vertices (if any). Then $\Delta f=0$. 

For a general quiver, whether the kernel is zero or not depends on the representation and is essentially a reflection of the fact that the holonomies along cycles (that is the product of matrices $U_e$ along edges $e$ of the cycle) have joint eigenvectors of eigenvalue $1$.

\subsubsection{Gauge-invariance}

A quiver representation has automorphisms, the group of which is called the gauge group. 
These automorphisms are encoded by the natural action of $\prod_{v\in \rV}\GL_{n_v}(\C)$ (the so-called \emph{gauge transformations}), defined for each $(j_v)_{v\in \rV}\in \prod_{v\in \rV}\GL_{n_v}(\C)$ and edge $e$, by 
\[(j\cdot U)_e= j_{s(e)} U_e j_{t(e)}^{-1}\in M_{n_u,n_v}(\C)\,.\]

We are interested in functionals of the quiver representation which are invariant under automorphisms. For reasons inherent to the problems being modelled (see examples in~§\ref{sec:intro-examples}), one often wants to preserve, in the computation of any ``gauge-invariant observable quantity'' (that is, a functional) of this graphical data, which breaks it down to a sum of simpler terms, the gauge-invariance of the summands.

\subsubsection{Laplacian determinant}

A fundamental such observable is the determinant of $\Delta$, or the determinant of any principal block-minor $\Delta$, which we can view as a polynomial function of the edge-weights $\ul{x}$ and as a gauge-invariant function of the matrices $\ul{U}$. As noted above, if the quiver is acyclic, any representation will give a Laplacian with zero determinant.

For that matter, we will take an ``invariant'' approach to the computation of the determinant of block matrices, see Proposition \ref{prop:main}, which takes a particularly nice form when the diagonal blocks are scalar, see Theorem \ref{thm:main}, and its specialization to Corollary \ref{coro:detDelta}. Combined with Zeilberger's approach to matrix algebra \cite{Zeilberger} as used in~\cite{KL2}, we also obtain a second combinatorial interpretation in Theorem \ref{thm:formanN}. In passing, we give in Theorem \ref{thm:euler-product} another formula for $\det \Delta$, of the Euler infinite-product type, which is reminiscent of another formula of Forman on discrete dynamical systems and their zeta functions \cite[Theorem 7.5]{Forman-zeta}.

\subsection{Main results (I): abstract block matrices}

Here is a quick overview of our main results and how they sequentially unfold.

\subsubsection{General block matrices}

The starting point is a formula for the determinant of an $n\times n$ matrix 
\begin{equation}\label{eq:Ablocks-intro}
A=\begin{pmatrix} A_{[11]} &  \cdots & A_{[1p]} \\
\vdots & & \vdots \\
A_{[p1]} & \cdots & A_{[pp]}\end{pmatrix}\, ,
\end{equation}
which is written in block form as $A=\big(A_{[a_i a_j]}\big)_{1\le a_i,a_j\le p}$ with blocks of dimension $n_{a_i}\times n_{a_j}$, for integers $n_i\ge 1$ such that $\sum_{a=1}^p n_a=n$.

\begin{proposition}[Proposition \ref{prop:main}]
The following equality holds:
\begin{equation}\label{eq:dettrblock-intro}
\det A=\frac{1}{n_{1}!\ldots n_{p}!}\sum_{\sigma \in \S_{n}} \epsilon(\sigma) \prod_{(i_{1} \cdots i_{r})\preccurlyeq \sigma} 
\Tr\left(A_{[\bl(i_{1})\bl(i_{2})]} \ldots A_{[\bl(i_{r})\bl(i_{1})]}\right)
\end{equation}
where the product is over cycles of $\sigma$, and $\bl:\set{n}\to\set{p}$ is the map which to an index associates the index of the block to which it belongs.
\end{proposition}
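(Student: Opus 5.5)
Write $\set{n}=\bigsqcup_{a=1}^p B_a$, where $B_a=\bl^{-1}(a)$ has $n_a$ elements, and index the entries of the block $A_{[ab]}$ by $B_a\times B_b$. The plan is to expand each trace in \eqref{eq:dettrblock-intro} into a sum over index choices, and then to show by an averaging argument that the right-hand side reduces to the Leibniz formula for $\det A$.

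\emph{Step 1: expanding the traces.} For a cycle $(i_1\cdots i_r)$ of $\sigma$, we have
\[
\Tr\big(A_{[\bl(i_1)\bl(i_2)]}\cdots A_{[\bl(i_r)\bl(i_1)]}\big)=\sum_{k_1,\dots,k_r} A_{k_1k_2}A_{k_2k_3}\cdots A_{k_rk_1},
\]
where each $k_s$ ranges over $B_{\bl(i_s)}$. Taking the product over all cycles, these choices assemble into a map $\phi:\set{n}\to\set{n}$ with $\phi(B_a)\subseteq B_a$ for every $a$, setting $\phi(i_s)=k_s$. This gives
\[
\prod_{(i_1\cdots i_r)\preccurlyeq\sigma}\Tr(\cdots)=\sum_{\phi}\prod_{i=1}^n A_{\phi(i)\,\phi(\sigma(i))}.
\]
So the right-hand side is $\frac{1}{\prod_a n_a!}\sum_{\sigma}\sum_\phi \epsilon(\sigma)\prod_i A_{\phi(i)\phi(\sigma(i))}$, where $\phi$ runs over the $\prod_a n_a^{n_a}$ block-preserving maps.

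\emph{Step 2: bijective $\phi$.} Suppose $\phi$ is a bijection, so $\phi\in H:=\prod_a\S_{B_a}$. Substituting $j=\phi(i)$, the product becomes $\prod_j A_{j,\,\phi\sigma\phi^{-1}(j)}$, and $\epsilon(\phi\sigma\phi^{-1})=\epsilon(\sigma)$. Summing over $\sigma$ for fixed $\phi$ therefore gives exactly the Leibniz expansion of $\det A$. Since $|H|=\prod_a n_a!$, these terms contribute precisely $\det A$ after the normalisation.

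\emph{Step 3: non-injective $\phi$.} This is the main obstacle. When $\phi$ is not injective, there are two distinct indices $i\neq i'$ in the same block $B_a$ with $\phi(i)=\phi(i')$. I would use a sign-reversing involution on $\sigma$: fix a canonical choice of such a pair, which depends only on $\phi$, and send $\sigma$ to $\sigma'=\sigma\circ(i\,i')$. Then $\epsilon(\sigma')=-\epsilon(\sigma)$. The multiset of factors $\{A_{\phi(k)\phi(\sigma(k))}\}$ is unchanged, because only the factors for $k=i,i'$ change: they become $A_{\phi(i)\phi(\sigma(i'))}$ and $A_{\phi(i')\phi(\sigma(i))}$, and these are the original two factors swapped since $\phi(i)=\phi(i')$. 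The map $\sigma\mapsto\sigma'$ is a fixed-point-free involution on $\S_n$, so for each such $\phi$ the sum over $\sigma$ vanishes. This is the classical proof that a matrix with two equal rows has zero determinant, applied to the matrix $(A_{\phi(k)\phi(l)})_{k,l}$.

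Combining the three steps proves the formula. Equivalently, and more conceptually, the right-hand side is the determinant of the $n\times n$ matrix $(A_{\phi(k)\phi(l)})_{k,l}$ summed over block-preserving $\phi$ and divided by $\prod_a n_a!$. Only bijective $\phi$ contribute, and each of those contributes $\det A$.
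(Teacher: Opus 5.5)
Your proof is correct and is essentially the paper's argument run in reverse. The paper writes $\det A$ as the average of $\det A_{\alpha}$, with $(A_\alpha)_{ij}=A_{\alpha(i)\alpha(j)}$, over block-preserving maps $\alpha$: non-bijective $\alpha$ give zero because of repeated rows, and bijective ones give $\det A$ by conjugation. It then expands each $\det A_\alpha$ and pushes the sum over $\alpha$ inside the product over cycles to obtain the traces. Your Steps 2 and 3 are exactly these two observations, with the vanishing made explicit by the involution $\sigma\mapsto\sigma\circ(i\,i')$, and your closing remark is precisely the paper's formulation.
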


Recognizing in the right-hand side of \eqref{eq:dettrblock-intro} a special case of the tau-determinant introduced in~\cite[Eq. (1)]{KL2}, namely the trace-determinant, we may reformulate the previous proposition as follows.

\begin{corollary}[Corollary \ref{coro:main}]
The following equality holds:
\begin{equation}
  n_1!\ldots n_p! \, \det A= {\det}_{\Tr}(A^{\carre})\,,
\end{equation}
where $A^{\carre}$ is an $n\times n$ matrix which also has a block-structure such that block $(a_i,a_j)$ is the matrix with entries all equal to the same non-commutative coefficient $A_{[a_ia_j]}$. 
\end{corollary}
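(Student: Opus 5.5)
The corollary is a reformulation of Proposition \ref{prop:main}, so I would prove it by unfolding the definition of the tau-determinant from \cite[Eq.~(1)]{KL2}, specialised to $\tau=\Tr$, and comparing term by term with \eqref{eq:dettrblock-intro}. Recall that for an $n\times n$ matrix $M=(M_{ij})$ with entries in a (possibly non-commutative) algebra $\mathcal{A}$ equipped with a trace-like functional $\tau$, one sets
\[
{\det}_\tau(M)=\sum_{\sigma\in\S_n}\epsilon(\sigma)\prod_{(i_1\cdots i_r)\preccurlyeq\sigma}\tau\big(M_{i_1i_2}M_{i_2i_3}\cdots M_{i_ri_1}\big).
\]
Each factor is well defined, independently of the starting point chosen in the cycle, because $\tau$ is cyclic. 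The product over cycles is a product of scalars, so it is commutative. Here $\mathcal A$ is the space of all rectangular complex matrices, with a product defined only when sizes are compatible, and $\tau=\Tr$ is applied to square products.

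The key step is to identify the entries of $A^{\carre}$. By definition, $(A^{\carre})_{ij}=A_{[\bl(i)\bl(j)]}$ for all $i,j\in\set{n}$. So for every cycle $(i_1\cdots i_r)$ of $\sigma$ we get
\[
\Tr\big((A^{\carre})_{i_1i_2}\cdots(A^{\carre})_{i_ri_1}\big)=\Tr\big(A_{[\bl(i_1)\bl(i_2)]}\cdots A_{[\bl(i_r)\bl(i_1)]}\big).
\]
I would check here that the product is meaningful: the block $A_{[\bl(i_k)\bl(i_{k+1})]}$ has size $n_{\bl(i_k)}\times n_{\bl(i_{k+1})}$. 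Consecutive factors therefore have compatible sizes, and the product around the cycle is a square matrix of size $n_{\bl(i_1)}$, so its trace is defined. Cyclicity of the trace for products of rectangular matrices ($\Tr(XY)=\Tr(YX)$) then shows that the expression depends only on the cycle and not on the chosen starting index.

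Summing over $\sigma$ with signs, ${\det}_{\Tr}(A^{\carre})$ is exactly the sum on the right-hand side of \eqref{eq:dettrblock-intro}, without the prefactor $1/(n_1!\cdots n_p!)$. Proposition \ref{prop:main} then gives $n_1!\cdots n_p!\,\det A={\det}_{\Tr}(A^{\carre})$.

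There is no real obstacle once Proposition \ref{prop:main} is available. The only point that needs care is making sure the definition of ${\det}_\tau$ in \cite{KL2}, which is stated for a genuine algebra, applies to this setting where the product is only partially defined. I would handle this in one of two ways. One option is to embed all blocks into $M_N(\C)$ with $N=\max_a n_a$, padding with zeros; this preserves products and traces. The other is to note that only size-compatible products along cycles ever appear, so the definition makes sense verbatim.
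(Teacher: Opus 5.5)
Your proposal is correct and follows the paper's argument: the paper presents this corollary as a direct restatement of Proposition~\ref{prop:main}, obtained by noting that $\Tr\big(A^{\carre}_{i_1i_2}\cdots A^{\carre}_{i_ri_1}\big)=\Tr\big(A_{[\bl(i_1)\bl(i_2)]}\cdots A_{[\bl(i_r)\bl(i_1)]}\big)$ for every cycle. Your worry about the partially defined product is settled in the paper by working in the ring $\bigoplus_{k,l\geq 1}M_{k,l}(\C)$, where products of incompatible sizes are declared zero and $\tau$ vanishes on non-square matrices; this is essentially your second option.
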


The interpretation of the above formula is that, in order to compute a determinant of a block matrix without breaking the gauge symmetry, it is enough to replace each block by a block of non commuting entries which are all equal (to the initial block itself), and then compute the trace-determinant (upon dividing by a global symmetry factor $n_1! \ldots n_p!$).

\subsubsection{Block matrices with scalar diagonal blocks}

We then specialize this result to the case where the diagonal blocks are scalar: assume that for each $a\in \set{p}$, the diagonal block $A_{[aa]}$ is of the form $z_{a}I_{n_{a}}$ for some scalar $z_{a}$. This allows to separate in the previous formula the contribution of fixed blocks and of the so-called cyclic walks, that is cycles on the complete graph on the vertex set $\set{p}$. 

Let us introduce some notations: in the following, we let $\ul{n}=(n_a)_{1\le a\le p}$, let $\CMSf$ be the collection of multisets of cyclic walks, and for such a multiset $\Cyc$, let $\Cyc$ be the product of factorials of multiplicities, and $\ul{v}(\Cyc)$ be the total amount of vertices visited by the cycles, counting multiplicity. For $\cyc\in \Cyc$, we also let $\val(c)$ be the valuation of the cycle, and $\Tr(A_\cyc)$ be the trace of the product of the blocks of $A$ multiplied along $\cyc$. Finally, we let $\ul{z}^{\ul{n}-\ul{v}(\Cyc)}=\prod_{a=1}^p z_a^{n_a-v_a(\Cyc)}$. With these notations introduced, the simplification of our formula reads as follows.

\begin{theorem}[Theorem \ref{thm:main}] 
Let $A$ be a matrix with entries in a field of characteristic $0$ and with the structure depicted in \eqref{eq:Ablocks-intro}.
Assume that for each $a\in \set{p}$, the diagonal block $A_{[aa]}$ is of the form $z_{a}I_{n_{a}}$ for some scalar $z_{a}$. Then, the following equality holds:
\begin{equation}
\det A= \sum_{\Cyc \in \CMSf} \frac{\ul{z}^{\ul{n}-\ul{v}(\Cyc)} }{\Cyc!} 
 \prod_{\cyc\in \Cyc} \frac{(-1)^{\vert c\vert-1}\Tr(A_\cyc)}{\val(\cyc)}\, ,
\label{eq:dettrblockdiag-intro}
\end{equation}
where the product over $\cyc \in \Cyc$ takes multiplicities into account.
\end{theorem}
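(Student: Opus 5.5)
The plan is to start from Proposition \ref{prop:main} and sort the permutations $\sigma\in\S_n$ according to the cyclic walks that their cycles induce on $\set{p}$. Given a cycle $(i_1\cdots i_r)$ of $\sigma$, the trace $\Tr(A_{[\bl(i_1)\bl(i_2)]}\cdots A_{[\bl(i_r)\bl(i_1)]})$ only depends on the cyclic word $\bl(i_1)\cdots\bl(i_r)$. Whenever two consecutive letters coincide, the corresponding factor is the scalar $z_a I_{n_a}$, and it can be pulled out of the trace. So the first step is to contract every maximal run of equal consecutive block labels in the cyclic word. The result is either a cyclic walk $\cyc$ on the complete graph without loops, with the trace equal to $\ul{z}^{(\text{number of contracted steps})}\Tr(A_\cyc)$, or, when the whole cycle stays in one block $a$, the scalar $z_a^{r}\,n_a$. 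This rewrites each summand of \eqref{eq:dettrblock-intro} as a monomial in $\ul{z}$ times $\prod_{\cyc}\Tr(A_\cyc)$, where $\cyc$ ranges over the non-trivial induced walks.

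It is cleaner to avoid tracking the contractions inside a single huge sum. Instead I would use the generating-function route, which the abstract also suggests. Write $A=D+B$ with $D=\diag(z_aI_{n_a})$ and $B$ the off-diagonal block part. Assuming first that all $z_a\neq 0$ (the general case follows by polynomial identity, since characteristic $0$ makes everything a polynomial identity over $\mathbb Q$), we get $\det A=\det D\cdot\det(I+D^{-1}B)$. Then we use
\[
\det(I+D^{-1}B)=\exp\Big(\sum_{k\ge1}\frac{(-1)^{k-1}}{k}\Tr\big((D^{-1}B)^k\big)\Big),
\]
as a formal identity after scaling $B$ by a formal parameter $t$ and setting $t=1$ at the end, which is legitimate because the left side is a polynomial in $t$. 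Expanding $\Tr((D^{-1}B)^k)$ in blocks gives a sum over closed walks $a_1\to a_2\to\cdots\to a_k\to a_1$ on $\set{p}$ with $a_{j}\neq a_{j+1}$, each weighted by $\prod_j z_{a_j}^{-1}\,\Tr(A_{[a_1a_2]}\cdots A_{[a_ka_1]})$.

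Next, group the rooted closed walks into cyclic walks $\cyc$. Each cyclic walk of length $|\cyc|=k$ with period $k/\val(\cyc)$ comes from exactly $k/\val(\cyc)$ rootings, so $\frac1k\Tr((D^{-1}B)^k)$ becomes $\sum_{|\cyc|=k}\frac{\ul{z}^{-\ul{v}(\cyc)}\Tr(A_\cyc)}{\val(\cyc)}$; here I read $\val$ as the multiplicity, i.e.\ the number of times the primitive walk is repeated. Exponentiating a sum over the set of cyclic walks gives a sum over multisets $\Cyc$ with the factor $1/\Cyc!$, and the product over $\cyc\in\Cyc$ counts multiplicity. Multiplying by $\det D=\ul{z}^{\ul{n}}$ then yields $\ul{z}^{\ul{n}-\ul{v}(\Cyc)}$, together with the sign $(-1)^{|\cyc|-1}$ for each cycle.

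The main obstacle is the truncation to $\CMSf$. The exponential produces all multisets, but the determinant is a polynomial in which $z_a$ appears with nonnegative exponent only. I expect $\CMSf$ to mean the multisets with $\ul{v}(\Cyc)\le\ul{n}$ componentwise, so I must show that the total contribution of multisets with some $v_a(\Cyc)>n_a$ vanishes. To do this I would use Proposition \ref{prop:main} directly, or equivalently the Corollary's trace-determinant, in place of the exponential. That formula exhibits $\det A$ as a sum over $\sigma\in\S_n$, and the block-label structure forces each induced multiset of walks to visit block $a$ at most $n_a$ times, because distinct indices of $\sigma$ in block $a$ are used. Comparing coefficients of each monomial $\ul{z}^{\ul{m}}\prod\Tr(A_\cyc)$, with the $\Tr(A_\cyc)$ treated as formal, possibly non-commuting-entry variables (a generic-block argument), then shows that the terms outside $\CMSf$ cancel. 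The alternative is to verify the coefficient match directly by counting: the number of $\sigma$ realizing a given $\Cyc$ after contraction, times $1/\prod n_a!$, should equal $1/(\Cyc!\prod\val(\cyc))$, and the signs $\epsilon(\sigma)$ should combine to $\prod(-1)^{|\cyc|-1}$. I would do this count as the decisive step.
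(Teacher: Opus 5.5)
\textbf{There is a genuine gap, at the truncation to $\CMSf$.} Your main line matches the paper's second proof and is fine up to that point: factor $\det A=\ul z^{\ul n}\det(I+D^{-1}B)$, expand $\exp\Tr\log$, regroup rooted closed walks into cyclic walks with weight $1/\val(\cyc)$, and exponentiate into multisets with weight $1/\Cyc!$. The truncation is the step you single out, and the fix you propose does not work.

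Your parameter $t$ only controls total length. Setting $t=1$ therefore leaves every multiset with $\sum_a v_a(\Cyc)\le n$. Those with some $v_a(\Cyc)>n_a$ do not cancel formally. Take $p=2$ and $\ul n=(1,3)$. The multisets $\{\cy 1,2\yc,\cy 1,2\yc\}$ and $\{\cy 1,2,1,2\yc\}$ have total length $4=n$ but $v_1=2>n_1$. They contribute $\tfrac12 z_1^{-1}z_2\,\Tr(A_{[12]}A_{[21]})^2$ and $-\tfrac12 z_1^{-1}z_2\,\Tr\big((A_{[12]}A_{[21]})^2\big)$. These cancel only because $A_{[12]}A_{[21]}$ is $1\times 1$. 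As formal trace symbols they are distinct monomials with nonzero coefficients, while Proposition~\ref{prop:main} produces no such monomial. So ``comparing coefficients of $\ul z^{\ul m}\prod\Tr(A_\cyc)$ with the traces formal'' would force a false conclusion. Traces of different cyclic walks are not independent, and the cancellation really uses relations imposed by the block sizes.

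Your fallback, a direct count of the $\sigma\in\S_n$ realizing $\Cyc$, is not a plain count either. Each $\sigma$ carries the weight $\epsilon(\sigma)\prod_a n_a^{\ell_a(\sigma)}$, one factor $n_a$ for each cycle staying in block $a$. Showing these signed weights collapse to $\prod_a n_a!/(\Cyc!\prod_{\cyc}\val(\cyc))$ with the right sign is the whole of the paper's first proof: the insertion argument giving $(-1)^{n+|R|}\prod_a(n_a-|R|_a)!$, then the orbit--stabiliser count of Lemma~\ref{lem:counting}. You leave this undone.

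\textbf{The missing idea is a one-line degree argument, using the finer grading by $z_1^{-1},\dots,z_p^{-1}$ instead of $t$.}
\begin{enumerate}
\item Treat the $z_a$ as indeterminates, which is allowed over an infinite field, and read everything as formal power series in $z_1^{-1},\dots,z_p^{-1}$.
\item $\Tr(A_\cyc)$ involves only off-diagonal blocks, so the term of $\Cyc$ is homogeneous of multidegree $\ul v(\Cyc)$ in the $z_a^{-1}$. Hence the coefficient of $\ul z^{-\ul m}$ on the right is the sum over all $\Cyc$ with $\ul v(\Cyc)=\ul m$.
\item On the left, $\det(I+D^{-1}B)=\ul z^{-\ul n}\det A$. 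Since $z_a$ occurs in only $n_a$ entries, $\det A$ has degree at most $n_a$ in $z_a$. So no monomial $\ul z^{-\ul m}$ with some $m_a>n_a$ appears.
\item Each group of multisets outside $\CMSf$ therefore sums to zero.
\end{enumerate}
This is exactly how the paper concludes.
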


In Section \ref{sec:alternative-proof}, we give another proof of the previous theorem. This alternative proof further leads to the following identity.

\begin{proposition}[Proposition \ref{prop:Euler}]
With the notation of Theorem \ref{thm:main}, the following identity holds, with $\cP$ denoting the set of prime cycles on the complete graph on $\set{p}$:
\[\det A= \ul{z}^{\ul{n}} \prod_{\pcyc\in \cP} \det\big(I-\ul{z}^{-\ul{v}(\pcyc)}(-A)_{\pcyc})\big)\]
as an equality of Laurent series in $z_{1}^{-1},\ldots,z_{p}^{-1}$.
\end{proposition}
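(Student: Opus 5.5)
We prove the Euler-product identity by writing $A$ as a scalar diagonal part plus a hollow part and taking logarithms. Let $Z$ be the block-diagonal matrix with blocks $z_aI_{n_a}$, and write $A=Z+B$, where $B$ has zero diagonal blocks and off-diagonal blocks $B_{[ab]}=A_{[ab]}$. Then
\[\det A=\det Z\,\det(I+Z^{-1}B)=\ul{z}^{\ul{n}}\det\big(I-Z^{-1}(-B)\big).\]
We work in the ring of formal power series in $z_1^{-1},\dots,z_p^{-1}$. Since $Z^{-1}B$ has entries of positive degree in these variables, the identity $\log\det(I-M)=-\sum_{k\ge1}\Tr(M^k)/k$ holds there. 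The sum over $k$ converges formally: only finitely many terms contribute to each monomial, because a product of $k$ factors has total degree at least $k$.

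Next we expand $\Tr\big((Z^{-1}(-B))^k\big)$ in blocks. Because $B$ has zero diagonal blocks, this trace is a sum over closed walks $a_1\to a_2\to\cdots\to a_k\to a_1$ on the complete graph on $\set{p}$ with no loops, i.e.\ consecutive vertices distinct, cyclically. The term for such a walk is
\[\ul{z}^{-\ul{v}}\,\Tr\big((-A)_{[a_1a_2]}\cdots(-A)_{[a_ka_1]}\big),\]
where $\ul{v}$ counts visits to each vertex. So $\sum_k \Tr(M^k)/k$ becomes a sum over rooted closed walks, each weighted by $1/(\text{length})$.

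We then group rooted walks into cyclic walks. A cyclic walk $\cyc$ of length $k$ and period $k/\val(\cyc)$ has exactly $k/\val(\cyc)$ distinct rootings. Hence
\[\sum_k \tfrac1k\Tr(M^k)=\sum_{\cyc}\frac{1}{\val(\cyc)}\,\ul{z}^{-\ul{v}(\cyc)}\Tr\big((-A)_\cyc\big),\]
where $\val(\cyc)$ is the number of times $\cyc$ repeats a primitive cycle. Every cyclic walk is uniquely $\pcyc^m$ for a prime cycle $\pcyc\in\cP$ and $m=\val(\cyc)\ge1$, and $(-A)_{\pcyc^m}=((-A)_{\pcyc})^m$ and $\ul{v}(\pcyc^m)=m\,\ul{v}(\pcyc)$. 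The sum over $m$ therefore gives
\[-\log\det\Big(I-\ul{z}^{-\ul{v}(\pcyc)}(-A)_{\pcyc}\Big),\]
where $(-A)_\pcyc$ is viewed as a square matrix of size $n_{a}$, for $a$ a chosen starting vertex of $\pcyc$. The determinant does not depend on that choice, since $\det(I-XY)=\det(I-YX)$ makes cyclic rotations of the product irrelevant.

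Exponentiating gives the product over $\cP$, with the signs matching the displayed formula. The main obstacle is the bookkeeping of multiplicities in this grouping step, namely that $1/k$ times the $k/\val(\cyc)$ rootings gives exactly $1/\val(\cyc)$. We must also make sure the infinite product converges as a Laurent series: each prime cycle has $|\ul{v}(\pcyc)|\ge2$, and only finitely many prime cycles have bounded length, so exponentiating the formal logarithm is legitimate.

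As a consistency check, expanding each $\det(I-\cdot)$ back into power sums should reproduce the cycle-multiset formula of Theorem~\ref{thm:main}. This is presumably how the alternative proof in Section~\ref{sec:alternative-proof} is organised.
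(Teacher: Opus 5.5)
Your proposal is correct and follows essentially the same route as the paper. Both factor $\det A=\ul{z}^{\ul{n}}\det(I+Z^{-1}B)$ and expand $\det=\exp\Tr\log$ into a sum over cyclic walks weighted by $1/\val(\cyc)$, which is the paper's identity \eqref{eq:expW}; both then write each cyclic walk uniquely as $\pcyc^m$ to recognize each prime factor as $\det\big(I-\ul{z}^{-\ul{v}(\pcyc)}\hol(-A,\pcyc)\big)$. Your remarks on the formal convergence of the infinite product, via finitely many prime cyclic walks of each length, match the paper's own comment after the proposition.
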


\subsection{Main results (II): the case of the Laplacian of a quiver representation}

We then specialize the above results to the case of a Laplacian $\Delta$ of a quiver representation as defined in \eqref{eq:covLaplacian} (which specializes to a graph with vector bundle and unitary connection, as explained above). We assume the vertex set $\rV$ to be $\set{p}$. 

For simplicity, we assume that the graph $\rG$ has no self-loops, that is edges $e$ such that~$s(e)=t(e)$.
In that case, the matrix of $\Delta$ in any basis adapted to the block decomposition $\C^n=\bigoplus_{a=1}^p \C^{n_p}$ has scalar diagonal blocks which we will still denote $z_a$ and which are related to the weight $x_{e}$ by
\begin{equation}\label{eq:defzu}
    z_a=\sum_{\substack{e\in \rE:\\s(e)=a}}x_{e}\,.
\end{equation}
Given a cycle $\cyc$ of $\rG$, we will denote $\hol(\cyc)$ the holonomy along a cycle $\cyc$ with respect to the quiver representation~$\ul{U}$, and $\ul{x}^{\ul{e}(\cyc)}$ the product of edges weights $x_e$ along edges $e$ of $\cyc$ (with multiplicity).

Furthermore, we let $\Cyc(\rG)_{\le \ul{n}}$ be the set of multisets of cycles on $\rG$ such that the total number of visits at each vertex $a$, does not exceed $n_a$.

\subsubsection{Wilson loop expansion}

\begin{theorem}[Corollary \ref{coro:detDelta}]
We have 
\begin{equation*}
\det \Delta= \sum_{\Cyc \in\CMS(\rG)_{\leq \ul{n}}} \frac{\ul{z}^{\ul{n}-\ul{v}(\Cyc)} }{\Cyc!} 
 \prod_{\cyc\in \Cyc} \frac{-\ul{x}^{\ul{e}(\cyc)}\Tr \hol(\cyc)}{\val(\cyc)}\,.
\end{equation*}
\end{theorem}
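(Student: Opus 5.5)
The plan is to obtain the statement as a direct specialization of Theorem~\ref{thm:main} to the matrix $A=\Delta$. The first step is to write $\Delta$ in a basis adapted to the decomposition $\bigoplus_{a\in\rV}\C_a^{n_a}$ and identify its blocks. By \eqref{eq:covLaplacian}, and because $\rG$ has no self-loops, the diagonal block at $a$ is $z_aI_{n_a}$ with $z_a$ as in \eqref{eq:defzu}. For $a\neq b$, the off-diagonal block is
\[
\Delta_{[ab]}=-\sum_{e:\,s(e)=a,\,t(e)=b}x_eU_e .
\]
So the hypothesis of Theorem~\ref{thm:main} holds, and $\det\Delta$ equals the sum over multisets $\Cyc\in\CMSf$ of cyclic walks on the complete graph on $\set{p}$, with summand
\[
\frac{\ul{z}^{\ul{n}-\ul{v}(\Cyc)}}{\Cyc!}\prod_{\cyc}\frac{(-1)^{|\cyc|-1}\Tr(\Delta_\cyc)}{\val(\cyc)} .
\]

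Next I expand each factor along the edges of $\rG$. For a cyclic walk $\cyc=(a_1\cdots a_r)$ on the complete graph, consecutive vertices are distinct, so multilinearity gives
\[
\Tr(\Delta_\cyc)=(-1)^{r}\sum_{e_1,\dots,e_r} x_{e_1}\cdots x_{e_r}\Tr(U_{e_1}\cdots U_{e_r}),
\]
where each $e_i$ runs over edges from $a_i$ to $a_{i+1}$ (indices mod $r$). Each choice $(e_1,\dots,e_r)$ is a cycle $\tilde\cyc$ of $\rG$ lying over $\cyc$, and its summand is $\ul{x}^{\ul{e}(\tilde\cyc)}\Tr\hol(\tilde\cyc)$. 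Combining signs, $(-1)^{r-1}(-1)^r=-1$, which gives exactly the factor $-\ul{x}^{\ul{e}(\tilde\cyc)}\Tr\hol(\tilde\cyc)$ in the statement. Cyclic rotation of $(e_1,\dots,e_r)$ leaves the trace invariant, so the terms are well defined on cyclic classes.

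The main obstacle is the combinatorial bookkeeping: after expanding products over multisets, I must reorganize the sum over pairs $\big(\Cyc,\text{choice of lifts}\big)$ as a sum over $\Cyc'\in\CMS(\rG)_{\le\ul{n}}$, with the weights $1/\Cyc!$ and $1/\val$ transforming correctly. The cleanest route is to avoid this by rewriting Theorem~\ref{thm:main} in exponential form. Using $\sum_{\Cyc}\frac{1}{\Cyc!}\prod_{\cyc\in\Cyc}w(\cyc)=\exp\big(\sum_{\cyc}w(\cyc)\big)$, where the sum runs over cyclic walks and one extracts coefficients of monomials $\ul{z}^{-\ul v}$ with $\ul v\le\ul n$, the formula becomes
\[
\det\Delta=\Big[\ul{z}^{\ul{n}}\exp\Big(\sum_{\cyc}\frac{(-1)^{|\cyc|-1}\Tr(\Delta_\cyc)\ul{z}^{-\ul v(\cyc)}}{\val(\cyc)}\Big)\Big]_{\text{polynomial truncation}} .
\]
The truncation is legitimate because terms with $v_a>n_a$ carry a negative power of $z_a$, and these must be discarded, consistently with the range $\Cyc\in\CMSf$. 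Alternatively, one can treat the $z_a$ as formal variables independent of the $x_e$.

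Inside the exponential, each term $\Tr(\Delta_\cyc)/\val(\cyc)$ must become a sum over cycles of $\rG$ projecting to $\cyc$, weighted by $1/\val(\tilde\cyc)$. This is the standard identity for cyclic sequences. Let $\cyc$ have length $r$ and period $r/\val(\cyc)$. Each cyclic class $\tilde\cyc$ over $\cyc$ contributes $\val(\cyc)/\val(\tilde\cyc)$ sequences $(e_1,\dots,e_r)$ starting at a fixed base point, since $\val(\tilde\cyc)$ divides $\val(\cyc)$ and the lifts form an orbit of size $\val(\cyc)/\val(\tilde\cyc)$ under rotation by the period of $\cyc$. Dividing by $\val(\cyc)$ then gives weight $1/\val(\tilde\cyc)$, as required. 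Re-expanding the exponential over multisets of cycles of $\rG$ produces the weights $1/\Cyc!$, and the visit counts $\ul v$ are unchanged under projection. This yields the stated formula.
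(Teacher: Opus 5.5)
Your proposal is correct and follows the paper's route: the paper specializes Theorem~\ref{thm:main} to $A=\Delta$, whose diagonal blocks are $z_aI_{n_a}$ because there are no self-loops, and then expands each $W(-\Delta,\cdot)$ along the edges of $\rG$ by applying Lemma~\ref{lem:forget} ``for each term of the product''. Your exponential re-summation and the orbit count $\val(\cyc)/\val(\tilde{\cyc})$ give the weighted identity $W(-\Delta,\cyc)/\val(\cyc)=\sum_{\tilde{\cyc}\mapsto\cyc}\ul{x}^{\ul{e}(\tilde{\cyc})}\Tr\hol(\tilde{\cyc})/\val(\tilde{\cyc})$, which supplies exactly the bookkeeping the paper leaves implicit; read literally over cyclic classes, the unweighted identity of Lemma~\ref{lem:forget} is off by these factors $\val(\cyc)/\val(\tilde{\cyc})$ when $\rG$ has multiple edges and $\val(\cyc)>1$.
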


A similar expression is given for the characteristic polynomial of $\Delta$ in Theorem \ref{thm:polcarDelta}. Corollary~\ref{coro:detDelta} gives a practical way to compute expectations, or moments, of $\det \Delta$ when, for fixed~$\ul{n}$ we are given a distribution on the linear representations of $\rG$: this happens in the context of lattice gauge theory for instance where the representation plays the role of a gauge field, and where $\det \Delta$ can be the partition function for a model of matter (either bosonic \cite{KL1} or fermionic~\cite{KL7}). In that case, the linearity of expectation lets us rewrite moments of the Laplacian determinant as a linear combination (with explicit combinatorial weight) of expectations of products of traces of random holonomies along cycles, called Wilson loops (see Theorem~\ref{thm:gauge-theory}). These latter variables are often amenable to calculations, say in two-dimensional Yang--Mills theory~\cite{Levy1, Levy2}.

\begin{corollary}[Theorem \ref{thm:gauge-theory}]
Consider any probability distribution on the quiver representation for fixed $\ul{n}$. Then, for any integer $k\ge 1$, we have
\begin{equation*}
    \bE \left[\left(\det\Delta\right)^k \right] = 
    \sum_{\Cyc_1 , \ldots, \Cyc_k \in \CMS(\rG)_{\le \ul{n}}}
    \underbrace{\left(\prod_{i=1}^k \bigg[\frac{\ul{z}^{\ul{n}-\ul{v}(\Cyc_i)} }{\Cyc_i!} 
 \prod_{\cyc_i\in \Cyc_i} \frac{-\ul{x}^{\ul{e}(\cyc_i)}}{\val(\cyc_i)}\bigg] \right)}_{\text{Combinatorial weight}} \underbrace{\bE \left[\prod_{i=1}^k \prod_{\cyc_i\in \Cyc_i}\Tr \hol(\cyc_i) \right]}_{\text{Wilson loop}}\,.
\end{equation*}
\end{corollary}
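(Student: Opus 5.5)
The plan is to obtain this statement directly from Corollary~\ref{coro:detDelta} by raising both sides to the $k$-th power and using linearity of expectation. The approach is not to expand $(\det\Delta)^k$ as the determinant of a bigger operator. Instead, we write $(\det\Delta)^k$ as a product of $k$ copies of the Wilson loop expansion, each indexed by its own multiset $\Cyc_i \in \CMS(\rG)_{\le \ul{n}}$.

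First, fix $\ul{n}$ and the weights $\ul{x}$. Since $\ul{n}$ and $\ul{x}$ are deterministic, the scalars $z_a$ defined in \eqref{eq:defzu} are deterministic as well. Only the matrices $\ul{U}$ are random. We then observe that $\CMS(\rG)_{\le \ul{n}}$ is a finite set. Indeed, each cycle in a multiset visits some vertex at least once, and the total number of visits at $a$ is bounded by $n_a$. Hence a multiset contains at most $\sum_a n_a$ cycles, each of length at most $\sum_a n_a$, and the graph is finite. So the sum in Corollary~\ref{coro:detDelta} is a finite sum. In each term, the factor $\frac{\ul{z}^{\ul{n}-\ul{v}(\Cyc)}}{\Cyc!}\prod_{\cyc\in\Cyc}\frac{-\ul{x}^{\ul{e}(\cyc)}}{\val(\cyc)}$ is deterministic, and the only random factor is $\prod_{\cyc\in\Cyc}\Tr\hol(\cyc)$.

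Next, we take the $k$-th power of the identity of Corollary~\ref{coro:detDelta}. We expand the product of $k$ finite sums as a sum over $k$-tuples $(\Cyc_1,\ldots,\Cyc_k)$. Each resulting term is the product of the $k$ deterministic combinatorial weights times $\prod_{i=1}^k\prod_{\cyc_i\in\Cyc_i}\Tr\hol(\cyc_i)$. Taking expectations and pulling the deterministic weights out of the finite sum by linearity gives the displayed formula.

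The only point requiring care is integrability: the expectation of each Wilson loop product should exist. Each $\Tr\hol(\cyc)$ is a polynomial in the entries of the $U_e$. So the statement holds as written whenever these moments are finite, for instance when the representation takes values in a compact set such as unitary connections. Otherwise both sides are to be read as formal or equally infinite. I would add this implicit moment assumption rather than treat it as a real obstacle. The substantive content lies entirely in Corollary~\ref{coro:detDelta}, which is assumed.
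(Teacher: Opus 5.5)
Your proposal is correct and matches the paper's argument: the paper likewise takes the $k$-th power of the finite expansion in Corollary~\ref{coro:detDelta} and uses linearity of expectation, pulling out the combinatorial weights, which depend only on the deterministic $\ul{x}$ and $\ul{n}$. Your finiteness check for $\CMS(\rG)_{\le \ul{n}}$ and your remark that the Wilson loop moments must be finite (automatic, for instance, for unitary representations) are sensible additions that the paper leaves implicit.
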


\subsubsection{Discrete vector fields}

Expanding the above monomial in $\ul{z}$ using \eqref{eq:defzu}, we obtain the following expression in terms of vector fields on the graph, which is a generalization of \cite[Theorem 1]{Forman}.

\begin{theorem}[Theorem \ref{thm:forman-zeilberger-vector-fields}]\label{thm:formanN-intro}
We have 
\begin{equation}
   \det \Delta= \frac{1}{{n_{1}!\cdots n_p !}}\sum_{\xi\in \Xi} \; \ul{x}^{\xi} \; \bigg[\sum_{\sigma \in \Sigma(\xi)} \prod_{a=1}^{p}  (n_{a}-v_{a}(\sigma))!
\prod_{\cyc \in C(\sigma)} -\Tr \hol(\cyc)\bigg]\,.
\end{equation}
\end{theorem}

\subsubsection{Prime cycles and Euler products}

Assume that $x_{e}\geq 0$ for each edge $e$. For each vertex $a$, let $\kappa_a \ge 0$ and set for each edge $e\in \rE$
\begin{equation}
    p_e=\frac{x_e}{z_{s(e)}+\kappa_{s(e)}}\,.
\end{equation}
Let $\cP(\rG)$ denotes the set of prime cycles on $\rG$, that is cycles of valuation $1$.

\begin{theorem}[Theorem \ref{thm:euler-product}]
Under the assumption that the chain $p$ on $\rG$ is sub-Markovian, and a uniform bound on the matrix norm of the holonomies of cycles of $\rG$, we have 
\begin{equation}\label{eq:twisted-zeta-intro}
    \det \big(\diag(\kappa_a:a\in\rV)+\Delta\big)= (\ul{z}+\ul{\kappa})^{\ul{n}} \prod_{\cyc\in \cP(\rG)} \det\big(I-\ul{p}^{-\ul{e}(\cyc)}\hol(\cyc))\big)\,.
\end{equation}
\end{theorem}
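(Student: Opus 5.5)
We derive this from the block-matrix Euler product (Proposition \ref{prop:Euler}), applied to the matrix $D+\Delta$ with $D=\diag(\kappa_a:a\in\rV)$. Since $\rG$ has no self-loops, the diagonal blocks of $D+\Delta$ are the scalars $(z_a+\kappa_a)I_{n_a}$. The off-diagonal block $(a,b)$ equals $-\sum_{e:\,s(e)=a,\,t(e)=b}x_eU_e$. Thus $-(D+\Delta)$ has off-diagonal blocks $\sum_{e:a\to b}x_eU_e$.

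Proposition \ref{prop:Euler} with $z_a$ replaced by $z_a+\kappa_a$ gives
\[\det(D+\Delta)=(\ul z+\ul\kappa)^{\ul n}\prod_{\pcyc\in\cP}\det\big(I-(\ul z+\ul\kappa)^{-\ul v(\pcyc)}(-(D+\Delta))_{\pcyc}\big),\]
where the product runs over prime cycles of the complete graph on $\set{p}$.

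The first step is to rewrite this combinatorially in terms of $\rG$. Expanding $(-(D+\Delta))_\pcyc$ as a product of sums over parallel edges turns it into a sum over lifts of $\pcyc$ to cycles $\cyc$ of $\rG$. Each lift contributes $\ul x^{\ul e(\cyc)}\hol(\cyc)$. Dividing by $(\ul z+\ul\kappa)^{\ul v(\cyc)}$, one factor $z_{s(e)}+\kappa_{s(e)}$ per edge, turns each term into $\ul p^{\ul e(\cyc)}\hol(\cyc)$. The formula should therefore read with $\ul p^{\ul e(\cyc)}$; the exponent $-\ul e$ in the statement looks like a typo inherited from the notation $\ul z^{-\ul v}$.

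The product of determinants is not directly a product over prime cycles of $\rG$, since distinct lifts do not split multiplicatively. To handle this, I would pass to logarithms. Proposition \ref{prop:Euler} is equivalent to
\[\log\det(D+\Delta)-\log(\ul z+\ul\kappa)^{\ul n}=-\sum_{\cyc}\frac{\Tr\big((\ul z+\ul\kappa)^{-\ul v(\cyc)}(-(D+\Delta))_\cyc\big)}{\val(\cyc)},\]
a sum over all cycles of the complete graph. Expanding each trace over lifts to $\rG$ gives $-\sum_{\cyc\in\Cyc(\rG)}\ul p^{\ul e(\cyc)}\Tr\hol(\cyc)/\val_\rG(\cyc)$. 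The point to check is that the weights $1/\val$ recombine correctly. A cycle of $\rG$ of valuation $m$ projects to a cycle of the complete graph whose valuation is a multiple of $m$. The count of rotations matches if we sum over rooted cycles and divide by length, so I would use that normalization: $\sum_{\cyc}1/\val(\cyc)$ over unrooted cycles equals $\sum_{\text{rooted}}1/|\cyc|$. Regrouping by prime cycles of $\rG$ and their powers, and using $\sum_k \Tr(M^k)/k=-\log\det(I-M)$, yields the Euler product over $\cP(\rG)$.

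The main obstacle is analytic, since the identity is stated as numbers rather than formal series. Sub-Markovianity, $\sum_{e:s(e)=a}p_e\le1$, together with the uniform bound $\|\hol(\cyc)\|\le K$, must make every series converge absolutely.

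Strict sub-Markovianity at some vertex reachable from every cycle is what I would need. With it, the matrix $P$ with entries $\sum_{e:a\to b}p_e$ has spectral radius $<1$. The number of weighted cycles of length $\ell$ is then bounded by $\Tr P^\ell\le C\rho^\ell$, so $\sum_\cyc \ul p^{\ul e(\cyc)}\,|\Tr\hol(\cyc)|$ converges. This justifies the rearrangements, the exponentiation, and the convergence of the infinite product.

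If only $\le1$ holds everywhere, I would first prove the result for $\kappa$ replaced by $\kappa+\epsilon$. In that case $\rho<1$ and both sides are analytic in $\epsilon>0$. I would then let $\epsilon\to0$, using continuity of the determinant on the left and the hypothesis guaranteeing convergence on the right. This limiting step is where I expect the precise hypotheses of the theorem to matter.
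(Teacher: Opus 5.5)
Your proposal is correct and is essentially the paper's argument. The proof of Theorem~\ref{thm:euler-product} writes $(\ul{z}+\ul{\kappa})^{-\ul{n}}\det\big(\diag(\kappa_a:a\in\rV)+\Delta\big)=\exp\Tr\log(I_n-P)$, expands $\Tr P^k$ over closed walks of $\rG$, regroups these into powers of prime cycles of $\rG$ and exponentiates, with convergence coming from the spectral radius of $P$ being less than $1$ under Assumptions~\ref{assumption} and~\ref{assumption2}; this is exactly your logarithmic step, your rooted-walk bookkeeping of the $1/\val$ weights and your convergence argument. Your detour through Proposition~\ref{prop:Euler} and formal series is harmless but avoidable by expanding $\log\det(I_n-P)$ numerically from the start; your reading $\ul{p}^{\ul{e}(\cyc)}$ of the exponent is the right one; and the closing $\epsilon\to0$ paragraph is unnecessary, since Assumption~\ref{assumption} is already the strict sub-Markovian condition you need.
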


Note that when the quiver has only a finite number of prime cycles, we can remove the assumption of sub-Markovianity. Indeed we can set $\kappa_a=0$ for all $a$, since we can exchange the limit and (finite) product in the right-hand side. 

\begin{corollary}[Theorem \ref{thm:finite-euler}]
If the quiver $\rG$ only has a finite number of prime cycles, we have 
\begin{equation}\label{eq:twisted-zeta-finite}
    \det \Delta= \ul{z}^{\ul{n}} \prod_{\cyc\in \cP(\rG)} \det\big(I-\ul{p}^{-\ul{e}(\cyc)}\hol(\cyc))\big)\,,
\end{equation}
where $p_e=x_e/z_{s(e)}$ for all $e\in \rE$.
\end{corollary}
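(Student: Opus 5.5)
The plan is to deduce the statement from Theorem~\ref{thm:euler-product} by letting the $\kappa_a$ tend to $0$. The key observation is that, when $\cP(\rG)$ is finite, both sides of \eqref{eq:twisted-zeta-intro} are rational functions of the parameters $\kappa=(\kappa_a)_{a\in\rV}$. We first dispose of a degenerate case. If some vertex $a$ has $z_a=0$ (for instance, no outgoing edge), then the $a$-th block row of $\Delta$ vanishes and $\det\Delta=0$. The right-hand side also contains the factor $z_a^{n_a}=0$, once the product over prime cycles through $a$ is read as the polynomial
\[z_a^{n_a}\det\Big(I-\prod_{e\in\cyc}\tfrac{x_e}{z_{s(e)}}\,\hol(\cyc)\Big),\]
that is, after clearing the denominators $z_{s(e)}$. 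So we may assume $z_a>0$ for all $a$, and we work with nonnegative weights as in Theorem~\ref{thm:euler-product}.

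\emph{Step 1: applying the theorem for large $\kappa$.} Fix $\kappa_a=t>0$ for all $a$. Then $\sum_{e:s(e)=a}p_e=z_a/(z_a+t)<1$, so the chain $p$ is strictly sub-Markovian. The second hypothesis of Theorem~\ref{thm:euler-product} is a bound on the holonomies of cycles. Every cycle of $\rG$ is a power $\pcyc^k$ of one of finitely many prime cycles, and $\|\hol(\pcyc^k)\|\le\|\hol(\pcyc)\|^k$. Hence the weighted holonomies $\ul p^{\,\ul e(\cyc)}\hol(\cyc)$ are bounded by $\big(C/(\min_a z_a+t)\big)^{|\cyc|}$ times a power of $\max_e x_e$. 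For $t$ large enough they are summable.

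I expect that this summability is exactly what the proof of Theorem~\ref{thm:euler-product} uses the norm hypothesis for, namely to make the series $\sum_{\Cyc}$ from Corollary~\ref{coro:detDelta} and the logarithmic expansion of the Euler product converge absolutely. So I would check in that proof that its hypothesis can be replaced by this weaker, $t$-dependent summability. This is the step I expect to be the main obstacle: making sure the cited theorem genuinely applies for all sufficiently large $t$, rather than only under a uniform bound that may fail here. If this is the case, \eqref{eq:twisted-zeta-intro} holds for all $t\ge t_0$.

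\emph{Step 2: analytic continuation to $t=0$.} The left-hand side $\det(tI+\Delta)$ is a polynomial in $t$. On the right-hand side, $(\ul z+t)^{\ul n}$ is a polynomial. Since $\cP(\rG)$ is finite, the product over prime cycles is a finite product of determinants whose entries are rational functions of $t$, with denominators products of $z_{s(e)}+t$. These are nonzero on $[0,\infty)$. Two rational functions that agree on $[t_0,\infty)$ agree identically, so we may set $t=0$. This gives $p_e=x_e/z_{s(e)}$ and yields \eqref{eq:twisted-zeta-finite}.

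\emph{Fallback: a direct verification.} If Step~1 cannot be justified, I would use the structure of $\rG$ instead. A quiver with finitely many prime cycles has strongly connected components that are either single vertices or single simple cycles: two distinct prime cycles through a common vertex can be concatenated in infinitely many ways, producing infinitely many prime cycles. Ordering the components topologically makes $\Delta$ block triangular, so $\det\Delta$ factors over the components. A trivial component contributes $z_a^{n_a}$. A simple cycle $a_1\to\cdots\to a_r\to a_1$ with edges $e_1,\dots,e_r$ gives a block matrix with scalar diagonal blocks $z_{a_i}$ and one off-diagonal block $-x_{e_i}U_{e_i}$ in each block row. Eliminating the blocks successively by Schur complements gives
\[\prod_i z_{a_i}^{n_{a_i}}\det\Big(I-\prod_i\tfrac{x_{e_i}}{z_{a_i}}\,U_{e_1}\cdots U_{e_r}\Big),\]
which is exactly that component's factor in \eqref{eq:twisted-zeta-finite}.
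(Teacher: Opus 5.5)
Your main route is the paper's own. The paper also obtains Theorem~\ref{thm:finite-euler} from Theorem~\ref{thm:euler-product} by removing $\kappa$, using that the product over $\cP(\rG)$ is finite so the limit can be taken factor by factor. The difference is in how you remove $\kappa$. The paper lets $\kappa_a\to 0$ from small positive values, and read literally this inherits Assumption~\ref{assumption2} ($\|U_e\|\le 1$), even though the statement is for arbitrary representations. You instead take $\kappa_a=t$ large and continue the resulting rational identity in $t$ down to $t=0$.

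The step you flagged as the main obstacle does go through. The proof of Theorem~\ref{thm:euler-product} only uses that $P$ has spectral radius $<1$. This lets one write $\det(I-P)=\exp\Tr\log(I-P)$ and, $\cP(\rG)$ being finite, split $\sum_k \Tr(P^k)/k$ into finitely many convergent series $\sum_m$ over powers of prime cycles. For $t$ large, $\|P\|=O(1/t)$ regardless of the size of the $U_e$. The series you need to control is this logarithmic one, not the sum in Corollary~\ref{coro:detDelta}, which is finite. So your version is slightly more robust than the paper's one-line argument. The latter could be repaired equally well by noting that both sides are polynomial in the entries of the $U_e$.

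Your fallback is a genuinely different, purely algebraic proof, and it is also correct. Strongly connected components are single vertices or simple cycles; the paper only asserts this informally in Section~\ref{sec:example-finite}. This makes $\Delta$ block triangular, and the block-cyclic identity $\det(I-B)=\det(I-B_{12}B_{23}\cdots B_{r1})$ finishes each cycle component. This route avoids all analysis, works for complex weights with $z_a\neq 0$, and explains directly why the right-hand side is a polynomial. Its only cost is the structural lemma, which you should spell out. A nontrivial component that is not a simple cycle has a vertex $v$ with two outgoing edges inside it. This gives two distinct closed walks $w_1\neq w_2$ visiting $v$ only at their endpoints. These produce the infinitely many distinct prime cycles $w_1^k w_2$, $k\geq 1$.
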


Interestingly, the product in the right-hand side of \eqref{eq:twisted-zeta-intro} is reminiscent of the twisted Selberg's zeta function (see \cite[Eq. (4.4)]{Ray-Singer}). The fact that there is one infinite product less is due to the fact that we do not consider geodesic prime cycles, but all prime cycles (a geodesic cycle would be a non-backtracking cycle in our setup); the Ihara zeta function is the one appearing for those cycles.

\subsection{Background}

\subsubsection{Occurrence of the framework}\label{sec:intro-examples}

Apart from the theory of quiver representations per se, such a graphical setup arises in different contexts, notably in discrete differential geometry or lattice gauge theory, where the edge-weights are related to a distance or an interaction strength. When there is an integer $N$ such that~$n_v=N$ for all $v\in V$, the matrices model parallel transport, or a gauge interaction field on a rank $N$ vector bundle; see for instance~\cite{KL1,KL7} and references therein. These decorated graphs also appear in theoretical statistics, for instance in group synchronisation problems~\cite{Bandeira-Singer-Spielman}, or in vector diffusion maps for manifold learning and dimensional reduction~\cite{Singer-Wu}; see also~\cite{Gao-Brodzki-Mukherjee}. The case of non-square matrices (that is, when not all $n_v$ are equal) has also been used in applications, see for example \cite{Seigal} and \cite{Sumray-Harrington-Nanda}. 
It also occurs in a theory of sheaf on graphs introduced in~\cite{Friedman}.

\subsubsection{Computing determinants of block matrices}

Computing the determinant of a block matrix in terms of its blocks, without mixing their entries directly, is a natural problem which is likely to have been addressed many times in the literature. In particular, methods are known to compute this determinant by using repeatedly the Schur complement formula or variants of it. In this paper, we take a different approach to this computation, more in the spirit of `symbolic dynamics', and closely related to works on (weighted) zeta functions of graphs by Ihara, Bass, Foata, Zeilberger, Sato, and others, using cycles on graphs as combinatorial words.

\subsubsection{Special case}

Assume the quiver is a bidirected graph in the sense defined in Subsection~\ref{sec:graphvsquivers}. 
For a choice of symmetric edge-weights (that is, $x_e=x_{e^{-1}}$ for all $e\in \rE$), constant $n_v=N$, and unitary and orientation reversing matrices (that is, $U_e\in U_{n}(\C)$ and $U_{e^{-1}}=U_e^{-1}$ for all $e\in \rE$), certain powers of this determinant are shown to be the partition function of some probability distributions on functions over the vertices~\cite{KL1}, and Grassmannian-valued functions over the edges~\cite{KL3,KL7}. 

The computation of $\det \Delta$ in the case where $n_v=N$ for all $v\in V$ has been studied already. When $N=1$, a formula for $\det \Delta$, providing an interpretation of this determinant as a sum over discrete vector fields of the holonomy of limit-cycles of the induced finite dynamical system, was obtained by Forman, generalizing earlier work of Chaiken. This formula was extended by Kenyon to the case of $N=2$ with matrices in $\SL_2(\C)$; see \cite{KL2} and references therein. A natural question asked in \cite[Section 14, Open question 2]{Kenyon} is to find a combinatorial interpretation of this determinant when~$N\ge 2$ in general. Such combinatorial interpretations have been proposed in \cite{KL2,KL7}. The goal of this note is to propose yet another one, see Theorem~\ref{thm:formanN-intro}, which is simpler in a sense, and closer to the formula of Forman \cite{Forman}, and which moreover holds for general collections of integers~$\{n_v\ge 1, v\in \rV\}$.

\subsection{Organization of the paper}

In Section \ref{sec:block} we prove our main result relating the computation of a determinant of a block matrix to the computation of traces of products of its blocks. 
In Section \ref{sec:main} we specialize this formula to the case of scalar diagonal blocks.
In Section \ref{sec:alternative-proof} we provide a different proof of the result of the previous section. 
In Section \ref{sec:wilson-loops} we consider the special case of the twisted Laplacian of a finite quiver representation, thus obtaining a Wilson loop expansion for it.
In Section \ref{sec:vector-fields} we show how to rewrite our formulas in terms of ordered tuples of nonzero discrete vector fields on the quiver. 
In Section \ref{sec:euler-product} we give an Euler product expression for the twisted Laplacian.
In Section \ref{sec:examples} we compute a few explicit examples.
Section \ref{sec:conclusion} collects a few concluding remarks and questions for future research.
In Appendix \ref{sec:erratum}, we provide some clarification to \cite{KL2}.

\section{A formula for the determinant of a block matrix}\label{sec:block}

For each integer $m\ge 1$, we will use the notation $\set{m}=\{1,\ldots, m\}$, and we set $\set{0}=\varnothing$.

\subsection{A classical formula}

To start off, we are going to recall an expression of the determinant of a matrix as a polynomial function of the traces of the powers of this matrix. The formula that we give is very classical, and is an instance of the relations between Newton sums and elementary symmetric functions. The proof that we give, however, is not based on the use of symmetric functions, and we will then show how it extends to the case of block matrices.

Let us state the formula. We denote by $\S_{n}$ the group of permutations of $\set{n}$. If $\sigma$ is a permutation and $c=(i_{1}\, \cdots \, i_{r})$ is a cycle, we write $c\preccurlyeq \sigma$ to indicate that $c$ is a cycle of $\sigma$. We also denote by $|c|$ the length of $c$, which in this case is $r$. We denote by $\epsilon(\sigma)$ the signature of $\sigma$.

\begin{proposition}\label{prop:dettr1} Let $n\geq 1$ be an integer and let $A\in M_{n}(\C)$ be an $n\times n$ matrix with complex entries. The following equality holds:
\begin{equation}
\label{eq:dettr}
\det A=\frac{1}{n!}\sum_{\sigma\in \S_{n}} \epsilon(\sigma) \prod_{c \preccurlyeq \sigma} \Tr\big(A^{|c|}\big)\, .
\end{equation}
\end{proposition}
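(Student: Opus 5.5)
The plan is a direct combinatorial proof that avoids symmetric functions, so that it extends verbatim to block matrices later. Expand each trace of a power into matrix entries. For a cycle $c=(i_1\,\cdots\,i_r)$ of $\sigma$,
\[
\Tr(A^{r})=\sum_{j_1,\ldots,j_r\in\set{n}} A_{j_1j_2}A_{j_2j_3}\cdots A_{j_rj_1}.
\]
The natural bookkeeping is to label the positions of the cycle by the elements $i_1,\dots,i_r$ themselves. Doing this for every cycle of $\sigma$ gives
\[
\prod_{c\preccurlyeq\sigma}\Tr\big(A^{|c|}\big)=\sum_{f:\set{n}\to\set{n}}\ \prod_{i=1}^{n} A_{f(i)\,f(\sigma(i))}.
\]
Hence the right-hand side of \eqref{eq:dettr} equals
\[
\frac{1}{n!}\sum_{f}\sum_{\sigma}\epsilon(\sigma)\prod_{i} A_{f(i)f(\sigma(i))}.
\]

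Next, split the sum over $f$ according to whether $f$ is a bijection.

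\textbf{Non-injective $f$.} Suppose $f(k)=f(l)$ with $k\neq l$, and let $\tau=(k\,l)$. Consider the involution $\sigma\mapsto \sigma\tau$ on $\S_n$. Setting $\sigma'=\sigma\tau$, the factors for $i=k$ and $i=l$ in $\prod_i A_{f(i)f(\sigma'(i))}$ are $A_{f(k)f(\sigma(l))}$ and $A_{f(l)f(\sigma(k))}$. Since $f(k)=f(l)$, this is the same multiset of factors as for $\sigma$. So the products agree while the signatures are opposite, and the inner sum over $\sigma$ vanishes.

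\textbf{Bijective $f$.} Substitute $j=f(i)$. The product becomes $\prod_{j}A_{j\,(f\sigma f^{-1})(j)}$, and $\epsilon(f\sigma f^{-1})=\epsilon(\sigma)$. Summing over $\sigma$ therefore gives exactly $\sum_{\pi}\epsilon(\pi)\prod_j A_{j\pi(j)}=\det A$. This holds for each of the $n!$ bijections, so their total contribution is $n!\det A$.

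Dividing by $n!$ gives \eqref{eq:dettr}.

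The only delicate step is the first identity: one must check that labelling each cycle's summation indices by its support $\{i_1,\dots,i_r\}$ is a bijection onto all functions $f$, with no overcounting due to the choice of starting point of the cycle. This holds because the trace sum already ranges over all index tuples, and the cyclic rotation is absorbed by the fixed labelling $j_s=f(i_s)$.

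For the block version, the same argument should go through with $f$ replaced by maps that respect the block map $\bl$. The $n_1!\cdots n_p!$ bijections that preserve blocks then play the role of the $n!$ bijections above.
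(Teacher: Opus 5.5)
Your proof is correct and is essentially the paper's argument run in reverse. Your inner sum $\sum_{\sigma}\epsilon(\sigma)\prod_i A_{f(i)f(\sigma(i))}$ is exactly $\det A_f$ with $(A_f)_{ij}=A_{f(i)f(j)}$, and the paper starts from $\det A=\frac{1}{n!}\sum_{\alpha}\det A_\alpha$, where non-injective $\alpha$ contribute $0$ (repeated rows) and bijective $\alpha$ give $A_\alpha=PAP^{-1}$; it then expands to reach the traces. Your sign-reversing involution $\sigma\mapsto\sigma\tau$ and your conjugation reindexing unpack those two facts, and your block-preserving remark matches the paper's proof of the block version.
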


We already said that \eqref{eq:dettr} is a classical expression. To be explicit, it is a special case of the Frobenius relations between Schur functions and power sums (see \cite{MacDonald}): if $\lambda$ denotes the partition $(1,\ldots,1)$ of $n$, then the associated Schur function $s_{\lambda}$ is the elementary symmetric function~$e_{n}$, and the Frobenius relation, written as a sum over permutations rather than partitions, reads 
\[e_{n}=\frac{1}{n!}\sum_{\sigma\in \S_{n}} \epsilon(\sigma) p_{\sigma}\,,\] 
where $p_{\sigma}$ is the power sum (or Newton function) indexed by the partition corresponding to the cycle structure of the permutation $\sigma$.

Equation \eqref{eq:dettr} can also be obtained in another way, closer to some computations that we will do later in this note (in Section \ref{sec:alternative-proof}). This other way starts by expanding 
\begin{equation}\label{eq:exptrlog1}
\det(I_{n}+tA)=\exp \Tr \log(I_{n}+tA)
\end{equation}
in power series of $t$. For each integer $k\geq 0$, the coefficient of $t^{k}$ is a sum over $k$ integer indices that, to the price of combinatorial factors, can be ordered and interpreted as the lengths of the cycles of a permutation, to give
\[\det(I_{n}+tA)=\sum_{k\geq 0} \frac{t^{k}}{k!} \sum_{\sigma \in \S_{k}} \epsilon(\sigma) \prod_{c\preccurlyeq \sigma} \Tr(A^{|c|}).\]
The terms for $k> n$ vanish on the right-hand side, and the equality of the terms of degree $n$ yields our formula.

Let us now present another approach to the proof of Proposition~\ref{prop:dettr1}, which is of independent interest, and which we will then generalize to block matrices in Proposition~\ref{prop:main}. 

\begin{proof}[Proof of Proposition~\ref{prop:dettr1}] For every permutation $\alpha\in \S_{n}$, let us define the matrix $A_{\alpha}$ by setting
\begin{equation}\label{eq:defAa}
\forall i,j\in \set{n}, \ (A_{\alpha})_{ij}=A_{\alpha(i)\alpha(j)}\, .
\end{equation}
The matrix $A_{\alpha}$ is simply the matrix $PAP^{-1}$, where $P$ is the permutation matrix corresponding to $\alpha$ :
\[\forall i,j\in \set{n}, \ P_{ij}=\delta_{j,\alpha(i)}\, .\] 
In particular, $A_{\alpha}$ has the same determinant as $A$. 

Now, the definition \eqref{eq:defAa} makes sense for any map $\alpha:\set{n}\to \set{n}$, bijective or not. However, if~$\alpha$ is not bijective, it is not injective, and the matrix $A_{\alpha}$ has two identical columns (and indeed two identical rows), so that its determinant is $0$. Let us denote by $\Map_{n}$ the set of all maps from~$\set{n}$ to $\set{n}$. It results from these observations that
\[\det A=\frac{1}{n!}\sum_{\alpha \in \Map_{n}} \det A_{\alpha}.\]

Let us expand the determinant of $A_{\alpha}$ in the most usual way and push the sum over $\alpha$ as far to the right as we can. We find
\begin{align*}
\det A&=\frac{1}{n!}\sum_{\alpha \in \Map_{n}} \sum_{\sigma\in \S_{n}} \epsilon(\sigma) A_{\alpha(1)\alpha(\sigma(1))}\ldots A_{\alpha(n)\alpha(\sigma(n))}\\
&=\frac{1}{n!}\sum_{\sigma\in \S_{n}} \epsilon(\sigma)  \prod_{(i_{1}\, \cdots \, i_{r}) \preccurlyeq \sigma} \; \sum_{\alpha:\{i_{1},\ldots,i_{r}\}\to \set{n}} A_{\alpha(i_{1})\alpha(i_{2})}\ldots A_{\alpha(i_{r})\alpha(i_{1})}.
\end{align*}
Renaming, in the last sum, $\alpha(i_{1}),\ldots,\alpha(i_{r})$, which are now independent indices between $1$ and~$n$, as $j_{1},\ldots,j_{r}$, we find
\begin{align*}
\det A&=\frac{1}{n!}\sum_{\sigma\in \S_{n}} \epsilon(\sigma)  \prod_{(i_{1}\, \cdots\,  i_{r}) \preccurlyeq \sigma} \; \sum_{j_{1},\ldots,j_{r}=1}^{n} A_{j_{1}j_{2}}\ldots A_{j_{r}j_{1}}\\
&=\frac{1}{n!}\sum_{\sigma\in \S_{n}} \epsilon(\sigma)  \prod_{(i_{1}\, \cdots\,  i_{r}) \preccurlyeq \sigma} \Tr(A^{r}),
\end{align*}
which is the expected formula.
\end{proof}

Note that the argument shows that the equality \eqref{eq:dettr}, with both sides multiplied by $n!$, is valid for matrices with entries in an arbitrary commutative ring.

\subsection{A formula for block matrices} \label{sec:formblockmat}

We will now establish a formula similar to \eqref{eq:dettr} for a square matrix~$A$ that is given as a block matrix:
\begin{equation}\label{eq:Ablocks}
A=\begin{pmatrix} A_{[11]} &  \cdots & A_{[1p]} \\
\vdots & & \vdots \\
A_{[p1]} & \cdots & A_{[pp]}\end{pmatrix}\,,
\end{equation}
all diagonal blocks being square matrices. Our goal is to express the determinant of such a matrix as a linear combination of products of traces of the form
\[\Tr(A_{[a_{1}a_{2}]}\ldots A_{[a_{r}a_{1}]}).\]
Let us emphasize that \eqref{eq:dettr} can be turned into a formula of this kind simply by expanding blockwise the traces of powers of $A$. However, the formula that we will prove is not the formula that one would obtain in this way, and it is simpler. 

Let us define our notation more precisely. Let $p\geq 1$ be an integer, the size of $A$ as a block matrix. Let $n_{1},\ldots,n_{p}\geq 1$ be the sizes of the blocks, and $n=n_{1}+\cdots +n_{p}$ the total size of $A$. We denote the blocks of $A$ with indices between square brackets,
so that for all $a,b\in \set{p}$, the block $A_{[ab]}$ belongs to $M_{n_{a}n_{b}}(\C)$.
 
We keep the notation $A_{ij}$ for the scalar entry of $A$ located on the $i$-th row and the $j$-th column. 

Let us split the interval $\set{n}$ according to the block structure of $A$ by setting
\[I_{1}=\{1,\ldots,n_{1}\}, \ I_{2}=\{n_{1}+1,\ldots,n_{1}+n_{2}\}, \ \ldots, I_{p}=\{n_{1}+\cdots+n_{p-1}+1,\ldots n\}.\]
Let us also define, for each $i\in \set{n}$, the index $\bl(i)$ of the interval (or block) containing $i$, that is, the unique element of $\set{p}$ such that $i\in I_{\bl(i)}$. Thus, for all $i,j\in \set{n}$, the entry $A_{ij}$ of $A$ is located in the block $A_{[\bl(i)\bl(j)]}$.

\begin{proposition}\label{prop:main}
With the current notation, the following equality holds:
\begin{equation}\label{eq:dettrblock}
\det A=\frac{1}{n_{1}!\ldots n_{p}!}\sum_{\sigma \in \S_{n}} \epsilon(\sigma) \prod_{(i_{1} \cdots i_{r})\preccurlyeq \sigma} 
\Tr\left(A_{[\bl(i_{1})\bl(i_{2})]} \ldots A_{[\bl(i_{r})\bl(i_{1})]}\right)\,.
\end{equation}
\end{proposition}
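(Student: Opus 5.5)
The plan is to imitate the proof of Proposition~\ref{prop:dettr1}, replacing the full set of maps $\Map_n$ by the set of \emph{block-preserving} maps. Let $\Map_{\bl}$ denote the set of maps $\alpha:\set{n}\to\set{n}$ such that $\bl(\alpha(i))=\bl(i)$ for all $i\in\set{n}$, that is, maps sending each interval $I_a$ into itself. This set has $n_1^{n_1}\cdots n_p^{n_p}$ elements. Among them, the bijective ones form the subgroup $\S_{n_1}\times\cdots\times\S_{n_p}$ of block-preserving permutations, which has $n_1!\cdots n_p!$ elements.

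For each $\alpha\in\Map_{\bl}$, define $A_\alpha$ by $(A_\alpha)_{ij}=A_{\alpha(i)\alpha(j)}$, exactly as in \eqref{eq:defAa}.
\begin{itemize}
\item If $\alpha$ is a bijection, then $A_\alpha=PAP^{-1}$ for a permutation matrix $P$, so $\det A_\alpha=\det A$.
\item If $\alpha$ is not injective, then $A_\alpha$ has two equal rows, so $\det A_\alpha=0$.
\end{itemize}
Summing over $\Map_{\bl}$ therefore gives
\[
\det A=\frac{1}{n_1!\cdots n_p!}\sum_{\alpha\in\Map_{\bl}}\det A_\alpha .
\]

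Next, expand each $\det A_\alpha$ over $\sigma\in\S_n$ with the Leibniz formula and exchange the two sums. For fixed $\sigma$, the constraint on $\alpha$ factorizes over the points of $\set{n}$: $\alpha$ is simply a choice, for each $i$, of a value $\alpha(i)\in I_{\bl(i)}$. The product $\prod_i A_{\alpha(i)\alpha(\sigma(i))}$ splits along the cycles of $\sigma$. Hence, for each cycle $(i_1\cdots i_r)\preccurlyeq\sigma$, one obtains an independent factor
\[
\sum_{j_1\in I_{\bl(i_1)}}\cdots\sum_{j_r\in I_{\bl(i_r)}} A_{j_1j_2}A_{j_2j_3}\cdots A_{j_rj_1}.
\]

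Since $j_k$ ranges over the row indices of the block $A_{[\bl(i_k)\bl(i_{k+1})]}$ and $j_{k+1}$ over its column indices, this sum is exactly
\[
\Tr\big(A_{[\bl(i_1)\bl(i_2)]}\cdots A_{[\bl(i_r)\bl(i_1)]}\big).
\]
This gives \eqref{eq:dettrblock}.

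The only point requiring care is the choice of averaging set. Averaging over all of $\Map_n$, as in Proposition~\ref{prop:dettr1}, would produce traces of powers of the whole matrix $A$, with normalization $n!$. It is the restriction to block-preserving maps that produces block products along cycles and the symmetry factor $n_1!\cdots n_p!$. Once this choice is made, everything else is the same bookkeeping as before. As in the earlier remark, the identity multiplied by $n_1!\cdots n_p!$ holds over any commutative ring.
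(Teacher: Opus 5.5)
Your proposal is correct and follows the paper's proof essentially step for step. You average $\det A_\alpha$ over block-preserving maps $\alpha$, where only the $n_1!\cdots n_p!$ bijective ones contribute $\det A$, then expand by Leibniz, factor the sum over $\alpha$ along the cycles of $\sigma$, and recognize each block-constrained index sum as the trace of the corresponding product of blocks.
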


For example, if $p=1$, then the trace that appears in the right-hand side is $\Tr(A^{r})$ and we recover \eqref{eq:dettr}. In the other extreme example where $p=n$ and all the blocks have size $1$, the function~$\bl$ is the identity of $\set{n}$, and \eqref{eq:dettrblock} is simply the usual formula for the determinant.

\begin{proof} We will do the same computation as in the proof of Proposition \ref{prop:dettr1}, with the only difference that we will now restrict ourselves to maps $\alpha\in \Map_{n}$ which preserve the block structure of $A$, in the sense that $\bl\circ \alpha=\bl$. Concretely, these are the maps which apply each interval $I_{1},\ldots,I_{p}$ into itself. The number of such maps which are moreover bijective is $n_{1}! \ldots n_{p}!$, so that the first part of the argument now reads
\[\det A=\frac{1}{n_{1}!\ldots n_{p}!} \sum_{\substack{\alpha \in \Map_{n}\\ \bl\circ \alpha=\bl}} \det A_{\alpha}.\]
Expanding the determinant of $A_{\alpha}$, we find
\[\det A=\frac{1}{n_{1}!\ldots n_{p}!} \sum_{\sigma\in \S_{n}} \epsilon(\sigma) \prod_{(i_{1}\, \cdots \, i_{r})\preccurlyeq \sigma}  
\; \sum_{\substack{\alpha:\{i_{1},\ldots,i_{r}\}\to \set{n}\\ 
{\bl}\circ \alpha={\bl}}} A_{\alpha(i_{1})\alpha(i_{2})}\ldots A_{\alpha(i_{r})\alpha(i_{1})}.\]
We now rename the indices $\alpha(i_{1}),\ldots,\alpha(i_{r})$ as $j_{1},\ldots,j_{r}$, which are still independent of each other. The difference with the case of Proposition \ref{prop:dettr1} is that $j_{1}$ is constrained to belong to the same block as $i_{1}$, and so on. Thus, we have
\[\det A=\frac{1}{n_{1}!\ldots n_{p}!} \sum_{\sigma\in \S_{n}} \epsilon(\sigma) \prod_{(i_{1}\, \cdots\,  i_{r})\preccurlyeq \sigma} \; \sum_{j_{1}\in I_{\bl(i_{1})}, \ldots, j_{r}\in I_{\bl(i_{r})}} A_{j_{1}j_{2}}\ldots A_{j_{r}j_{1}}\]
from which the announced formula follows immediately.
\end{proof}

As in the previous section, this formula, with both sides multiplied by $n_{1}!\ldots n_{p}!$, holds for matrices $A$ with entries in an arbitrary commutative ring.

\subsection{A formula in terms of matrices with non-commuting entries}

We will now rewrite the statement of Proposition \ref{prop:main} using the notion of trace-determinant, a special case of the notion of $\tau$-determinant that we introduced in \cite{KL2}.\footnote{The content of this subsection is not needed for the rest of the paper, but it is interesting in its own right, and may be used in future work.}

The minimal setting is the following. We are given a multiplicative monoid $R$, for instance a ring, commutative of not, with or without unit. We are also given a commutative ring $S$, and a map $\tau : R\to S$ that is central in the sense that $\tau(uv)=\tau(vu)$ for all $u,v\in R$.
Then, to a matrix $M\in M_{n}(R)$ we associate the $\tau$-determinant
\[{\det}_{\tau}(M)= \sum_{\sigma \in \S_n} \varepsilon(\sigma) \prod_{(i_{1}\, \cdots\,  i_{r}) \preccurlyeq \sigma} \tau(M_{i_{1}i_{2}}\ldots M_{i_{r}i_{1}}),\]
which is an element of $S$.

For our purposes, we define the ring
\[R=\bigoplus_{k,l\geq 1} M_{k,l}(\C),\]
in which the product of two matrices of sizes $(k,l)$ and $(k',l')$ is defined to be zero if $l\neq k'$. We take $S=\C$, and define $\tau$ to be the usual trace on $M_{k,l}(\C)$ when $k=l$, and zero when $k\neq l$. With these definitions, for any two matrices $U$ and $V$, either $U$ and $V$ have transposed sizes, in which case $UV$ and $VU$ are square and $\tau(UV)=\tau(VU)$; or they do not, in which case, at least one of the products $UV$ and $VU$ is not a square matrix, and the other is equal to $0$, so that $\tau(UV)=\tau(VU)=0$. In this setting, we use the notation ${\det}_{\Tr}$ for the $\tau$-determinant. 

Let us come back to the situation of the block matrix $A$ studied in the previous section. To~$A$, let us associate the matrix $A^{\carre}\in M_{n}(R)$ defined by setting, for all $i,j\in \set{n}$,
\[A^{\carre}_{ij}=A_{[\bl(i)\bl(j)]}.\]
In English, we replace each entry of $A$ by the block which contains it. To be clear, the matrix~$A^{\carre}$ has the same size as the matrix $A$, but its entries are not scalars anymore: they are matrices. Moreover, these entries are constant on each block $I_{a}\times I_{b}$.

With this notation, Proposition \ref{prop:main} can be formulated as follows.

\begin{corollary}\label{coro:main}
The following equality holds:
\begin{equation}
  n_1!\ldots n_p! \, \det A= {\det}_{\Tr}(A^{\carre})\,.
\end{equation}
\end{corollary}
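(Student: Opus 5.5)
The plan is to unfold the definition of ${\det}_{\Tr}$ applied to $A^{\carre}$ and check that it coincides, term by term, with the right-hand side of Proposition~\ref{prop:main} multiplied by $n_1!\ldots n_p!$. By definition,
\[{\det}_{\Tr}(A^{\carre})=\sum_{\sigma\in\S_n}\epsilon(\sigma)\prod_{(i_1\,\cdots\,i_r)\preccurlyeq\sigma}\tau\big(A^{\carre}_{i_1i_2}A^{\carre}_{i_2i_3}\cdots A^{\carre}_{i_ri_1}\big),\]
and since $A^{\carre}_{i_ki_{k+1}}=A_{[\bl(i_k)\bl(i_{k+1})]}$, each factor is $\tau$ applied to the product $A_{[\bl(i_1)\bl(i_2)]}\cdots A_{[\bl(i_r)\bl(i_1)]}$ computed in the ring $R$.

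The only point to verify is that this product in $R$ agrees with the ordinary matrix product, and that $\tau$ of it is the ordinary trace. Consecutive factors $A_{[\bl(i_k)\bl(i_{k+1})]}$ and $A_{[\bl(i_{k+1})\bl(i_{k+2})]}$ have sizes $n_{\bl(i_k)}\times n_{\bl(i_{k+1})}$ and $n_{\bl(i_{k+1})}\times n_{\bl(i_{k+2})}$. The inner dimensions therefore match, so the product in $R$ is never forced to vanish by the size rule. It is the usual product, a square matrix of size $n_{\bl(i_1)}$ because the word closes up at $\bl(i_1)$. Consequently $\tau$ equals $\Tr$ on it.

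A subtle point is that $R$ is a direct sum, and elements of different sizes that happen to have the same numerical dimensions live in the same summand $M_{k,l}(\C)$. This causes no trouble here, since each block is regarded as an element of its own summand $M_{n_a,n_b}(\C)$, and all products occurring are genuine matrix products with compatible sizes. Also, a cycle $(i_1\,\cdots\,i_r)$ can be written starting at any of its elements, but the centrality of $\tau$ (cyclicity of trace) makes the factor independent of that choice, exactly as in Proposition~\ref{prop:main}.

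Substituting, ${\det}_{\Tr}(A^{\carre})$ becomes exactly the sum over $\sigma$ appearing in \eqref{eq:dettrblock}. Proposition~\ref{prop:main} then gives $n_1!\ldots n_p!\det A$. There is no real obstacle: the only step requiring care is the dimension bookkeeping in $R$, which guarantees that no spurious zero products appear.
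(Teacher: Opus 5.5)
Your proposal is correct and matches the paper, which obtains the corollary simply by observing that, once the definition of ${\det}_{\Tr}(A^{\carre})$ is unfolded, it becomes the right-hand side of Proposition~\ref{prop:main} multiplied by $n_1!\ldots n_p!$. Your check that each cyclic product of blocks is a dimension-compatible matrix product in $R$, so that $\tau$ of it is the ordinary trace, makes explicit the one detail the paper leaves implicit.
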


\section{Block matrices with scalar diagonal blocks}\label{sec:main}

In this section, we come back to the setting of Section \ref{sec:formblockmat} and we investigate the extent to which \eqref{eq:dettrblock} can be simplified when it is applied to a block matrix in which the diagonal blocks are scalar. To be precise, we will from now on make the assumption that for each $a\in \set{p}$, there exists a scalar $z_{a}$ such that the diagonal block $A_{[aa]}$ is the matrix $z_{a}I_{n_{a}}$. 

Our reformulation of \eqref{eq:dettrblock} under this new assumption involves multisets of cyclic walks on~$\set{p}$, which we define and for which we introduce some notation. 

\subsection{Cyclic walks} \label{sec:cyclicwalks}

First of all, by a {\em cyclic walk} on $\set{p}$, we mean a cyclically ordered list of length at least $2$ of elements of $\set{p}$, in which any two successive elements are distinct. We denote a cyclic walk of length $k\geq 2$ by $\cy a_{1},\ldots,a_{k}\yc=\cy a_{i},\ldots,a_{k},a_{1},\ldots,a_{i-1}\yc$, using double brackets to distinguish them from the cycles of permutations, and to be clear, we assume that $a_{1}\neq a_{2},\ldots, a_{k-1}\neq a_{k}, a_{k}\neq a_{1}$. The length of a cyclic walk $\cyc$ is denoted by $|\cyc|$.

Note that a cyclic walk may contain several times the same element of $\set{p}$. For example, $\cy1,2,1,2\yc$ and $\cy 1,2,1,3 \yc$ are cyclic walks, whereas $\cy 1,2,3,1\yc$ is not. 

A cyclic walk $\cyc=\cy a_{1},\ldots,a_{k}\yc$ of length $|\cyc|=k$ can be understood as the orbit of the $k$-tuple $(a_{1},\ldots,a_{k})$ of elements of $\set{p}$ under the action by cyclic permutations of $\Z/k\Z$. Each element of this orbit has the same stabiliser under this action, and the order (that is, the cardinality) of this stabiliser is called the {\em valuation} of the cyclic walk. It is denoted by\begin{equation}\label{eq:valuation}
\val(\cyc)=\big\vert \mathrm{Stab}_{ \Z/\vert \cyc\vert\Z}(\cyc)\big\vert\,.
\end{equation}
For example, the valuation of $\cy1,2,1,2\yc$ is $2$ and the valuation of $\cy 1,2,1,3 \yc$ is $1$. 

The number of visits of a cyclic walk $\cyc=(\!(a_{1},\ldots,a_{k})\!)$ at a vertex $a\in \set{p}$ is defined as the number
\[v_{a}(\cyc)=|\{i\in \set{k} : a_{i}=a\}|.\]
We will use the notation 
\[\ul{v}(\cyc)=(v_{1}(\cyc),\ldots,v_{p}(\cyc))\]
for the vector of these numbers of visits at all the elements of $\set{p}$.

Given a set, or multiset, $\Cyc$ of cyclic walks, we denote for each $a\in \set{p}$ by $v_{a}(\Cyc)$ the total number of visits at $a$ of the elements of $\Cyc$:
\begin{equation}\label{eq:visits}
v_{a}(\Cyc)=\sum_{\cyc\in \Cyc} v_{a}(\cyc)\,,
\end{equation}
where the sum takes into account the multiplicities of the elements of $\Cyc$. We also define the vector of integers $\ul{v}(\Cyc)=(v_{1}(\Cyc),\ldots,v_{p}(\Cyc))$.

Let us denote by $\CMS$ the set of multisets of cyclic walks on $\set{p}$ and by $\CMSf$ the subset of $\CMS$ formed by multisets of cyclic walks for which the total number of visits at each vertex $a\in \set{p}$ is not greater than $n_{a}$:
\begin{equation}\label{eq:def-CMS}
\CMSf=\{\Cyc \text{ multiset of cyclic walks } : \forall a\in \set{p}, \,  v_{a}(\Cyc)\leq n_{a}\}.
\end{equation}

For each such multiset $\Cyc$, we denote by $\Cyc!$ the product of the factorials of the multiplicities of the elements of $\Cyc$. Note that the number $\Cyc!$ is equal to $1$ if and only if every element of $\Cyc$ has multiplicity $1$, that is, if and only if $\Cyc$ is a set.

Finally, given a cyclic walk $\cyc=\cy a_{1},\ldots,a_{k}\yc$, the matrix $A_{[a_{1}a_{2}]}\ldots A_{[a_{k}a_{1}]}$ is ill-defined, as even its size depends on the chosen starting point $a_{1}$ of the cyclic walk. However, the trace of this matrix does not depend on this choice, and we define\footnote{The letter W is mnemonic for \emph{Wilson loop}, a quantity (the \emph{expectation} of the trace of the holonomy of a loop, with respect to a \emph{random} connection)  similar to $W(U,\cyc)$ which appears in lattice gauge theory, a field pioneered by K. Wilson. See also Theorem \ref{thm:gauge-theory}.} 
\begin{equation}\label{eq:defhAcyc}
W(A,\cyc)=\Tr(A_{[a_{1}a_{2}]}\ldots A_{[a_{k}a_{1}]}).
\end{equation}

We will use the notation $\ul{n}=(n_{1},\ldots,n_{p})$ for the vector of sizes of the blocks of $A$. For a vector of integers $\ul{m}=(m_{1},\ldots,m_{p})$, we will use the shorthand notation 
\[\ul{z}^{\ul{m}}=z_{1}^{m_{1}}\ldots z_{p}^{m_{p}}.\] 

We can now state the new version of \eqref{eq:dettrblock}.

\begin{theorem}\label{thm:main} 
Let $A$ be a matrix with entries in a field of characteristic $0$ and with the structure depicted in \eqref{eq:Ablocks}.
Assume that for each $a\in \set{p}$, the diagonal block $A_{[aa]}$ is of the form $z_{a}I_{n_{a}}$ for some scalar $z_{a}$. Then, with the notation introduced above, the following equality holds:
\begin{equation}
\det A= \sum_{\Cyc \in \CMSf} \frac{\ul{z}^{\ul{n}-\ul{v}(\Cyc)} }{\Cyc!} 
 \prod_{\cyc\in \Cyc} \frac{-W(-A,\cyc)}{\val(\cyc)}\, ,
\label{eq:dettrblockdiag}
\end{equation}
where the product over $\cyc \in \Cyc$ takes multiplicities into account.
\end{theorem}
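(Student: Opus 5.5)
Starting from Proposition~\ref{prop:main}, I will group the permutations $\sigma\in\S_n$ according to the cyclic walks they induce on $\set{p}$. The first step is to deal with consecutive repetitions of the same block. For a cycle $(i_1\,\cdots\,i_r)$ of $\sigma$, read the word $\bl(i_1),\ldots,\bl(i_r)$ cyclically and merge maximal runs of equal letters. If a run has letter $a$ and length $\ell$, it contributes the factor $A_{[aa]}^{\ell-1}=z_a^{\ell-1}I$ to the product of blocks. Hence the trace equals
\begin{itemize}
\item $z_a^{r}\cdot n_a$ when the cycle lies entirely in one block $I_a$;
\item otherwise, $\prod_a z_a^{(\text{visits to }a)-(\text{runs at }a)}\,W(A,\cyc)$, where $\cyc$ is the cyclic walk obtained by collapsing the runs.
\end{itemize}
So each $\sigma$ determines a multiset $\Cyc(\sigma)$ of cyclic walks, one for each cycle of $\sigma$ that meets at least two blocks. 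Its weight is
\[
\epsilon(\sigma)\,\prod_{a} z_a^{n_a-v_a(\Cyc(\sigma))}\; n_a^{\#\{\text{cycles of }\sigma\text{ inside }I_a\}}\;\prod_{\cyc\in\Cyc(\sigma)}W(A,\cyc).
\]
Clearly $v_a(\Cyc(\sigma))\le n_a$, so $\Cyc(\sigma)\in\CMSf$.

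The second step, which I expect to be the main obstacle, is to fix $\Cyc\in\CMSf$ and compute
\[
\frac{1}{\prod_a n_a!}\sum_{\sigma:\Cyc(\sigma)=\Cyc}\epsilon(\sigma)\prod_a n_a^{\#\text{internal cycles}}.
\]
I will show it equals $\frac{1}{\Cyc!}\prod_{\cyc}\frac{(-1)^{|\cyc|-1}}{\val(\cyc)}$ with the appropriate sign bookkeeping; the stated product $\prod -W(-A,\cyc)=\prod(-1)^{|\cyc|-1}W(A,\cyc)$ matches this.

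To avoid a messy direct count, I plan to build $\sigma$ in stages. Let $m_a=v_a(\Cyc)$ be the number of "run-representatives" at $a$.
\begin{enumerate}
\item Choose which runs occur and which elements of $I_a$ play the role of the run-heads, and arrange them into cycles realizing $\Cyc$.
\item Insert the remaining elements of $I_a$, either into existing runs or into internal cycles of block $a$.
\end{enumerate}
The cleaner route is the following observation. Fix the "skeleton" permutation $\tau$ on the $\sum_a m_a$ run-heads, whose cycles are the walks. Then the remaining $n_a-m_a$ elements of each block are inserted by a procedure equivalent to choosing an arbitrary permutation $\rho_a$ of a set of size $n_a$ in which the $m_a$ heads are "marked". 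Concretely, $\sigma$ restricted to block-runs is $\tau\circ\rho$, where $\rho=\prod_a\rho_a$ and $\rho_a\in\S(I_a)$.

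Under this correspondence, cycles of $\rho_a$ avoiding the heads become internal cycles, and cycles through a head become runs. The sign factorizes as $\epsilon(\sigma)=\epsilon(\tau)\prod_a\epsilon(\rho_a)$, up to checking this carefully. The sum over $\rho_a$ of $\epsilon(\rho_a)\,n_a^{\#\text{cycles avoiding heads}}$ can then be evaluated using $\sum_{\rho\in\S_N}\epsilon(\rho)x^{\#\text{cycles}}=x(x-1)\cdots(x-N+1)$, suitably adapted to marked points. This should produce exactly $n_a!/(\text{number of ways of labelling the heads})$, which cancels against the factor $1/n_a!$.

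The final step is the leftover count. The number of ways to label the heads so as to realize the multiset $\Cyc$ is $\prod_a m_a!$ divided by the automorphism factor $\Cyc!\prod_{\cyc}\val(\cyc)$, since rotations by the stabilizer and permutations of equal walks give the same $\sigma$. The skeleton sign is $\epsilon(\tau)=\prod(-1)^{|\cyc|-1}$. The main risk is in the marked-point sum; I will verify it first on $p=1$, where Theorem~\ref{thm:main} reduces to $\det(zI)=z^n$, and on $p=2$ with $n_1=n_2=1$, before writing the general argument. If the sign and $n_a$-weights prove awkward, I will fall back on the generating-function route: expand $\log\det(D+B)$ with $D=\diag(z_a I)$ and $B$ off-diagonal, which gives $\Tr\log D-\sum_k\frac{1}{k}\Tr\big((-D^{-1}B)^k\big)$, and then exponentiate.
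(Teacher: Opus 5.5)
Your first step is correct: collapsing runs, and the resulting weight $\epsilon(\sigma)\prod_a z_a^{n_a-v_a}n_a^{\#\text{internal}}\prod W(A,\cyc)$, is exactly the paper's passage from $\sigma$ to the kinematic permutation $\delta_\sigma$ on the set $R_\sigma$ of run-heads. Your final labelling count is Lemma~\ref{lem:counting}.

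The gap is in the parametrization of the second step. For a fixed skeleton $\tau$ on a fixed head set, the $\sigma$ with this skeleton are \emph{not} the $\tau\circ\rho$ with each $\rho_a$ arbitrary in $\S(I_a)$. One checks that $\delta_{\tau\circ\rho}=\tau\circ\pi_\rho$, where $\pi_\rho$ is the first-return map of $\rho$ to the heads. So whenever a cycle of some $\rho_a$ contains two heads, the skeleton changes, and with it the multiset of cyclic walks. For example, take heads $h_1,h_3\in I_1$ and $h_2,h_4\in I_2$, with $\tau=(h_1\,h_2)(h_3\,h_4)$ and $\rho_1=(h_1\,h_3)$. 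Then $\tau\circ\rho=(h_1\,h_4\,h_3\,h_2)$, whose walk is $\cy1,2,1,2\yc$ rather than $\{\cy1,2\yc,\cy1,2\yc\}$. Your sentence ``cycles through a head become runs'' silently assumes one head per cycle.

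This is not cosmetic. Over arbitrary $\rho_a$, fix the permutation $\pi$ induced on the heads and insert the non-heads one at a time. This gives
\[
\sum_{\rho_a\in\S(I_a)}\epsilon(\rho_a)\,n_a^{c_0(\rho_a)}=(-1)^{n_a}\Big(\sum_{\pi\in\S_{m_a}}(-1)^{\ell(\pi)}\Big)\prod_{j=0}^{n_a-m_a-1}(m_a+j-n_a),
\]
where $c_0$ counts cycles avoiding the heads. This vanishes as soon as $m_a\ge 2$. With $n_1=n_2=2$ and $z_1=z_2=0$, only multisets with $m_1=m_2=2$ survive, so your computation would return $\det A=0$. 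In fact $\det A=\det A_{[12]}\det A_{[21]}$. Your planned checks ($p=1$, and $p=2$ with $n_1=n_2=1$) have all $m_a\le 1$, where the two parametrizations agree, so they would not detect this.

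The fix is to restrict to those $\rho_a$ in which every cycle contains at most one head, i.e.\ $\pi_\rho=\mathrm{id}$.
\begin{enumerate}
\item Then $\rho\mapsto\tau\circ\rho$ is a bijection onto $\{\sigma: R_\sigma=R,\ \delta_\sigma=\tau\}$. Its inverse $\sigma\mapsto\tau^{-1}\circ\sigma$ sends each run-tail to the head of its own run.
\item There are $n_a!/m_a!$ such $\rho_a$, and $\epsilon(\sigma)=\epsilon(\tau)\epsilon(\rho)$ holds.
\item The same insertion computation with $\pi=\mathrm{id}$ gives $\sum_{\rho_a}\epsilon(\rho_a)n_a^{c_0(\rho_a)}=(n_a-m_a)!$. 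This is the paper's \eqref{eq:resto} with its sign absorbed.
\item Multiplying by the $\binom{n_a}{m_a}$ choices of head set gives the $n_a!/m_a!$ you expected.
\end{enumerate}
The remaining count $\prod_a m_a!/(\Cyc!\prod_{\cyc}\val(\cyc))$ of skeletons realizing $\Cyc$ still needs the orbit--stabiliser argument of Lemma~\ref{lem:counting}. You must show that the automorphism group of a based, ordered presentation of $\Cyc$ acts freely on labellings and that its orbits are exactly the fibres. With these corrections your argument is the paper's first proof. Your $\log\det$ fallback is the paper's second proof and works as stated.
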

 
Note that in the sum, the empty multiset $\Cyc$ needs to be taken into account and contributes with $\ul{z}^{\ul{n}}$.

Let us also indicate that after multiplying both sides by $n_{1}!\ldots n_{p}!$, the equality above involves only integer coefficients, and is valid for a matrix $A$ with entries in an arbitrary commutative ring, see Section \ref{sec:thmmainintegral}.

The case where $p=1$ of the theorem is rather trivial, since $A$ is diagonal. In the case where $p=n$ however, \eqref{eq:dettrblockdiag} is the usual formula for the determinant, with fixed points of permutations separated from non-trivial cycles.

Theorem \ref{thm:main} has the following corollary, which computes the characteristic polynomial of $A$. Let $T=\diag(t_{1}I_{n_{1}},\ldots,t_{p}I_{n_{p}})$ be an arbitrary block-diagonal matrix with scalar diagonal blocks. 

\begin{corollary}\label{coro:thm-main} 
\begin{equation*}
\det(T+A)=\sum_{\ul{0}\leq \ul{k}\leq \ul{n}} \ul{t}^{\ul{k}}\,  \sum_{\substack{\Cyc \in \CMS \\ \ul{v}(\Cyc)+\ul{k}\leq \ul{n}}}
\binom{n_{1}-v_{1}(\Cyc)}{k_{1}}\ldots \binom{n_{p}-v_{p}(\Cyc)}{k_{p}}
 \frac{\ul{z}^{\ul{n}-\ul{k}-\ul{v}(\Cyc)} }{\Cyc!} 
 \prod_{\cyc\in \Cyc} \frac{- W(-A,\cyc)}{\val(\cyc)}.
 \end{equation*}
\end{corollary}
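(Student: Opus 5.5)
The plan is to apply Theorem~\ref{thm:main} to the matrix $T+A$ and then expand the resulting powers of $\ul{t}+\ul{z}$ by the binomial theorem. The matrix $T+A$ has the same block structure as $A$. Its diagonal blocks are $(t_{a}+z_{a})I_{n_{a}}$, which are again scalar, and its off-diagonal blocks coincide with those of $A$. We work over the field $\C(t_{1},\ldots,t_{p})$ if the $t_a$ are indeterminates; in any case the identity is polynomial in $\ul{t}$, so it suffices to check it there.

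The first step is to observe that the quantity $W(-(T+A),\cyc)=W(-A,\cyc)$ for every cyclic walk $\cyc$. This holds because a cyclic walk never contains two successive equal elements, so the product defining $W$ in \eqref{eq:defhAcyc} involves only off-diagonal blocks. Theorem~\ref{thm:main} then gives
\[
\det(T+A)=\sum_{\Cyc\in\CMSf}\frac{(\ul{t}+\ul{z})^{\ul{n}-\ul{v}(\Cyc)}}{\Cyc!}\prod_{\cyc\in\Cyc}\frac{-W(-A,\cyc)}{\val(\cyc)}\,,
\]
where $(\ul{t}+\ul{z})^{\ul{m}}=\prod_{a}(t_{a}+z_{a})^{m_{a}}$.

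The second step expands each factor:
\[
(t_{a}+z_{a})^{n_{a}-v_{a}(\Cyc)}=\sum_{k_{a}=0}^{n_{a}-v_{a}(\Cyc)}\binom{n_{a}-v_{a}(\Cyc)}{k_{a}}t_{a}^{k_{a}}z_{a}^{n_{a}-v_{a}(\Cyc)-k_{a}}.
\]
Taking the product over $a$ yields a sum over vectors $\ul{k}$ with $\ul{0}\le\ul{k}\le\ul{n}-\ul{v}(\Cyc)$. We then exchange the sums over $\Cyc$ and $\ul{k}$. For fixed $\ul{k}$ with $\ul{0}\le\ul{k}\le\ul{n}$, the admissible multisets are exactly those $\Cyc\in\CMS$ with $\ul{v}(\Cyc)+\ul{k}\le\ul{n}$, and this condition automatically implies $\Cyc\in\CMSf$. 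This is precisely the stated formula.

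There is no real obstacle. The only points needing care are the invariance of $W$ under changing the diagonal blocks and the correct bookkeeping of the summation ranges when interchanging the two sums. The remaining work is routine.
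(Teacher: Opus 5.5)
Your proposal is correct and is exactly the paper's argument: apply Theorem~\ref{thm:main} to $T+A$, whose diagonal blocks are $(z_a+t_a)I_{n_a}$ and whose off-diagonal blocks are unchanged, then expand each $(z_a+t_a)^{n_a-v_a(\Cyc)}$ by the binomial theorem and exchange the sums. You also spell out two points the paper's one-line proof leaves implicit: that $W(-A,\cyc)$ is unchanged because cyclic walks never use diagonal blocks, and how the summation ranges transform when the sums are exchanged.
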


\begin{proof} Simply replace $\ul{z}$ by $\ul{z}+\ul{t}$ and use the binomial identity for each $a\in \set{p}$.
\end{proof}

The proof of Theorem \ref{thm:main} consists in a computation that is not difficult, but involves several steps, that we present in great detail. It consists of the seven subsections \ref{sec:preparation} to \ref{sec:proofcounting}. 
Another proof of Theorem \ref{thm:main} is given in Section \ref{sec:alternative-proof}.

\subsection{Kinematic permutations} \label{sec:preparation}

Let us start by describing an operation which takes a permutation $\sigma$ of $\set{n}$ and produces a subset $R_{\sigma}$ of $\set{n}$ and a fixed point free permutation~$\delta_{\sigma}$ of this subset. This operation depends on the partition $\set{n}=I_{1}\sqcup \ldots \sqcup I_{p}$ corresponding to the block structure of the matrix $A$.  In words, $\delta_{\sigma}$ is obtained from $\sigma$ by removing from its cycle any element of $\set{n}$ that is in the same block as its predecessor. Any cycle of $\sigma$ that is contained in a single block is removed. The set~$R_{\sigma}$ is then just the support of $\delta_{\sigma}$, that is, the set of elements that have not been removed. It is possible that~$R_{\sigma}$ is empty: this happens exactly if $\sigma$ preserves the block structure. 

More precisely, we start by setting
\[R_{\sigma}=\{i\in \set{n} : \bl(\sigma^{-1}(i))\neq\bl(i)\}\,.\]
Then, for all $i\in R_{\sigma}$, we define
\[t(\sigma,i)=\min\{k\geq 1 : \sigma^{k}(i)\in R_{\sigma}\}=\min\{k\geq 1 : \bl(\sigma^{k}(i))\neq \bl(i)\}.\]
Note that since the cycle of any element $i$ of $R_{\sigma}$ contains at least two elements which do not belong to the same block, the set of which $t(\sigma,i)$ is the minimum is not empty. Finally, we define the permutation $\delta_{\sigma}$ of $R_{\sigma}$ by setting, for all $i\in R_{\sigma}$,
\[\delta_{\sigma}(i)=\sigma^{t(\sigma,i)}(i).\]
If $R_{\sigma}$ is empty, $\delta_{\sigma}$ is the unique map from $R_{\sigma}$ to itself.

For example, if $n=10$, $n_{1}=4$ and $n_{2}=n_{3}=3$, so that
\[\set{10}=\{1,\ldots,10\}=\underbrace{\{1,2,3,4\}}_{I_{1}} \sqcup \underbrace{\{5,6,7\}}_{I_{2}}\sqcup \underbrace{\{8,9,10\}}_{I_{3}},\]
and if $\sigma=(1\, 5 \, 6\, 3) (2\, 4) (8\, 7\, 9 \, 10)$, then $R_{\sigma}=\{3,5,7,9\}$ and $\delta_{\sigma}=(3\, 5)(7\, 9)$. 

The permutation $\delta_{\sigma}$ is {\em kinematic} in the sense that it changes blocks at each step: $\bl(\delta_{\sigma}(i))$ is never equal to $\bl(i)$. 

We will denote by $K(R)$ the set of kinematic permutations of a subset $R$ of $\set{n}$. The set $K(R)$ can be empty, and even when it is not, it is never a subgroup of the group of permutations of $R$. 

\subsection{Three more pieces of notation}

Firstly, given a cycle $c=(i_{1}\cdots i_{r})$ of a permutation of~$\set{n}$, or of a permutation of a subset of $\set{n}$, let us define
\begin{equation}\label{eq:defhAcycperm}
W(A,c)=\Tr(A_{[\bl(i_{1})\bl(i_{2})]}\ldots A_{[\bl(i_{r})\bl(i_{1})]}).
\end{equation}
With the notation \eqref{eq:defhAcyc}, this is $W(A,\cyc)$ where we have set $\cyc=\cy \bl(i_{1}),\ldots,\bl(i_{r}) \yc$.
\medskip

Secondly, for all subset $R$ of $\set{n}$ and all $a\in \set{p}$, we set
\begin{equation}
|R|_{a}=|R\cap I_{a}|.
\end{equation}
For a cyclic permutation $c$ of a subset of $\set{n}$, with support ${\rm Supp}(c)$, we set $|c|_{a}=|{\rm Supp}(c)|_{a}$.
\medskip

Thirdly, for all permutation $\delta$ of a subset $R$ of $\set{n}$, we denote by $\ell(\delta)$ the number of cycles of~$\delta$. Let us emphasize that we do not count the elements of the complement of $R$ in $\set{n}$ as fixed points of $\delta$. Thus, in the example shown a few paragraphs above, $\ell(\delta_{\sigma})=2$. For a permutation~$\sigma$ in $\S_{n}$, the number $\ell(\sigma)$ is just the usual number of cycles of $\sigma$.

Moreover, for all $\sigma\in \S_{n}$ and all $a\in \set{p}$, we set
\[\ell_{a}(\sigma)=\text{ number of cycles of } \sigma \text{ entirely contained in } I_{a}.\]
With these definitions, we have, for each permutation $\sigma$ of $\set{n}$, the relation
\begin{equation}\label{eq:ellell}
\ell(\sigma)=\ell(\delta_{\sigma})+\sum_{a=1}^{p} \ell_{a}(\sigma),
\end{equation}
the first term counting the cycles of $\sigma$ which visit at least two blocks, and the second term being the number of cycles of $\sigma$ which stay within one block.

\subsection{First step : resummation over kinetic permutations}

With all this preparation, we are now in position to give an alternative expression of \eqref{eq:dettrblock} under our assumption that the diagonal blocks of $A$ are scalar. The goal of this first step of the proof is to establish \eqref{eq:amusebouche}.

Let us consider $\sigma\in \S_{n}$, and a cycle $c=(i_{1}\cdots i_{r})$ of $\sigma$. We want to compute
\[ \Tr\left(A_{[\bl(i_{1})\bl(i_{2})]} \ldots A_{[\bl(i_{r})\bl(i_{1})]}\right).\]
If $c$ stays inside $I_{a}$ for some $a\in \set{p}$, then this trace is equal to 
\[n_{a} z_{a}^{|c|} = n_{a} z_{a}^{|c|_{a}}.\]
If not, then the permutation $\delta_{\sigma}$ has a unique cycle with support contained in that of $c$: let us denote it by $c'$. Then the trace is equal to 
\[W(A,c')\prod_{a=1}^{p} z_{a}^{|c|_{a}-|c'|_{a}}\]
Taking the product over all cycles of $\sigma$, and observing that for all $a\in \set{p}$, 
\[\sum_{c\preccurlyeq \sigma} |c|_{a}=n_{a} \ \text{ and } \ \sum_{c\preccurlyeq \sigma} |c'|_{a}=|R_{\sigma}|_{a},\]
we find 
\[\prod_{(i_{1} \cdots i_{r})\preccurlyeq \sigma} \Tr\left(A_{[\bl(i_{1})\bl(i_{2})]} \ldots A_{[\bl(i_{r})\bl(i_{1})]}\right)=\prod_{a=1}^{p} z_{a}^{n_{a}-|R_{\sigma}|_{a}} n_{a}^{\ell_{a}(\sigma)} \prod_{c\preccurlyeq \delta_{\sigma}} W(A,c)\,.\]

From \eqref{eq:ellell}, we deduce the following relation between the signatures of $\sigma$ and $\delta_{\sigma}$:
\[\epsilon(\sigma)=(-1)^{n+\ell(\sigma)}=(-1)^{|R_{\sigma}|+\ell(\delta_{\sigma})} (-1)^{n+|R_{\sigma}|}\prod_{a=1}^{p} (-1)^{\ell_{a}(\sigma)}=\epsilon(\delta_{\sigma}) (-1)^{n+|R_{\sigma}|}\prod_{a=1}^{p} (-1)^{\ell_{a}(\sigma)}.\]
The signature of $\delta_{\sigma}$ can be absorbed in the terms $W(A,c)$ by writing
\[\epsilon(\delta_{\sigma})\prod_{c\preccurlyeq \delta_{\sigma}} W(A,c)=\prod_{c\preccurlyeq \delta_{\sigma}} -W(-A,c)\,.\]
Combining the last three equations, \eqref{eq:dettrblock} can be rewritten as
\begin{equation}\label{eq:apero}
\det A=\sum_{\sigma\in \S_{n}}(-1)^{n+|R_{\sigma}|} \prod_{a=1}^{p} \frac{1}{n_{a}!}z_{a}^{n_{a}-|R_{\sigma}|_{a}} (-n_{a})^{\ell_{a}(\sigma)} \prod_{c\preccurlyeq \delta_{\sigma}} -W(-A,c)\,.
\end{equation}
Let us reorganize this sum by summing first over permutations which yield a given pair $(R,\delta)$:
\begin{equation}\label{eq:amusebouche}
\det A=\sum_{R\subseteq \set{n}} (-1)^{n+|R|} \prod_{a=1}^{p} \frac{z_{a}^{n_{a}-|R|_{a}}}{n_{a}!} \sum_{\delta\in K(R)}\bigg(\sum_{\substack{\sigma\in \S_{n} \\ \delta_{\sigma}=\delta}} \prod_{a=1}^{p}(-n_{a})^{\ell_{a}(\sigma)}\bigg) \prod_{c\preccurlyeq \delta} -W(-A,c).
\end{equation}

\subsection{Second step : contribution of each kinetic permutation}

Given $R$ and $\delta$, we determine in how many ways the elements of~$\set{n}\setminus R$ can be inserted in the cycles of $\delta$ in order to produce a permutation $\sigma$ such that~$R_{\sigma}=R$ and~$\delta_{\sigma}=\delta$, and we count the total contribution of these permutations.
\medskip

The goal of this second step is to establish \eqref{eq:interessante}. To do this, we will prove that 
\begin{equation}\label{eq:resto}
\sum_{\substack{\sigma\in \S_{n} \\ \delta_{\sigma}=\delta}} \prod_{a=1}^{p}(-n_{a})^{\ell_{a}(\sigma)}=(-1)^{n+|R|} \prod_{a=1}^{p} (n_{a}-|R|_{a})!
\end{equation}
To prove this equation, given $R$ and $\delta$, we will add the elements of $\set{n}\setminus R$ in the cycles of $\delta$ one by one. In order for $R$ and $\delta$ to stay unchanged during the process, each new element must be either inserted immediately after an element of the same block, or used to create a new cycle. This procedure is a variation on the theme of Pitman's so-called ``\emph{Chinese restaurant}'' algorithm for sampling a uniform permutation \cite{Pitman}.

Let us focus on the first block first and consider the smallest element of $I_{1}\setminus (R\cap I_{1})$. This element can be either inserted immediately after one of the $|R|_{1}$ elements of $\delta$ belonging to the first block, or used to create a new cycle. In the first case, the number of cycles of $\delta$ within $I_{1}$ is not altered, whereas in the second case, it is increased by $1$. Thus, the various possible ways of inserting this smallest element contributes a factor
\[|R|_{1} \times 1 +1 \times (-n_{1})=|R|_{1}-n_{1}.\]
The insertion of the second smallest element of $I_{1}\setminus (R\cap I_{1})$ contributes almost with the same factor, with the only modification that there are now $|R|_{1}+1$ elements of the first block in the permutation that we are building. Thus, we get a factor
\[(|R|_{1}+1)\times 1+ 1\times (-n_{1})=|R|_{1}+1-n_{1}.\]
We continue this process until the insertion of the greatest element of $I_{1}\setminus (R\cap I_{1})$, which contributes a factor
\[(n_{1}-1)-n_{1}=-1.\]
The total contribution of the first block is thus
\[(|R|_{1}-n_{1})(|R|_{1}+1-n_{1})\ldots (-1)=(-1)^{|R|_{1}+n_{1}} (n_{1}-|R|_{1})!\]
Each block produces a similar contribution, and \eqref{eq:resto} is proved. 

Combining \eqref{eq:amusebouche} and \eqref{eq:resto}, we find the following expression of the determinant of $A$:
\begin{equation}\label{eq:interessante}
\det A=\sum_{R\subseteq \set{n}} \prod_{a=1}^{p} \frac{(n_{a}-|R|_{a})!}{n_{a}!}z_{a}^{n_{a}-|R|_{a}} \sum_{\delta\in K(R)}\prod_{c\preccurlyeq \delta} -W(-A,c).
\end{equation}

\subsection{Third step : independence of the support of the kinetic permutation} \label{sec:indepsupport}

In this third step, we prove \eqref{eq:option2}, where the set $R_{\ul{r}}$ is defined in the middle of the present section. 

The key to this new step is the fact, that we will now prove, that the sum over $\delta$ in \eqref{eq:interessante} depends on $R$ only through the list of integers $(|R|_{1},\ldots,|R|_{p})$. 

Indeed, suppose $R$ and $R'$ are two subsets of $\set{n}$ such that for all~$a\in \set{p}$, we have $|R|_{a}=|R'|_{a}$. Then there exists a block-preserving bijection $\rho:R'\to R$, that is, a bijection such that $\bl\circ \rho=\bl$. Then the conjugation by $\rho$ is a bijection between $K(R)$ and $K(R')$, and for all $\delta'\in K(R')$ we have
\[\prod_{c\preccurlyeq \rho\delta'\rho^{-1}} -W(-A,c)=\prod_{c'\preccurlyeq  \delta'} -W(-A,\rho c'\rho^{-1})=\prod_{c'\preccurlyeq  \delta'} -W(-A,c').\]
Thus,
\[\sum_{\delta\in K(R)}\prod_{c\preccurlyeq \delta} -W(-A,c)=\sum_{\delta'\in K(R')}\prod_{c'\preccurlyeq \delta'} -W(-A,c').\]

Given a $p$-tuple of integers $\ul{r}=(r_{1},\ldots,r_{p})\in (\{0\}\sqcup\set{n_1})\times \ldots \times (\{0\}\sqcup\set{n_p})$, let us define
\[R_{\ul{r}}=\bigsqcup_{a=1}^{p}\{(n_{1}+\ldots +n_{a-1})+1,\ldots,(n_{1}+\ldots +n_{a-1})+r_{a}\},\]
as a reference subset of $\set{n}$ such that for each $a\in \set{p}$, we have $|R_{\ul{r}}|_{a}=r_{a}$. Then
\[\det A=\sum_{\substack{r_{1},\ldots,r_{p} \\ 0\leq r_{a} \leq n_{a}}} \prod_{a=1}^{p} \frac{z_{a}^{n_{a}-r_{a}}}{r_{a}!} \binom{n_{a}}{r_{a}}^{-1} \sum_{\substack{R\subseteq \set{n} \\ |R|_{a}=r_{a}}}  \sum_{\delta\in K(R_{\ul{r}})} \prod_{c\preccurlyeq \delta} -W(-A,c).\]
In this expression, nothing depends on $R$, and the corresponding sum is exactly compensated by the product of the binomial coefficients. Therefore,
\begin{equation}\label{eq:option2}
\det A=\sum_{\substack{r_{1},\ldots,r_{p} \\ 0\leq r_{a} \leq n_{a}}} \frac{1}{r_{1}!\ldots r_{p}!}\prod_{a=1}^{p} z_{a}^{n_{a}-r_{a}}   \sum_{\delta\in K(R_{\ul{r}})} \prod_{c\preccurlyeq \delta} -W(-A,c).
\end{equation}

\subsection{Fourth step : a combinatorial lemma} The last step of the proof consists in getting rid of the sets $R_{\ul{r}}$. This will be done by an application of Lemma \ref{lem:counting} below.

Let us choose a $p$-tuple $\ul{r}=(r_{1},\ldots,r_{p})$ with $0\leq r_a \leq n_a$ for all $a\in \set{p}$, and introduce the following subset of $\CMSf$:
\[\CMS_{\ul{r}}=\{\Cyc \text{ multiset of cyclic walks } : \forall a\in \set{p}, \,  v_{a}(\Cyc)= r_{a}\}.\]

Consider an element $\delta$ of $K(R_{\ul{r}})$. To each cycle $c=(i_{1}\cdots i_{r})$ of $\delta$, let us associate the cyclic walk $\bl(c)=\cy \bl(i_{1}),\ldots,\bl(i_{r}) \yc$. It is possible that two distinct cycles of $\delta$ yield the same cyclic walk, so that by doing this operation on each cycle of $\delta$ and collecting the resulting cyclic walks, we produce a multiset of cyclic walks, that is in fact an element of~$\CMS_{\ul{r}}$, that we call $f(\delta)$. In doing this, we just defined a map 
\[f:K(R_{\ul{r}})\to \CMS_{\ul{r}}.\]

\begin{lemma}\label{lem:counting} The map $f:K(R_{\ul{r}})\to \CMS_{\ul{r}}$ is onto and for every $\Cyc\in \CMS_{\ul{r}}$, we have 
\[|f^{-1}(\Cyc)|=\frac{r_{1}! \ldots r_{p}!}{\Cyc! \prod_{\cyc \in \Cyc} \val(\cyc)} .\] 
\end{lemma}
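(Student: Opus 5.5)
We will count the fibres of $f$ by letting a group act. The group is
\[G=\S_{r_1}\times\cdots\times\S_{r_p},\]
acting on $R_{\ul r}$ by block-preserving bijections, the factor $\S_{r_a}$ permuting the elements of $R_{\ul r}\cap I_a$. It acts on $K(R_{\ul r})$ by conjugation $\delta\mapsto\rho\delta\rho^{-1}$, which sends kinematic permutations to kinematic permutations, as already used in Section~\ref{sec:indepsupport}. Since $\bl\circ\rho=\bl$, conjugation replaces each cycle $(i_1\cdots i_k)$ by $(\rho(i_1)\cdots\rho(i_k))$ and leaves the sequence of blocks unchanged. Hence $f(\rho\delta\rho^{-1})=f(\delta)$, so each fibre of $f$ is a union of $G$-orbits.

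\textbf{Surjectivity and transitivity on fibres.} Let $\Cyc\in\CMS_{\ul r}$. List its cyclic walks with multiplicity, choose a representative tuple $(a_1,\ldots,a_k)$ for each, and assign distinct elements of $R_{\ul r}\cap I_{a}$ to the occurrences of $a$. This is possible because the total number of visits to $a$ is exactly $r_a$. The resulting cycles are kinematic, since consecutive entries of a cyclic walk are distinct, and together they form a $\delta$ with $f(\delta)=\Cyc$. So $f$ is onto.

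Conversely, suppose $f(\delta)=f(\delta')$. We match the cycles of $\delta$ and $\delta'$ bijectively, pairing cycles that have the same cyclic walk. For each matched pair, we pick starting points so that the block sequences coincide, and map the elements of the first cycle to those of the second in order. This defines a block-preserving bijection $\rho$ of $R_{\ul r}$ with $\rho\delta\rho^{-1}=\delta'$. So each fibre is a single $G$-orbit.

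\textbf{Stabiliser count.} By the orbit–stabiliser theorem,
\[|f^{-1}(\Cyc)|=\frac{r_1!\cdots r_p!}{|\mathrm{Stab}(\delta)|},\]
so it remains to show $|\mathrm{Stab}(\delta)|=\Cyc!\prod_{\cyc\in\Cyc}\val(\cyc)$. An element $\rho$ of the centraliser of $\delta$ in $G$ permutes the cycles of $\delta$. Because $\rho$ preserves blocks, it can only send a cycle to a cycle with the same cyclic walk. The induced permutation of cycles can be any permutation within each group of cycles sharing a cyclic walk, which gives the factor $\Cyc!$.

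Once the image of a cycle $c=(i_1\cdots i_k)$ is fixed to be $c'$, the map $\rho|_c$ must intertwine the two cyclic actions. It is therefore determined by the choice of $\rho(i_1)$ among the elements of $c'$. That choice is admissible exactly when the block sequence read from $\rho(i_1)$ equals that read from $i_1$, i.e.\ when the corresponding rotation lies in the stabiliser of the tuple $(\bl(i_1),\ldots,\bl(i_k))$ under $\Z/k\Z$. This gives exactly $\val(\cyc)$ choices for each cycle. Multiplying over all cycles yields the claimed order of the stabiliser, and hence the formula for $|f^{-1}(\Cyc)|$.

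\textbf{Expected obstacle.} The main care point is this last step: checking that the block-preserving centralising bijections between two cycles with the same cyclic walk correspond exactly to the rotations stabilising the block tuple. In particular, their number is $\val(\cyc)$ and not $|\cyc|$. This is a direct check once the starting points of the cycles are fixed.
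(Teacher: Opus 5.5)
Your proposal is correct and follows the paper's argument: let the block-preserving group $B_{\ul{r}}=\S_{r_1}\times\cdots\times\S_{r_p}$ act by conjugation, identify the fibres of $f$ with its orbits, apply orbit--stabiliser, and count the stabiliser as (permutations of cycles sharing a cyclic walk, giving $\Cyc!$) times ($\val(\cyc)$ block-compatible rotations on each cycle). The only differences are presentational: you check transitivity on fibres explicitly where the paper just asserts it, and you count the stabiliser directly where the paper packages the same count as a short exact sequence.
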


Before proving this lemma, let us explain how it allows us to conclude the proof of Theorem~\ref{thm:main}. In view of \eqref{eq:defhAcyc}, \eqref{eq:defhAcycperm}, and the definition of $f$, we have
\[\prod_{c\preccurlyeq \delta} -W(-A,c) =\prod_{c\preccurlyeq \delta} -W(-A,\bl(c)) = \prod_{\cyc\in f(\delta)} -W(-A,\cyc).\]
Accordingly, 
\[\sum_{\delta\in K(R_{\ul{r}})} \prod_{\cyc \in f(\delta)} -W(-A,\cyc)=\sum_{\Cyc\in \CMS_{\ul{r}}} |f^{-1}(\Cyc)| \prod_{\cyc \in \Cyc} -W(-A,\cyc).\]
Replacing in \eqref{eq:option2} and using Lemma \ref{lem:counting}, we find
\begin{equation}\label{eq:option3}
\det A=\sum_{\substack{r_{1},\ldots,r_{p} \\ 0\leq r_{a} \leq n_{a}}} \prod_{a=1}^{p} z_{a}^{n_{a}-r_{a}}   
\sum_{\Cyc\in \CMS_{\ul{r}}} \frac{1}{\Cyc!\prod_{\cyc\in \Cyc} \val(\cyc)} \prod_{\cyc \in \Cyc} -W(-A,\cyc),
\end{equation}
which is equivalent to Theorem \ref{thm:main}.

\subsection{Proof of the combinatorial lemma}\label{sec:proofcounting}

In this last step, we prove Lemma \ref{lem:counting}. We will call {\em list} of elements of a set $X$ an element of $\bigcup_{k\geq 1}X^{k}$.

Let us consider $\Cyc\in \CMS_{\ul{r}}$. Our first claim is that $\Cyc$ admits a preimage by~$f$. To prove this, let us order totally $\Cyc$, which is a multiset of cyclic walks, to produce a list of cyclic walks, in which repetitions may occur, according to multiplicity. Then, for each cyclic walk in this list, let us break the symmetry under cyclic permutation by choosing a base point, thus producing a list of elements of $\set{p}$. From $\Cyc$, we have now constructed a list of lists of elements of $\set{p}$, which is of the form
\[((a_{1,1},\ldots,a_{1,r_{1}}),(a_{2,1},\ldots,a_{2,r_{2}}),\ldots,(a_{p,1},\ldots,a_{p,r_{p}})).\]
In this list of lists, there are $r_{1}$ occurrences of $1$, $r_{2}$ of $2$, and so on, and $r_{p}$ occurrences of $p$. For each $a\in \set{p}$, let us replace in an arbitrary way the $r_{a}$ occurrences of $a$ by the $r_{a}$ elements of $I_{a}\cap R_{\ul{r}}$. The result is a list of lists of elements of $R_{\ul{r}}$ in which every element of $R_{\ul{r}}$ appears exactly once. Let us look at this list as the list of cycles of a permutation of $R_{\ul{r}}$, that we denote by $\delta$. Then, $\delta$ is an element of $K(R_{\ul{r}})$, and $f(\delta)=\Cyc$.

In fact, any preimage of $\Cyc$ can be obtained in this way, and we will now count in how many distinct ways this can be done.

Let us introduce the group
\[B_{\ul{r}}=\prod_{a=1}^{p} {\rm Perm}(I_{a}\cap R_{\ul{r}})\]
of block-preserving permutations of $R_{\ul{r}}$. Note that $|B_{\ul{r}}|=r_{1}!\ldots r_{p}!$. This group acts by conjugation on $K(R_{\ul{r}})$, and the orbits of this action are exactly the fibres of the map $f$. 

Thus, to compute the size of the preimage of $\Cyc$, we need to compute the size of the orbit of~$\delta$ under the action by conjugation of $B_{\ul{r}}$. This is equal to 
\begin{equation}\label{eq:preimage}
|f^{-1}(\Cyc)|=\frac{|B_{\ul{r}}|}{|Z(\delta)\cap B_{\ul{r}}|},
\end{equation}
 where $Z(\delta)$ is the centraliser of $\delta$, and $Z(\delta)\cap B_{\ul{r}}$ is the stabiliser of $\delta$ for the action of $B_{\ul{r}}$. 

To compute the cardinality of $Z(\delta)\cap B_{\ul{r}}$, let us write $\delta$ as a product of cycles:
\[\delta=c_{1,1}\ldots c_{1,m_{1}}\ldots c_{q,1}\ldots c_{q,m_{q}},\]
where two cycles $c_{i,j}$ and $c_{i',j'}$ induce the same cyclic walk on $\set{p}$ if and only if $i=i'$. Thus,~$q$ is the number of distinct elements of $\Cyc$ and the integers $m_{1},\ldots,m_{q}$ are their multiplicities. In particular, $m_{1}!\ldots m_{q}!$ is by definition the number $\Cyc!$. 

Let us consider an element $\sigma$ of $Z(\delta)\cap B_{\ul{r}}$. Since $\sigma$ commutes to $\delta$, it sends the support of any cycle of $\delta$ to the support of a cycle of $\delta$ of the same length. Moreover, since $\sigma$ belongs to~$B_{\ul{r}}$, any cycle of $\delta$ and its image by $\sigma$ induce the same cyclic walk. Thus, there is a group homomorphism 
\[Z(\delta)\cap B_{\ul{r}} \to \S_{m_{1}}\times \ldots \times \S_{m_{q}}.\]

Let $(i_{1}\cdots i_{s})$ and $(j_{1}\cdots j_{s})$ be two cycles of $\delta$ that induce the same cyclic walk. We may assume that they are written in such a way that $\bl(i_{k})=\bl(j_{k})$ for each $k\in \set{s}$. Then, the permutation $\sigma=(i_{1}\, j_{1})\ldots (i_{s}\, j_{s})$ belongs to $Z(\delta)\cap B_{\ul{r}}$ and is sent by the morphism above to the transposition that exchanges our two cycles. Thus, this morphism is onto. Let us describe and enumerate its kernel.

Let $\sigma$ be an element of this kernel. Let $c=(i_{1}\cdots i_{s})$ be a cycle of $\delta$. Then $\sigma$ leaves the support of $c$ globally stable. Within this support, it is a permutation that commutes with $c$, and therefore is a power of $c$. Thus, counting modulo $s$ for indices, we have $\sigma(i_{k})=i_{k+l}$ for some integer $l$. The condition that $\sigma$ belongs to $B_{\ul{r}}$ imposes that $\bl(i_{k})=\bl(i_{k+l})$ for each $k$, so that the cyclic shift of order $l$ leaves the cyclic walk $\cyc=\cy \bl(i_{1}),\ldots,\bl(i_{s}) \yc$ invariant. The set of~$l$'s for which this is true has, by definition, a cardinality equal to the valuation of $\cyc$.

To summarize, there is an exact sequence
\[1\to \prod_{c\preccurlyeq \delta} \big(\langle c \rangle \cap B_{\ul{r}}\big) \to Z(\delta)\cap B_{\ul{r}} \to \S_{m_{1}}\times \ldots \times \S_{m_{q}} \to 1\]
where for each cycle $c$, we have $|\langle c \rangle \cap B_{\ul{r}}|=\val(\cyc)$,  and we find
\[|Z(\delta)\cap B_{\ul{r}}|=\Cyc! \, \prod_{\cyc \in \Cyc} \val(\cyc).\]
Combined with \eqref{eq:preimage}, this completes the proof of Lemma \ref{lem:counting}, and the proof of Theorem \ref{thm:main}.

\subsection{An integral version}\label{sec:thmmainintegral}
With the case of matrices with complex entries in mind, we wrote the above proof of Theorem \ref{thm:main} without trying to avoid divisions by integers. However, such divisions occurred only at a few places, and could have been avoided entirely.\footnote{This is an asset of this combinatorial proof compared to the transcendental one given in Section \ref{sec:alternative-proof}.} The main point, which follows from Lemma \ref{lem:counting}, is that for any multiset of cyclic walks $\Cyc \in \CMSf$, the integer $\Cyc!\prod_{\cyc\in \Cyc} \val(\cyc)$ divides $\prod_{a\in \set{p}}v_{a}(\Cyc)!$, and therefore $n_{1}!\ldots n_{p}!$. Thus, the following variant of our main result:
\begin{equation}
n_{1}!\ldots n_{p}!\det A= \sum_{\Cyc \in \CMSf} \frac{n_{1}!\ldots n_{p}!}{\Cyc!\prod_{\cyc\in \Cyc} \val(\cyc)}\ul{z}^{\ul{n}-\ul{v}(\Cyc)}  \prod_{\cyc\in \Cyc} -W(-A,\cyc)
\label{eq:dettrblockdiagintegral}
\end{equation}
involves only integer coefficients, and holds true for a matrix $A$ with entries in an arbitrary commutative ring.


\section{A transcendental approach}\label{sec:alternative-proof}

In this section, we give a second, shorter proof of Theorem \ref{thm:main}, and a then variant of this result in the style of an infinite Euler product factorization. Some of the content of this section is reminiscent of Reutenauer--Sch\"{u}tzenberger's proof \cite{RS} of Amitsur's identity for the determinant of a sum of matrices \cite{Amitsur} using Lyndon words and infinite product factorizations. 

The proof given in Section \ref{sec:main} uses only finite computations, holds for matrices with entries in an arbitrary commutative ring and, we believe, gives important insights into the combinatorics of the problem. 

The proof presented below relies on the infinitely many polynomial identities bundled in the identity
\begin{equation}\label{eq:jacobi}
\det = \exp \Tr \log
\end{equation}
that we already mentioned in \eqref{eq:exptrlog1}.
It is much shorter than the previous one, but it is valid only for matrices with entries in a field of characteristic $0$.

\subsection{A second proof of Theorem \ref{thm:main}}

Let $A$ be a block matrix of the same shape as the one that we considered in Section \ref{sec:formblockmat}, now with coefficients in a field of characteristic $0$. Let us make the assumption that the matrix $A$ has scalar block diagonal coefficients $A_{[aa]}=z_{a}I_{n_{a}}$. 
 
Our goal is to establish a formula for $\det A$, which is a polynomial function of $z_{1},\ldots,z_{p}$. However, it turns out to be more convenient to compute it as a Laurent polynomial in $z_{1},\ldots,z_{p}$, that is, to allow ourselves to consider the multiplicative inverses $z_{1}^{-1},\ldots,z_{p}^{-1}$. The obvious drawback is that some steps of the computation become meaningless in the case where some of the scalars $z_{1},\ldots,z_{p}$ are zero. Fortunately, over a field of characteristic $0$, which is infinite, a polynomial in $z_{1},\ldots,z_{p}$ is uniquely determined by its values on the set $\{z_{1}\neq 0, \ldots, z_{n}\neq 0\}$, and the formula that we find for $\det A$ holds for all values of $z_{1},\ldots,z_{p}$.
  
Let us define the three block matrices
 \[Z=\diag(z_{a}I_{n_{a}} : a\in \set{p}) , \  \ B=A-Z , \ \text{ and } C=Z^{-1}B.\]
Then $Z(I_{n}+C)=Z+B=A$, so that the following equality of Laurent polynomials in $z_{1},\ldots,z_{p}$ holds:
\[\det A = \prod_{i=1}^{p} z_{a}^{n_{a}} \, \det (I_{n}+C)\,.\]

We will now compute $\det(I_{n}+C)$, and we will first do it as a formal series in $z_{1}^{-1},\ldots,z_{p}^{-1}$. The first step of the computation is
\begin{equation} \label{eq:detexptrlog}
\det (I_n+ C)  = \exp \Tr \log \left( I_n + C \right) = \exp \bigg(-\Tr \sum_{k=1}^{\infty} \frac{(-C)^k}{k}\bigg).
\end{equation}

Recall from Section \ref{sec:cyclicwalks} the definition of a cyclic walk, its length and its valuation. For any integer $k\geq 2$, we have
\[\Tr(C^{k})=\sum_{a_{1}\in \set{p}} \Tr\big((C^{k})_{[a_{1}a_{1}]}\big)=\sum_{a_{1},\ldots,a_{k}\in \set{p}} \Tr(C_{[a_{1}a_{2}]}\ldots C_{[a_{k}a_{1}]})\]
and since the diagonal blocks of $C$ are zero, the terms in which two consecutive indices $a_{i}$ are equal (including $a_{k}$ and $a_{1}$) do not contribute. Therefore,
\[\Tr(C^{k})=\sum_{\substack{a_{1},\ldots,a_{k}\in \set{p}\\ a_{1}\neq a_{2}\neq \cdots \neq a_{k}\neq a_{1}}} W\big(C,\cy a_{1},\ldots,a_{k}\yc\big).\]
In this sum, a cyclic walk $\cyc=\cy a_{1},\ldots,a_{k}\yc$ appears as many times as there are distinct cyclic permutations of $a_{1},\ldots,a_{k}$, that is, ${k}/{\val(\cyc)}$. Thus, for all $k\geq 2$,
\[\frac{\Tr(C^{k})}{k}=\sum_{\cyc \in \cW, |\cyc|=k} \frac{W(C, \cyc)}{\val(\cyc)}\,.\]
For $k=1$, the right-hand side is zero, because cyclic walks have length at least $2$, and the right-hand side is zero because the diagonal of $C$ vanishes. Therefore, the identity still holds in this case. Thanks to this observation, we have
\begin{equation}
\label{eq:expW}
\det (I_n+ C)  =\exp \ \sum_{\cyc \in \cW} \frac{-W(-C,\cyc)}{\val(\cyc)}\, .
\end{equation}
Let us emphasize that in the sum, the term indexed by a cyclic walk $\cyc$ is a homogeneous polynomial of degree $|\cyc|$, the length of $\cyc$, in $z_{1}^{-1},\ldots,z_{p}^{-1}$. The least degree of a term of the sum is $2$, and there are only finitely many terms of degree less than any given integer. 

We will now use the equality 
\[W(-C,\cyc)=\ul{z}^{-\ul{v}(\cyc)}W(-A,\cyc),\]
valid for any cyclic walk $\cyc$, and in which we stress that $W(-A,\cyc)$ has degree $0$ in $z_{1},\ldots,z_{p}$, because it is a sum of traces of products of off-diagonal blocks of $A$.

Expanding the exponential, we find
\begin{equation}\label{eq:series-pol}
\det (I_n+C)= 1+\sum_{\ell=1}^\infty \frac{1}{\ell!} \sum_{\cyc_{1},\ldots,\cyc_{\ell}\in \cW} \prod_{j=1}^{\ell} \bigg( \ul{z}^{-\ul{v}(\cyc_{j})} \frac{W(-A,\cyc_{j})}{\val(\cyc_{j})} \bigg)\,.
\end{equation}

The term of the sum indexed by the $\ell$-tuple $(\cyc_{1},\ldots,\cyc_{\ell})$ depends only on the number of occurrences of each cyclic walk, that is, on the underlying multiset of cyclic walks. The contribution of a multiset $\Cyc$ is equal to
\[ \ul{z}^{-\ul{v}(\Cyc)} \prod_{\cyc\in\Cyc} \frac{-W(-A,\cyc)}{\val(\cyc)}.\]
Besides, a multiset $\Cyc$ of cardinality $\ell$ can be obtained from ${\ell!}/{\Cyc!}$ distinct $\ell$-tuples of cyclic walks. Therefore, denoting by $\CMS$ the set of all multisets of cyclic walks, we have established the equality
\begin{equation}\label{eq:I+Cavant}
\det (I_n+C)= \sum_{\Cyc\in \CMS}  \frac{\ul{z}^{-\ul{v}(\Cyc)} }{\Cyc!}\prod_{\cyc\in\Cyc} \frac{-W(-A,\cyc)}{\val(\cyc)}\, ,
\end{equation}
with the natural convention that the contribution of the empty multiset is $1$.

On the right-hand side of this equality, we have a formal series in $z_{1}^{-1},\ldots,z_{p}^{-1}$.
On the left-hand side, we have a polynomial in the same variables with degree at most $n_a$ in the variable~$z_a^{-1}$, for each $a\in \set{p}$. We can therefore remove all the terms of higher order on the right-hand side, to find that 
\begin{equation}\label{eq:I+Capres}
\det (I_n+ C)  =  \sum_{\Cyc\in \CMSf} \frac{\ul{z}^{-\ul{v}(\Cyc)} }{\Cyc!} \prod_{\cyc\in\Cyc} \frac{-W(-A,\cyc)}{\val(\cyc)}\,.
\end{equation}
Multiplying by the determinant of $Z$, we find
\[\det A = \ul{z}^{\ul{n}} \det(I_n+ C) =  \sum_{\Cyc\in \CMSf} \frac{\ul{z}^{\ul{n}-\ul{v}(\Cyc)}}{\Cyc!} \prod_{\cyc\in\Cyc} \frac{-W(-A,\cyc)}{\val(\cyc)}\,,\]
which, according to a remark made at the beginning of this section, concludes this proof of Theorem \ref{thm:main}.

\subsection{Prime cyclic walks and an Euler product identity}\label{sec:primeeuler}

Let us call a cyclic walk of valuation $1$ a {\em prime} cyclic walk, and let $\cP$ be the set of prime cyclic walks. There is no canonical way of defining the concatenation of two cyclic walks, because cyclic walks do not have a base point, but it is possible to define the powers of a cyclic walk : if $\cyc=\cy a_{1},\ldots,a_{k}\yc$, then the cyclic walk 
\[\cy a_{1},\ldots,a_{k},a_{1},\ldots,a_{k}\yc=\cy a_{i},\ldots,a_{k},a_{1},\ldots,a_{i-1},a_{i},\ldots,a_{k},a_{1},\ldots,a_{i-1}\yc\]
does not depend on the choice of the representation of $\cyc$, and it is denoted $\cyc^{2}$. Of course, the $m$-th power $\cyc^{m}$ is equally well defined for any integer $m\geq 1$.

With this definition, any cyclic walk $\cyc$ can be written in a unique way as $\cyc=\pcyc^{m}$, with $\pcyc$ a prime cyclic walk and $m\geq 1$ an integer, which is also the valuation of $\cyc$. Therefore, starting again from \eqref{eq:expW}, we have

\begin{align*}
\det (I_n+ C)= \exp \bigg( -\sum_{\pcyc\in \cP} \sum_{m=1}^\infty \frac{W(-C,\pcyc^m)}{m} \bigg) 
=\prod_{\pcyc\in \cP}  \exp \bigg( - \sum_{m=1}^\infty \frac{W(-C,\pcyc^m)}{m} \bigg).
\end{align*}

For each prime cyclic walk $\pcyc=\cy a_{1},\ldots,a_{k}\yc$, reading \eqref{eq:detexptrlog} from right to left, we see that the term indexed by $\pcyc$ in the product above is equal to 
\begin{align*}
\exp \bigg( - \Tr \sum_{m=1}^\infty \frac{\big((-C)_{[a_{1}a_{2}]}\ldots (-C)_{[a_{k}a_{1}]}\big)^{m}}{m} \bigg)&=
\det\big(I_{n_{1}}-(-C)_{[a_{1}a_{2}]}\ldots (-C)_{[a_{k}a_{1}]}\big)\\
&=\det\big(I_{n_{1}}-\ul{z}^{-\ul{v}(\pcyc)}(-A)_{[a_{1}a_{2}]}\ldots (-A)_{[a_{k}a_{1}]}\big).
\end{align*}
We already observed at the occasion of the definition of $W(A,\cyc)$ (see \eqref{eq:defhAcyc}) that the matrix $A_{[a_{1}a_{2}]}\ldots A_{[a_{k}a_{1}]}$ is ill-defined, yet has a well-defined trace. In fact, its characteristic polynomial is well defined. Denoting this ill-defined matrix by $\hol(-A,\pcyc)$, the polynomial in $z_1^{-1},\ldots,z_p^{-1}$ that we are computing is equal to 
\[\det\big(I-\ul{z}^{-\ul{v}(\pcyc)}\, \hol(-A,\pcyc)\big).\]

With this notation, we find that the following result holds.

\begin{proposition} \label{prop:Euler}
With the notation of Theorem \ref{thm:main}, the following identity holds:
\[\det A= \ul{z}^{\ul{n}} \prod_{\pcyc\in \cP} \det\big(I-\ul{z}^{-\ul{v}(\pcyc)}\hol(-A,\pcyc)\big)\]
as an equality of Laurent series in $z_{1}^{-1},\ldots,z_{p}^{-1}$.
\end{proposition}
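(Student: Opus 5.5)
The plan is to make rigorous the formal computation sketched just before the statement. Everything happens in the ring of formal power series in $z_1^{-1},\ldots,z_p^{-1}$, filtered by total degree. Write $A=Z(I_n+C)$ with $C=Z^{-1}B$ as in the second proof of Theorem~\ref{thm:main}, so that $\det A=\ul{z}^{\ul{n}}\det(I_n+C)$. It then suffices to prove
\[
\det(I_n+C)=\prod_{\pcyc\in\cP}\det\big(I-\ul{z}^{-\ul{v}(\pcyc)}\hol(-A,\pcyc)\big)
\]
as formal power series in $z_1^{-1},\ldots,z_p^{-1}$.

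\emph{Step 1: the product is well defined.} For a prime cyclic walk $\pcyc$, every entry of $\ul{z}^{-\ul{v}(\pcyc)}\hol(-A,\pcyc)$ is homogeneous of degree $|\pcyc|\ge 2$. Hence each factor has the form $1+(\text{terms of degree}\ge|\pcyc|)$. There are only finitely many cyclic walks of any given length, so only finitely many factors differ from $1$ modulo terms of degree $>N$. The infinite product therefore converges in the degree topology.

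\emph{Step 2: compute each factor.} Fix $\pcyc=\cy a_1,\ldots,a_k\yc$ and the representative matrix $M_\pcyc=(-C)_{[a_1a_2]}\cdots(-C)_{[a_ka_1]}$, which is a square $n_{a_1}\times n_{a_1}$ matrix with entries of degree $k$. The identity $\det=\exp\Tr\log$ holds for $I-M$ when $M$ has entries in the maximal ideal of the power series ring; this is a formal identity that follows from the scalar case via triangularisation over an algebraic closure, or by polynomial identities in the entries. It gives
\[
\det(I-M_\pcyc)=\exp\Big(-\sum_{m\ge1}\tfrac{1}{m}\Tr(M_\pcyc^m)\Big).
\]
Next, $\Tr(M_\pcyc^m)=W(-C,\pcyc^m)$, which is independent of the base point. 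Finally, $(-C)_{[ab]}=-z_a^{-1}B_{[ab]}=z_a^{-1}(-A)_{[ab]}$ for $a\ne b$, so the factor equals $\det(I-\ul{z}^{-\ul{v}(\pcyc)}\hol(-A,\pcyc))$. Its characteristic polynomial, and hence the determinant, does not depend on the base point because $\det(I-XY)=\det(I-YX)$.

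\emph{Step 3: assemble.} Start from \eqref{eq:expW}, $\det(I_n+C)=\exp\sum_{\cyc}\big(-W(-C,\cyc)/\val(\cyc)\big)$. Use the unique factorisation $\cyc=\pcyc^m$ with $m=\val(\cyc)$ to regroup the sum as $\sum_{\pcyc}\sum_{m\ge 1}$. This rearrangement is legitimate because only finitely many terms have degree at most $N$. Then use $\exp(\sum_\pcyc x_\pcyc)=\prod_\pcyc\exp(x_\pcyc)$, valid for summable families in the maximal ideal by truncating at each degree. Combined with Step 2, this yields the claim.

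The main point needing care is the uniqueness of the decomposition $\cyc=\pcyc^m$ with $m=\val(\cyc)$. This is the standard fact that a word whose rotation stabiliser in $\Z/k\Z$ has order $m$ is the $m$-th power of a primitive word of length $k/m$, and that this primitive word is unique up to rotation. One should also check that $\pcyc^m$ has no consecutive equal letters, so that it is again a cyclic walk; this holds because the last and first letters of $\pcyc$ differ. With these, the bijection between cyclic walks and pairs $(\pcyc,m)$ holds, and the regrouping is exact.
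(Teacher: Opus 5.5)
Your proposal is correct and follows essentially the same route as the paper. You start from \eqref{eq:expW}, regroup cyclic walks as $\pcyc^m$ with $\pcyc$ prime and $m=\val(\pcyc^m)$, and split the exponential into a product over $\cP$; each factor is then identified with $\det(I-M_\pcyc)$ via $\det=\exp\Tr\log$ and $(-C)_{[ab]}=z_a^{-1}(-A)_{[ab]}$. Your extra care about convergence in the degree topology and about the primitive-root decomposition spells out what the paper only asserts, partly in its comments after the statement.
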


Let us make two further comments about this result. The first is about the definition of the product. 

Consider a prime cyclic walk $\pcyc$. The determinant $\det (I-t \hol(-A,\pcyc))$ is a polynomial in~$t$ which does not depend on the choice of a base point that is necessary to specify the matrix $\hol(-A,\pcyc)$. It has degree at most $n$, constant coefficient $1$, and we can name its other coefficients, by setting
\[\det \big(I-t \hol(-A,\pcyc)\big) = 1+ \sum_{j=1}^{n} \alpha_{j}(\pcyc) t^{j}.\]
With this notation, the term of the product indexed by $\pcyc$ is
\[1+\sum_{j=1}^{n} \alpha_{j}(\pcyc) z_{1}^{-jv_{1}(\pcyc)} \ldots z_{p}^{-jv_{p}(\pcyc)}.\]
There are only finitely many cyclic walks, a fortiori prime cyclic walks, with a given number of visits at each element of $\set{p}$. Thus, the product is well defined as a formal series in $z_{1}^{-1},\ldots,z_{p}^{-1}$. 

The second comment is about the possibility of removing some terms of the product, as we did when we went from \eqref{eq:I+Cavant} to \eqref{eq:I+Capres}. Indeed, $\det A$ is a polynomial in $z_{1},\ldots,z_{p}$, of degree $\ul{n}$. A cyclic walk $\pcyc$ such that $v_{a}(\pcyc)>n_{a}$ for some $a\in \set{p}$ will only contribute terms which have negative degree in at least one of the variables $z_{1},\ldots,z_{p}$. Nevertheless, one cannot remove the corresponding terms from the product, as they collectively contribute to cancelling higher order monomials which appear from determinants which also produce admissible terms.

Let us illustrate this in the case where $p=n$ and all blocks have size $1$. In this case, Proposition \ref{prop:Euler} reads
\[\det A=A_{11}\ldots A_{nn} \prod_{\pcyc=\cy i_{1}\ldots i_{r}\yc \in \cP} \bigg(1+(-1)^{r-1}\frac{A_{i_{1}i_{2}}\ldots A_{i_{r}i_{1}}}{A_{i_{1}i_{1}}\ldots A_{i_{r}i_{r}}}\bigg).\]
In the product, consider the terms that correspond to cycle of permutations, that is, to cyclic walks that do not visit twice any vertex. There is a finite number of such terms, and expanding their product produces all the terms of the classical definition of the determinant. However, it also produces, when one multiplies \emph{overlapping} cycles, terms of higher negative degree. These terms are cancelled by terms coming from cyclic walks that are not cycles of permutations. Similar cancellations occur until the whole product is taken into account. 

We will see below however, that when the underlying graph structure of the blocks is a quiver with only a finite number of prime cycles, then the only terms in the product not equal to $1$ are finite, and thus we can indeed truncate the product in that case, see Theorem  \ref{thm:finite-euler}.

\section{Wilson loop expansion of the twisted Laplacian determinant}\label{sec:wilson-loops}

Let $\rG=(\rV,\rE)$ be a finite quiver, as described in Section \ref{sec:graphsandlaplacian}. Let us make the assumption that the graph has no self-loops, that is, that for each edge $e\in \rE$, we have $s(e)\neq t(e)$.\footnote{It is possible that this assumption may be dropped by modifying the results appropriately, but for simplicity, we work with this assumption.} For simplicity, we assume that $\rV=\set{p}$.

Let $\ul{x}=(x_e)_{e\in \rE}$ be complex weights on those oriented edges. For each vertex~$v\in \rV$, let $n_v\ge 1$ be a positive integer, and let $\ul{U}=(U_e)_{e\in \rE}$ be complex matrices indexed by edges, where for each edge $e$ joining vertex $u$ to vertex $v$, the matrix $U_e$ is of size $n_u\times n_v$. It is the matrix of an endomorphism from $\C^{n_{v}}$ to $\C^{n_{u}}$.

Recall the definition of the covariant Laplacian $\Delta$, see \eqref{eq:covLaplacian}. Let us define $n=\sum_{v\in \rV} n_{v}$, so that $\Delta$ is an endomorphism of $\C^{n}=\bigoplus_{v\in \rV} \C^{n_{v}}$. Its matrix in the canonical basis of $\C^{n}$ has a block structure with $p=|\rV|$ vertices, and block sizes $(n_{v})_{v\in \rV}$. For each $u\in \rV$, let us define
\begin{equation}\label{eq:defz}
z_{u}=\sum_{\substack{e\in \rE \\ s(e)=u}} x_{e}.
\end{equation}

The $(u,v)$ block of the matrix of $\Delta$ is equal to
\begin{equation}\label{eq:coefDelta}
\Delta_{[uv]}=\left\{\begin{array}{ll}\displaystyle \phantom{-} z_{u} I_{n_{u}} & \text{if } u=v,\\[2mm]
\displaystyle -\sum_{e:u\to v} x_{e} U_{e}& \text{if } u\neq v.
\end{array}
\right.
\end{equation}

Let us say that a cyclic walk $\cy v_{1},\ldots,v_{k} \yc$ is {\em compatible with $\rG$} if for all $i=\{1,\ldots,m\}$, the graph $\rG$ has at least one edge from $v_{i}$ to $v_{i+1}$ (with the convention $v_{m+1}=v_{1}$). Let us denote by $\CMS^{\rG}$ the subset of $\CMS$ formed by multisets of cyclic walks compatible with $\rG$. 

Since there may be more than one edge joining two vertices of $\rG$, a cyclic walk compatible with $\rG$ can correspond to several distinct cycles on $\rG$, in the sense of well-chained sequences of edges up to cyclic permutation. We define $\CMS(\rG)$ as the set of multisets of cycles on $\rG$. There is a map $f$ from the set of cycles on $\rG$ to the set of cyclic walks on $\rV$ compatible with $\rG$ which forgets the edges traversed, and only remembers the vertices visited. This map is onto, and in general, many-to-one. We use the same notation for the induced map $f:\CMS(\rG)\to \CMS^{\rG}$.

For a cycle $\cyc$ in $\rG$, which traverses the edges $e_{1},\ldots,e_{k}$ in this cyclic order, we set 
\begin{equation}\label{eq:defhol}
\hol(\cyc)=U_{e_{1}}\ldots U_{e_{k}}.
\end{equation}
The same ambiguity holds here as in the definition \eqref{eq:defhAcyc}: the matrix $\hol(\cyc)$ depends on the choice of a basepoint for the cycle $\cyc$, but its trace is well defined.

For the same cycle $\cyc$, we define $\ul{x}^{\ul{e}(\cyc)}=x_{e_{1}}\ldots x_{e_{k}}$.

\begin{lemma} \label{lem:forget} For every cyclic walk $\check\cyc$ on $\rV$, we have  
\[W(-\Delta,\check\cyc)= \sum_{\substack{\cyc \text{ cycle on } \rG \\ f(\cyc)= \check\cyc}}  \ul{x}^{\ul{e}(\cyc)} \Tr\hol(\cyc).\]
\end{lemma}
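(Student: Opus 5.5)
The plan is a direct expansion of the trace defining $W(-\Delta,\check\cyc)$, followed by matching the resulting terms with the cycles of $\rG$ lying over $\check\cyc$. Write $\check\cyc=\cy v_{1},\ldots,v_{k}\yc$ and fix this representative, that is, the base point $v_{1}$. By \eqref{eq:defhAcyc}, $W(-\Delta,\check\cyc)=\Tr\big((-\Delta)_{[v_{1}v_{2}]}\cdots(-\Delta)_{[v_{k}v_{1}]}\big)$. Consecutive vertices of a cyclic walk are distinct, so only off-diagonal blocks occur. By \eqref{eq:coefDelta}, each such block is $(-\Delta)_{[v_{i}v_{i+1}]}=\sum_{e:v_{i}\to v_{i+1}}x_{e}U_{e}$, and the two minus signs cancel.

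\emph{Step 1 (multilinear expansion).} Expanding the product by multilinearity and using linearity of the trace, I will obtain
\[
W(-\Delta,\check\cyc)=\sum_{(e_{1},\ldots,e_{k})} x_{e_{1}}\cdots x_{e_{k}}\,\Tr\big(U_{e_{1}}\cdots U_{e_{k}}\big),
\]
where the sum runs over the tuples with $e_{i}:v_{i}\to v_{i+1}$ for every $i$, indices taken modulo $k$. Each such tuple is a closed chain of edges, so it defines a cycle $\cyc$ on $\rG$ with $f(\cyc)=\check\cyc$. For this cycle, the summand is exactly $\ul{x}^{\ul{e}(\cyc)}\Tr\hol(\cyc)$ by \eqref{eq:defhol} and the definition of $\ul{x}^{\ul{e}(\cyc)}$. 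Cyclic invariance of the trace guarantees that this value does not depend on where the chain is cut.

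\emph{Step 2 (matching tuples with cycles).} Next I will show that the map from admissible tuples to cycles over $\check\cyc$ is onto. Given a cycle $\cyc$ with $f(\cyc)=\check\cyc$, its vertex sequence is a rotation of $(v_{1},\ldots,v_{k})$. Rotating the edge sequence of $\cyc$ accordingly produces an admissible tuple that represents $\cyc$.

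It then remains to compare the two sums term by term, which requires knowing how many admissible tuples represent a given cycle $\cyc$. Two admissible tuples represent the same cycle exactly when they differ by a rotation $j\in\Z/k\Z$ that fixes the vertex word $(v_{1},\ldots,v_{k})$. Such rotations form the stabiliser of the word, which has order $\val(\check\cyc)$. Among these rotations, those that also fix the edge tuple form a subgroup of order $\val(\cyc)$. Hence each cycle $\cyc$ is represented by exactly $\val(\check\cyc)/\val(\cyc)$ admissible tuples.

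\emph{Main obstacle.} The delicate point is the case $\val(\check\cyc)>1$. When $\val(\check\cyc)=1$, and in particular whenever $\check\cyc$ is prime, every fibre has exactly one element. The expansion of Step~1 is then term by term the stated sum over cycles $\cyc$ with $f(\cyc)=\check\cyc$, and the lemma follows.

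When $\val(\check\cyc)>1$, the count above shows that the expansion weights each cycle by $\val(\check\cyc)/\val(\cyc)$, so the stated equality cannot hold in general. A counterexample is $\check\cyc=\cy 1,2,1,2\yc$, with two edges $a,b$ from $1$ to $2$ and a single edge $d$ from $2$ to $1$. Here $W(-\Delta,\check\cyc)$ contains the cycle $(a\,d\,b\,d)$ twice: once from the tuple $(a,d,b,d)$ and once from the tuple $(b,d,a,d)$. The stated right-hand side counts that cycle only once.

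So the statement, as worded, holds only for prime $\check\cyc$. For general $\check\cyc$, the correct identity carries the factor $\val(\check\cyc)/\val(\cyc)$ in the sum over $\cyc$. I expect this is the form the later results actually need: after dividing by $\val(\check\cyc)$ in Theorem~\ref{thm:main}, each cycle $\cyc$ of $\rG$ appears with weight $1/\val(\cyc)$, which matches Corollary~\ref{coro:detDelta}.
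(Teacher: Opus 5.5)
Your Step~1 is the paper's proof: it replaces the blocks by \eqref{eq:coefDelta}, expands the product, and recognizes the traces via \eqref{eq:defhol}. Your objection to the final identification is correct. It exposes a flaw in the statement, not in your argument.

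The terms of the expansion are indexed by edge tuples lying over a \emph{fixed} based representative $(v_{1},\ldots,v_{k})$ of $\check\cyc$. The paper, however, defines cycles on $\rG$ as edge sequences \emph{up to cyclic permutation}. Its proof tacitly treats these two kinds of objects as the same. Your orbit--stabiliser count correctly shows that the correspondence is $\val(\check\cyc)/\val(\cyc)$-to-one.

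Your counterexample checks out. With $n_{1}=n_{2}=1$ and all $x_{e}=U_{e}=1$, the left-hand side is $\big((x_{a}+x_{b})x_{d}\big)^{2}=4$, while the stated right-hand side is $3$. In general the discrepancy is $x_{a}x_{b}x_{d}^{2}\Tr(U_{a}U_{d}U_{b}U_{d})$.

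One small overstatement: the literal statement does not hold \emph{only} for prime $\check\cyc$. It holds for a given $\check\cyc$ whenever every lift satisfies $\val(\cyc)=\val(\check\cyc)$. This happens, for instance, when there is a single edge between each pair of consecutive vertices of $\check\cyc$. So the failure requires multiple edges, which the paper explicitly allows.

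Your corrected identity can be written equivalently as
\[
\frac{W(-\Delta,\check\cyc)}{\val(\check\cyc)}=\sum_{\substack{\cyc \text{ cycle on } \rG \\ f(\cyc)= \check\cyc}} \frac{\ul{x}^{\ul{e}(\cyc)}\Tr\hol(\cyc)}{\val(\cyc)},
\]
and this is the version the paper actually needs. Plug it into Corollary~\ref{coro:thm-main} with $A=\Delta$, and set $w(\cyc)=-\ul{x}^{\ul{e}(\cyc)}\Tr\hol(\cyc)/\val(\cyc)$. Then expand each factor $\frac{1}{m!}\big(\sum_{\cyc\in f^{-1}(\check\cyc)}w(\cyc)\big)^{m}$ by the multinomial formula, which converts $\check\Cyc!$ into $\Cyc!$.

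This yields Theorem~\ref{thm:polcarDelta} and Corollary~\ref{coro:detDelta}, with $\val(\cyc)$ read as the valuation of $\cyc$ as a cyclic sequence of \emph{edges}. The same weight comes directly out of the $\exp\Tr\log$ computation of Section~\ref{sec:alternative-proof}. The lemma as worded would instead put $\val(f(\cyc))$ in those formulas, giving the cycle $(a\,d\,b\,d)$ only half its correct coefficient.

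So your proposal has no gap. It repairs one in the paper: the final formulas there are right, but the stated lemma does not produce them.
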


\begin{proof} The left-hand side is defined by \eqref{eq:defhAcyc}. Replacing the blocks of $\Delta$ by their definition \eqref{eq:coefDelta} and expanding the product of sums, we arrive at a sum of traces that we recognize thanks to~\eqref{eq:defhol}.
\end{proof}

Given a multiset $\Cyc\in \CMS(\rG)$,  we keep the notation $\Cyc!$ for the product of the factorials of the multiplicities of the elements of $\Cyc$, and $\ul{v}(\Cyc)$ for the vector of number of visits of the elements of $\Cyc$ at each vertex of $\rG$. Similarly, for an element $\cyc\in \Cyc$, we keep the notation $\val(\cyc)$ for its valuation. We introduce the notation $\ul{e}(\Cyc)$ \label{def:e(C)}as the vector indexed by~$\rE$, the component corresponding to an edge being the number of times where this edge is traversed by an element of $\Cyc$, taking multiplicity into account.

\begin{theorem} \label{thm:polcarDelta} Let $T=\diag(t_{u}I_{n_{u}}:u\in \rV)$. The following equality holds:
\begin{equation*}
\det(T+\Delta)=\sum_{\ul{0}\leq \ul{k}\leq \ul{n}} \ul{t}^{\ul{k}}\,  \sum_{\substack{\Cyc \in \CMS(\rG) \\ \ul{v}(\Cyc)+\ul{k}\leq \ul{n}}}
\binom{n_{1}-v_{1}(\Cyc)}{k_{1}}\ldots \binom{n_{p}-v_{p}(\Cyc)}{k_{p}}
 \frac{\ul{z}^{\ul{n}-\ul{k}-\ul{v}(\Cyc)} }{\Cyc!} \, \ul{x}^{\ul{e}(\Cyc)}
 \prod_{\cyc\in \Cyc} \frac{-\Tr \hol(\cyc)}{\val(\cyc)}.
 \end{equation*}
 \end{theorem}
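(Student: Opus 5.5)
The plan is to deduce Theorem~\ref{thm:polcarDelta} from Corollary~\ref{coro:thm-main}, applied to the block matrix $A=\Delta$. By \eqref{eq:coefDelta}, and because $\rG$ has no self-loops, the diagonal blocks of $\Delta$ are the scalars $z_uI_{n_u}$, with $z_u$ given by \eqref{eq:defz}. So Corollary~\ref{coro:thm-main} applies directly. It expresses $\det(T+\Delta)$ as a sum over $\ul{k}$ and over multisets $\check\Cyc\in\CMS$ of cyclic walks on $\rV$ with $\ul{v}(\check\Cyc)+\ul{k}\le\ul{n}$. The summand is the stated binomial coefficients times $\ul{z}^{\ul{n}-\ul{k}-\ul{v}(\check\Cyc)}/\check\Cyc!$ times $\prod_{\check\cyc\in\check\Cyc}-W(-\Delta,\check\cyc)/\val(\check\cyc)$. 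What remains is to rewrite this sum over multisets of cyclic walks on $\rV$ as a sum over multisets of cycles on $\rG$.

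First, by Lemma~\ref{lem:forget}, $W(-\Delta,\check\cyc)$ vanishes unless $\check\cyc$ is compatible with $\rG$. So we may restrict to $\check\Cyc\in\CMS^{\rG}$, and then expand each factor as $\sum_{f(\cyc)=\check\cyc}\ul{x}^{\ul{e}(\cyc)}\Tr\hol(\cyc)$. Since $f$ preserves the visit vector $\ul{v}$, the powers of $\ul{z}$ and the binomial coefficients depend only on $\check\Cyc$, and therefore only on any lift of it. The whole task thus reduces to a weighted counting identity: for each fixed $\check\Cyc$,
\[
\frac{1}{\check\Cyc!}\prod_{\check\cyc\in\check\Cyc}\frac{1}{\val(\check\cyc)}\sum_{f(\cyc)=\check\cyc}\ul{x}^{\ul{e}(\cyc)}\Tr\hol(\cyc)
\;=\;\sum_{\substack{\Cyc\in\CMS(\rG)\\ f(\Cyc)=\check\Cyc}}\frac{\ul{x}^{\ul{e}(\Cyc)}}{\Cyc!}\prod_{\cyc\in\Cyc}\frac{\Tr\hol(\cyc)}{\val(\cyc)},
\]
where the product on the left is expanded over an ordered listing of the elements of $\check\Cyc$ with multiplicity.

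This identity is the main obstacle, since valuations and multiplicity factorials change under lifting. For instance, $\check\cyc=\cy1,2,1,2\yc$ has valuation $2$, but a lift using two different edges from $1$ to $2$ has valuation $1$. I would avoid case analysis by passing to based objects. First, every cyclic walk $\check\cyc$ of length $k$ has $k/\val(\check\cyc)$ based representatives, that is, linear sequences up to no symmetry, and likewise every cycle $\cyc$ on $\rG$ has $|\cyc|/\val(\cyc)$ based representatives. The weight $\ul{x}^{\ul{e}}\Tr\hol$ is constant on each class. Second, based edge-sequences lying over a given based vertex-sequence correspond bijectively to choices of an edge at each step. Combining these, $\frac{1}{\val(\check\cyc)}\sum_{f(\cyc)=\check\cyc}(\cdots)$ equals $\frac{1}{k}\sum(\text{based lifts})$, which in turn equals $\sum_{f(\cyc)=\check\cyc}\frac{1}{\val(\cyc)}(\cdots)$. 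This is exactly the identity at the level of single cycles.

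For multisets, I would use the same device as in the transcendental proof leading to \eqref{eq:I+Cavant}. Write $\frac{1}{\Cyc!}$ times a product over a multiset $\Cyc$ of cardinality $\ell$ as $\frac{1}{\ell!}$ times the sum over the $\ell!/\Cyc!$ ordered $\ell$-tuples representing it. Ordered tuples of cycles lying over ordered tuples of cyclic walks then factor cycle by cycle, and the single-cycle identity concludes.

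Alternatively, and perhaps more cleanly, I would redo the second proof of Theorem~\ref{thm:main} directly at the level of $\rG$. Starting from \eqref{eq:expW} with $C=Z^{-1}\Delta'$, where $\Delta'$ is the off-diagonal part, one rewrites $\Tr(C^k)/k$ as a sum over cycles on $\rG$ of length $k$ divided by their valuation, using Lemma~\ref{lem:forget} and the based-representative count. Expanding the exponential over multisets of cycles of $\rG$ gives a formula of the shape of Theorem~\ref{thm:main} with $\CMS(\rG)$ in place of $\CMSf$. The sign works out because $-W(-\Delta,\cdot)$ gives $-\ul{x}^{\ul{e}}\Tr\hol$, since the off-diagonal blocks of $-\Delta$ are $+\sum x_eU_e$. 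Truncating by degree in $z_a^{-1}$ gives the polynomial formula. Finally, replacing $\ul{z}$ by $\ul{z}+\ul{t}$ and expanding binomially, as in the proof of Corollary~\ref{coro:thm-main}, yields the stated expression for $\det(T+\Delta)$.
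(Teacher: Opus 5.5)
Your overall route is the paper's: apply Corollary \ref{coro:thm-main} to $A=\Delta$, then lift multisets of cyclic walks on $\rV$ to multisets of cycles on $\rG$. Your ordered-tuple argument for multisets is also fine. The step that fails is the single-cycle identity. Write $w(\cyc)=\ul{x}^{\ul{e}(\cyc)}\Tr\hol(\cyc)$. You expand $W(-\Delta,\check\cyc)$ as $\sum_{f(\cyc)=\check\cyc}w(\cyc)$, each cycle counted once, as in Lemma \ref{lem:forget}. You then assert $\frac{1}{\val(\check\cyc)}\sum_{f(\cyc)=\check\cyc}w(\cyc)=\sum_{f(\cyc)=\check\cyc}\frac{w(\cyc)}{\val(\cyc)}$. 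This is false whenever some lift has smaller valuation than $\check\cyc$, which is exactly the situation you flagged. For example, take edges $e,e'\colon 1\to 2$ and $g\colon 2\to 1$, with $n_1=n_2=2$, and let $\check\cyc=\cy 1,2,1,2\yc$. Its lifts are $\cyc_1=\cy e,g,e,g\yc$ and $\cyc_2=\cy e',g,e',g\yc$, both of valuation $2$, and $\cyc_3=\cy e,g,e',g\yc=\cy e',g,e,g\yc$, of valuation $1$. Your left side is $\tfrac12(w_1+w_2+w_3)$, while the right side is $\tfrac12 w_1+\tfrac12 w_2+w_3$.

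The error is in the expansion of $W(-\Delta,\check\cyc)$. Expanding the block product at a fixed base point sums over edge choices above one fixed based vertex sequence. A cycle $\cyc$ lying over $\check\cyc$ has exactly $\val(\check\cyc)/\val(\cyc)$ distinct based representatives above that sequence. So in fact $W(-\Delta,\check\cyc)=\sum_{f(\cyc)=\check\cyc}\frac{\val(\check\cyc)}{\val(\cyc)}\,w(\cyc)$; in the example this gives $w_1+w_2+2w_3$, since both $(e,g,e',g)$ and $(e',g,e,g)$ lie above $(1,2,1,2)$. Lemma \ref{lem:forget} as stated omits this factor, and the paper's one-line proof relies on it too.

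The repair is already in your based-representative count, provided you never substitute the once-each expansion. Directly, $\frac{W(-\Delta,\check\cyc)}{\val(\check\cyc)}$ equals $\frac1k$ times the sum over all based lifts, which equals $\sum_{f(\cyc)=\check\cyc}\frac{w(\cyc)}{\val(\cyc)}$. Feeding this into your multiset argument proves the theorem as stated; in the example it correctly yields $\det\Delta=(z_1z_2-(\alpha+\beta)\gamma)^2$ for scalar holonomies.

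Your second route is likewise sound if you expand $\Tr(C^k)$ directly over based closed edge-walks of length $k$, each cycle $\cyc$ appearing $k/\val(\cyc)$ times. Do not first group by cyclic walks and then invoke Lemma \ref{lem:forget}.
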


\begin{proof} We apply Corollary \ref{coro:main} and use Lemma \ref{lem:forget} for each term of the product.
\end{proof}

Setting $T=0$ yields the determinant of $\Delta$.

\begin{corollary} \label{coro:detDelta}
We have
\begin{equation}
\det \Delta= \sum_{\Cyc \in\CMS(\rG)_{\leq \ul{n}}} \frac{\ul{z}^{\ul{n}-\ul{v}(\Cyc)} }{\Cyc!} 
 \prod_{\cyc\in \Cyc} \frac{-\ul{x}^{\ul{e}(\cyc)}\Tr \hol(\cyc)}{\val(\cyc)}\, .
\end{equation}
\end{corollary}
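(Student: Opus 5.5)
The plan is to rerun the transcendental proof of Theorem~\ref{thm:main} given in Section~\ref{sec:alternative-proof}, taking as combinatorial objects cycles on $\rG$ (cyclic sequences of edges) rather than cyclic walks on $\set{p}$. I will not try to regroup the terms of \eqref{eq:dettrblockdiag} via Lemma~\ref{lem:forget}. The reason is that the weight $1/(\Cyc!\prod\val(\cyc))$ attached to a multiset of cyclic walks does not transfer directly to multisets of cycles: a cycle $\cyc$ on $\rG$ can have strictly smaller valuation than its image $f(\cyc)$, since only $\val(\cyc)\mid\val(f(\cyc))$ holds. Working directly with edges avoids this bookkeeping.

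First, as in Section~\ref{sec:alternative-proof}, I work with Laurent series in $z_1^{-1},\ldots,z_p^{-1}$. I write $\Delta=Z(I_n+C)$ with $Z=\diag(z_uI_{n_u})$ and $C=Z^{-1}(\Delta-Z)$. By \eqref{eq:coefDelta}, $C$ has zero diagonal blocks and off-diagonal blocks $C_{[uv]}=-z_u^{-1}\sum_{e:u\to v}x_eU_e$. This is where the no-self-loop assumption is used: the diagonal blocks of $\Delta$ are exactly $z_uI_{n_u}$. I then expand
\[\det(I_n+C)=\exp\Big(-\sum_{k\ge1}\tfrac{1}{k}\Tr\big((-C)^k\big)\Big).\]
Expanding $\Tr((-C)^k)$ blockwise and distributing the sums over edges, I get a sum over based closed edge paths $(e_1,\ldots,e_k)$ in $\rG$ of $\ul{z}^{-\ul{v}}\,x_{e_1}\cdots x_{e_k}\Tr(U_{e_1}\cdots U_{e_k})$. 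The two minus signs cancel, because $-C$ has off-diagonal blocks $+z_u^{-1}\sum x_eU_e$.

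Second, I regroup based paths into cycles. A cycle $\cyc$ of length $k$ arises from exactly $k/\val(\cyc)$ based paths, where $\val(\cyc)$ is the valuation of the cyclic edge sequence. This gives
\[\det(I_n+C)=\exp\sum_{\cyc}\frac{-\ul{z}^{-\ul{v}(\cyc)}\ul{x}^{\ul{e}(\cyc)}\Tr\hol(\cyc)}{\val(\cyc)}.\]
The sign is correct: the exponent is $-\frac1k\Tr((-C)^k)$, and $\Tr((-C)^k)$ is a sum of positive-sign terms. Here I only need that the cyclic group $\Z/k\Z$ acts on based representatives with stabiliser of order $\val(\cyc)$, which is the same orbit count as in Section~\ref{sec:alternative-proof}.

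Third, I expand the exponential exactly as in \eqref{eq:series-pol}--\eqref{eq:I+Cavant}. An ordered $\ell$-tuple of cycles with underlying multiset $\Cyc$ occurs $\ell!/\Cyc!$ times, which produces $\sum_{\Cyc\in\CMS(\rG)}\frac{\ul{z}^{-\ul{v}(\Cyc)}}{\Cyc!}\prod_{\cyc\in\Cyc}\frac{-\ul{x}^{\ul{e}(\cyc)}\Tr\hol(\cyc)}{\val(\cyc)}$. Since $\det(I_n+C)$ is a polynomial of degree at most $n_a$ in each $z_a^{-1}$, the terms with some $v_a(\Cyc)>n_a$ must cancel and can be discarded. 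Multiplying by $\det Z=\ul{z}^{\ul{n}}$ then gives the claimed formula, first for nonzero $z_a$ and then in general by polynomial identity.

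The main obstacle is the valuation bookkeeping in the second step: checking that valuations of edge cycles, not of vertex walks, are what appear. I expect this to be fine because the orbit-counting argument is insensitive to whether letters are vertices or edges. As a cross-check, the case $T=0$ of Theorem~\ref{thm:polcarDelta} should give the same expression.
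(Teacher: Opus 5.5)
Your argument is correct, but it follows a different route from the paper.

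The paper obtains the corollary as the case $T=0$ of Theorem~\ref{thm:polcarDelta}. It applies the vertex-level Theorem~\ref{thm:main} (in its characteristic-polynomial form) to the block matrix $\Delta$, and then expands each $W(-\Delta,\check\cyc)$ into edge cycles via Lemma~\ref{lem:forget}. You instead redo the $\exp\Tr\log$ computation of Section~\ref{sec:alternative-proof} directly on based closed edge paths. In your version the edge-cycle valuations come straight out of the $\Z/k\Z$ orbit count.

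Your worry about the lifting step is justified but not fatal. Expanding $W(-\Delta,\check\cyc)$ over lifts of a fixed based representative of $\check\cyc$ yields each edge cycle $\cyc$ with $f(\cyc)=\check\cyc$ exactly $\val(\check\cyc)/\val(\cyc)$ times, not once as Lemma~\ref{lem:forget} literally states. For instance, with parallel edges $e,e'\colon 1\to 2$ and $g\colon 2\to 1$, the cycle $\cy e,g,e',g\yc$ occurs twice in $W(-\Delta,\cy 1,2,1,2\yc)$. Hence $W(-\Delta,\check\cyc)/\val(\check\cyc)=\sum_{f(\cyc)=\check\cyc}\ul{x}^{\ul{e}(\cyc)}\Tr\hol(\cyc)/\val(\cyc)$. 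The multinomial theorem then turns $1/\check\Cyc!$ into $1/\Cyc!$, so the paper's route does land on edge valuations once this factor is included.

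The two approaches trade off as follows:
\begin{itemize}
\item The paper's route is shorter once Theorem~\ref{thm:main} is available, and it can rest on that theorem's finite, ring-independent proof.
\item Yours is self-contained and avoids the lifting bookkeeping altogether, at the cost of needing characteristic $0$.
\end{itemize}

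One point to make explicit in your write-up concerns the truncation step, which compares coefficients in $z_a^{-1}$. For that comparison the $z_a$ must be treated as indeterminates independent of the $x_e$, with the off-diagonal blocks held fixed. Only at the end should you specialise to $z_a=\sum_{s(e)=a}x_e$.
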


The Laplacian determinant can be seen as a partition function for matter in a gauge field in the context of lattice gauge theory \cite{KL1,KL7}. With Corollary \ref{coro:detDelta} above, we can express moments of this partition function as linear combinations of certain Wilson loop, that is, expectations of products of traces of holonomies along cycles. 

\begin{theorem}\label{thm:gauge-theory}
Consider any probability distribution on the quiver representation for fixed $\ul{n}$. Then, for any integer $k\ge 1$, we have
\begin{equation*}
    \bE \left[\left(\det\Delta\right)^k \right] = \sum_{\Cyc_1, \ldots, \Cyc_k \in \CMS(\rG)_{\le \ul{n}}}
    \underbrace{\left(\prod_{i=1}^k \bigg[\frac{\ul{z}^{\ul{n}-\ul{v}(\Cyc_i)} }{\Cyc_i!} 
 \prod_{\cyc_i\in \Cyc_i} \frac{-\ul{x}^{\ul{e}(\cyc_i)}}{\val(\cyc_i)}\bigg] \right)}_{\text{Combinatorial weight}} \underbrace{\bE \left[\prod_{i=1}^k \prod_{\cyc_i\in \Cyc_i}\Tr \hol(\cyc_i) \right]}_{\text{Wilson loop}}\,.
\end{equation*}
\end{theorem}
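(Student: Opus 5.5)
The plan is to deduce Theorem~\ref{thm:gauge-theory} directly from Corollary~\ref{coro:detDelta}. The strategy is to raise that expansion to the $k$-th power, distribute the product, and then use linearity of expectation. The structural point that makes this work is a separation of variables. The combinatorial prefactors $\ul{z}^{\ul{n}-\ul{v}(\Cyc)}$, $\Cyc!$, $\ul{x}^{\ul{e}(\cyc)}$ and $\val(\cyc)$ depend only on the fixed edge weights $\ul{x}$, through $z_u=\sum_{s(e)=u}x_e$, and on the combinatorics of $\Cyc$. They do not depend on the representation $\ul{U}$. Only the factors $\Tr\hol(\cyc)$ are random.

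First, I would write $\det\Delta=\sum_{\Cyc\in\CMS(\rG)_{\le\ul{n}}} w(\Cyc)\,H(\Cyc)$. Here $w(\Cyc)=\frac{\ul{z}^{\ul{n}-\ul{v}(\Cyc)}}{\Cyc!}\prod_{\cyc\in\Cyc}\frac{-\ul{x}^{\ul{e}(\cyc)}}{\val(\cyc)}$ and $H(\Cyc)=\prod_{\cyc\in\Cyc}\Tr\hol(\cyc)$. This is just a regrouping of Corollary~\ref{coro:detDelta}, valid pointwise for every realization of $\ul{U}$.

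Second, I would check that the sum is finite. Every cycle visits at least one vertex, so the constraint $v_a(\Cyc)\le n_a$ bounds both the number of cycles in $\Cyc$ and their lengths by $\sum_a n_a$. Since $\rG$ is finite, only finitely many cycles of bounded length exist. Hence $\CMS(\rG)_{\le\ul{n}}$ is finite, and $\det\Delta$ is a finite sum of the random variables $H(\Cyc)$.

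Third, I would take the $k$-th power by multiplying out $k$ copies of this finite sum. This gives $(\det\Delta)^k=\sum_{\Cyc_1,\dots,\Cyc_k}\prod_i w(\Cyc_i)\,\prod_i H(\Cyc_i)$. Taking expectations term by term is legitimate because the sum is finite and the weights are deterministic, which yields exactly the stated formula. The only care needed is integrability. Each $\Tr\hol(\cyc)$ is a polynomial in the matrix entries of $\ul{U}$, so I would either assume the relevant moments are finite, which is implicit in the statement, or note that both sides are finite simultaneously. In the motivating unitary case everything is bounded and there is no issue.

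There is no real obstacle here. The only points to state explicitly are the finiteness of the index set and the fact that $\ul{z}$ is deterministic; both are immediate from the definitions.
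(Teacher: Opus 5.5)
Your proposal is correct and follows the paper's argument: the paper gives no separate proof and treats Theorem~\ref{thm:gauge-theory} as an immediate consequence of Corollary~\ref{coro:detDelta}, obtained by raising it to the $k$-th power and using linearity of expectation with deterministic weights, exactly as you do (your check that $\CMS(\rG)_{\le\ul{n}}$ is finite is a welcome addition). One small correction: your aside that ``both sides are finite simultaneously'' only holds in one direction, since cancellations between terms can make $(\det\Delta)^k$ integrable while some individual Wilson loop terms are not, so the right formulation is simply to assume, as the statement implicitly does, that the relevant moments of products of holonomy traces are finite.
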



\section{Discrete vector fields interpretation}\label{sec:vector-fields}

In this section, we explain ways in which one can interpret the combinatorial weights appearing in the main results of Section \ref{sec:wilson-loops}. Our goal is to replace the monomials in $z_a$ by linear combinations of monomials in the weights $x_e$ and describe the new coefficients combinatorially.

To start off, let us warm-up by explaining the well-understood $\ul{n}=\ul{1}$ case. 

\subsection{A classical theorem of Forman}\label{sec:warmup}

Let us show how Corollary \ref{coro:detDelta} implies \cite[Eq. (1)]{Forman}. In the case where $p=n$, so that $n_1=\ldots=n_p=1$, Corollary \ref{coro:detDelta}  reads
\[\det \Delta = \sum_{\Cyc \in \CMS(\rG)_{\leq \ul{1}}} (-1)^{\vert \Cyc\vert} \hspace{-2mm} \prod_{\substack{v\in \rV \text{ not} \\ \text{covered by } \Cyc}}\hspace{-4mm} z_{v} \hspace{4mm}
 \prod_{\cyc\in\Cyc} \ul{x}^{\ul{e}(\cyc)} \hol(\cyc)\,.\]
Note that in that case, $ \CMS(\rG)_{\leq \ul{1}}$ consists in sets of disjoint and non-self-crossing cyclic walks.

Then, by a resummation argument ``\emph{à la Zeilberger}'' \cite{Zeilberger} that is explained in \cite{KL2}, we may rewrite this sum as
\begin{align}\label{eq:forman-zeilberger}
\det \Delta & = \sum_{\Cyc \in \CMS(\rG)_{\leq\ul{1}}}  
\bigg( \prod_{\substack{v\in \rV \text{ not} \\ \text{covered by } \Cyc}} \hspace{2mm} 
\sum_{\substack{e\in \rE\\s(e)=v}} x_{e} \bigg) \prod_{\cyc\in \Cyc} -  \ul{x}^{\ul{e}(\cyc)} \hol(\cyc)\nonumber \\
& = \sum_{F \in \cU} \ul{x}^{F} \prod_{\cyc\in \mathcal C (F)} \big(1-\hol(\cyc)\big)\,,
\end{align}
where $\cU$ is the set of subsets of $\rE$ containing exactly one edge starting from each vertex\footnote{If $\cU=\varnothing$, it means that $\rG$ has an isolated vertex, and thus $\det \Delta=0$, which is consistent with the sum being empty.} and for each~$F\in \cU$, we let $\mathcal C(F)$ is the set of simple cycles on $\rG$ that can be obtained using only the edges of~$F$.

This last equality is the matrix-tree theorem of Zaslavsky--Chaiken--Forman--Kenyon \cite{Zaslavsky, Chaiken, Forman, Kenyon} expressing the determinant of the covariant Laplacian as a sum over discrete vector fields of a weight proportional to the $1$ minus the trace of the holonomy of its limit-cycles.

Let us now consider the general $\ul{n}$ case.

\subsection{Discrete vector fields}

Let us introduce some notation. We define, for each $a\in \set{p}$, the set $\rE_{a}=\{e\in \rE : s(e)=a\}$ of edges starting from $a$, and, for each $\ul{m}\leq \ul{n}$, the set
\begin{equation}\mathscr E_{\ul{m}}=\{X \text{ multiset of elements of } \rE : \forall a\in \rV, \|X\cap \rE_{a}\|=m_a\}
\end{equation}
of $\ul{m}$-vector fields on $\rG$. Here and thereafter, we use the notation $\|\cdot\|$ for the cardinality with multiplicity of a multiset.

Let us define the set of ordered stacks of edges
\[\Xi=\big\{\xi:\set{n} \to \rE \ \text{ such that } \ \forall i\in \set{n}, \ \xi(i)\in \rE_{\bl(i)}\big\}\,.\]

For each $\xi\in \Xi$, we define the set of well-chained permutations of $\xi$:
\[\Sigma(\xi)=\big\{\sigma\in \S_{n} : \forall i\in\set{n}, \, \sigma(i)=i \text{ or }t(\xi(i))=\bl(\sigma(i))\big\}.\]
Given $\sigma\in \Sigma(\xi)$, we let $C(\sigma)$ be the set of its cycles of length at least $2$.

\begin{theorem}\label{thm:formanN}\label{thm:forman-zeilberger-vector-fields}
We have
\begin{equation}\label{eq:vector-fields}
\det \Delta= \frac{1}{n_{1}!\ldots n_{p}!}\sum_{\xi\in \Xi} \ul{x}^{\xi} \bigg[\sum_{\sigma \in \Sigma(\xi)} \prod_{a=1}^{p}  (n_{a}-v_{a}(\sigma))!
\prod_{\cyc \in C(\sigma)} -\Tr \hol(\cyc)\bigg]\,.
\end{equation}
\end{theorem}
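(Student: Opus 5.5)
The plan is to start from the intermediate identity \eqref{eq:interessante} in the first proof of Theorem~\ref{thm:main}, not from Corollary~\ref{coro:detDelta}. That identity is still indexed by labelled subsets $R\subseteq\set{n}$ and kinematic permutations $\delta\in K(R)$, which is the right level of labelling to match $\Xi$ and $\Sigma(\xi)$. The diagonal blocks of $\Delta$ are $z_aI_{n_a}$ with $z_a=\sum_{e\in\rE_a}x_e$, so \eqref{eq:interessante} applies to $A=\Delta$:
\[
\det\Delta=\sum_{R\subseteq\set{n}}\prod_{a=1}^p\frac{(n_a-|R|_a)!}{n_a!}\,z_a^{n_a-|R|_a}\sum_{\delta\in K(R)}\prod_{c\preccurlyeq\delta}-W(-\Delta,c).
\]
I will expand both kinds of factors into monomials in the $x_e$ and then reorganise the sum.

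\textbf{Expanding the two kinds of factors.} For the diagonal factors, I write $z_a^{n_a-|R|_a}=\prod_{i\in I_a\setminus R}\sum_{e\in\rE_a}x_e$. This amounts to choosing, for each unremoved label $i\in I_a\setminus R$, an edge $\xi(i)\in\rE_{\bl(i)}$. For a cycle $c=(i_1\cdots i_r)$ of $\delta$, the off-diagonal blocks of $-\Delta$ are $\sum_{e:u\to v}x_eU_e$ by \eqref{eq:coefDelta}. Expanding $W(-\Delta,c)$ as in Lemma~\ref{lem:forget} then gives a sum over choices of edges $\xi(i_k)$ from $\bl(i_k)$ to $\bl(i_{k+1})$. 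The corresponding term is $\prod_k x_{\xi(i_k)}$ times the trace of the holonomy along the resulting cycle of $\rG$. So each cycle of $\delta$ contributes $-\ul{x}^{\xi|_{c}}\Tr\hol(\cyc)$, where $\cyc$ is the cycle of $\rG$ traced by these edges.

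\textbf{The bijection.} After both expansions, every term is indexed by a triple $(R,\delta,\xi)$. Here $\xi\in\Xi$, and $\xi$ satisfies $t(\xi(i))=\bl(\delta(i))$ for every $i\in R$. I will show that the map sending $(R,\delta,\xi)$ to $(\xi,\sigma)$ is a bijection onto the pairs with $\xi\in\Xi$ and $\sigma\in\Sigma(\xi)$. Here $\sigma$ is defined by $\sigma=\delta$ on $R$ and $\sigma=\mathrm{id}$ off $R$.
\begin{itemize}
\item The map lands in $\Sigma(\xi)$ directly from the constraint on $\xi$.
\item For the inverse, take $\sigma\in\Sigma(\xi)$ and let $R$ be its set of non-fixed points and $\delta=\sigma|_R$. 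The no-self-loop assumption gives $\bl(\sigma(i))=t(\xi(i))\neq s(\xi(i))=\bl(i)$ for $i\in R$. Hence $\delta$ is kinematic, and every cycle in $C(\sigma)$ is a cycle of $\delta$.
\item Under this correspondence $|R|_a=v_a(\sigma)$, the number of non-fixed points of $\sigma$ in $I_a$.
\end{itemize}

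\textbf{Collecting the factors.} The weight of a pair $(\xi,\sigma)$ is then $\ul{x}^{\xi}\prod_a (n_a-v_a(\sigma))!/n_a!$ times $\prod_{\cyc\in C(\sigma)}-\Tr\hol(\cyc)$. Pulling out $1/(n_1!\cdots n_p!)$ gives \eqref{eq:vector-fields}.

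\textbf{Main obstacle.} The step I expect to need the most care is the bijection bookkeeping. One point is that the edges chosen along cycles of $\delta$ and the edges chosen at unremoved labels must together form a single $\xi\in\Xi$, with each label getting exactly one edge. The other is the direction of edges in the Laplacian convention: $\Delta_{[uv]}$ involves edges $e:u\to v$, so $\xi(i)$ must go from $\bl(i)$ to $\bl(\delta(i))$. This orientation must agree with the well-chained condition $t(\xi(i))=\bl(\sigma(i))$ and with the definition \eqref{eq:defhol} of $\hol$. The sign also needs checking, namely that $-W(-\Delta,c)$ rather than $W(\Delta,c)$ appears, so that each cycle carries exactly $-\Tr\hol$; this follows from the sign conventions already in \eqref{eq:interessante}.
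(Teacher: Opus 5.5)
Your proof is correct, but it takes a different route from the paper's.

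\textbf{The paper's route.} The paper starts from the unlabelled expansion of Corollary~\ref{coro:detDelta}. It expands each $\ul{z}^{\ul{n}-\ul{v}(\Cyc)}$ multinomially into multisets of edges and regroups the result as a sum over pairs $(X,\Cyc)$ with $X\in\mathscr E_{\ul n}$ and $\rE(\Cyc)\subseteq X$, which is \eqref{eq:version1}. It then lifts back to pairs $(\xi,\sigma)$ by an orbit--stabiliser count under $\S_{n_1}\times\cdots\times\S_{n_p}$. That count needs stabilisers of order $(X\setminus\rE(\Cyc))!\,\Cyc!\prod_{\cyc\in\Cyc}\val(\cyc)$, which the paper justifies only by analogy with Lemma~\ref{lem:counting}.

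\textbf{Your route.} You stop the first proof of Theorem~\ref{thm:main} at \eqref{eq:interessante}, before the reduction to the reference sets $R_{\ul r}$ and before Lemma~\ref{lem:counting}, while everything is still labelled. You expand $z_a^{n_a-|R|_a}$ label by label over $I_a\setminus R$ rather than multinomially, and $W(-\Delta,c)$ over edge choices. This gives a plain bijection $(R,\delta,\xi)\leftrightarrow(\xi,\sigma)$ with no symmetry factor to compute. The weights $\prod_a(n_a-v_a(\sigma))!/n_a!$ come straight from the insertion count \eqref{eq:resto}.

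\textbf{Comparison.} In effect the paper descends to multisets and then re-ascends, and your argument shows this round trip can be skipped. It also dispenses with the stabiliser computation. Moreover, \eqref{eq:interessante} multiplied by $n_1!\cdots n_p!$ has integer coefficients and is derived without division. So your argument yields the identity multiplied by $n_1!\cdots n_p!$ over any commutative ring. In return, the paper's route uses only the final statement of Corollary~\ref{coro:detDelta}. It therefore works with either proof of Theorem~\ref{thm:main}, including the transcendental one of Section~\ref{sec:alternative-proof}, and it exhibits the intermediate multiset formula \eqref{eq:version1}.

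One small point to state explicitly in your bijection: a kinematic permutation has no fixed points. Hence the set of non-fixed points of the extended $\sigma$ is exactly $R$, which is what makes your two maps mutually inverse.
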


As already explained above in Subsection \ref{sec:warmup}, \eqref{eq:vector-fields} specializes to a theorem of Forman \cite[Eq. (1)]{Forman} when all $n_a$ are equal to $1$.

\begin{proof} 
We start with Corollary \ref{coro:detDelta}, that is
\[\det \Delta= \sum_{\Cyc \in\CMS(\rG)_{\leq \ul{n}}} \frac{\ul{z}^{\ul{n}-\ul{v}(\Cyc)} }{\Cyc!} 
 \prod_{\cyc\in \Cyc} \frac{-\ul{x}^{\ul{e}(\cyc)}\Tr \hol(\cyc)}{\val(\cyc)}\,,\]
and our goal is to replace the monomials in $z_a$ by linear combinations of monomials in the weights $x_e$ and describe the new coefficients.

In order to do so, recall from \eqref{eq:defz} the definition of $z_{a}$ for $a\in \set{p}$. For each $\ul{m}$,
\[\ul{z}^{\ul{m}}=\prod_{a=1}^{p} z_{a}^{m_a}=\sum_{Y\in \mathscr E_{\ul{m}}} \frac{m_{1}!\ldots m_{p}!}{Y!}\, \ul{x}^{Y}.\]
Thus, we find
\begin{align*}
\det \Delta &
= \sum_{\Cyc \in\CMS(\rG)_{\leq \ul{n}}} 
\sum_{Y\in \mathscr E_{\ul{n}-\ul{v}(\Cyc)} }
\frac{1}{\Cyc!}  \frac{\prod_{a=1}^{p} (n_a-v_{a}(\Cyc))!}{Y!} \, \ul{x}^{Y\cup \Cyc}
\ \prod_{\cyc\in \Cyc} \frac{-\Tr \hol(\cyc)}{\val(\cyc)}\, .\\
\end{align*}

At this point, our goal is to invert the order of summation. 

Given $\Cyc\in \CMSf$, let $\rE(\Cyc)$ denote the multiset of elements of $\rE$ used in $\Cyc$.\footnote{The multiplicity of all edges $e\in \rE$ in $\rE(\Cyc)$ is the integer vector introduced above on p.\pageref{def:e(C)} and denoted $\ul{e}(\Cyc)$.} The condition $Y\in \mathscr E_{\ul{n}-\ul{v}(\Cyc)}$ is equivalent to $Y=X\setminus \rE(\Cyc)$ for some $X\in \mathscr E_{\ul{n}}$ such that $\rE(\Cyc)\subseteq X$. In the following, we let $\mathscr C(X)$ be the multisets of cycles of $\rG$ such that $\rE(\Cyc)\subseteq X$. Thus,
\begin{equation}\label{eq:multinomial}
\prod_{a=1}^{p} z_{a}^{n_a-v_{a}(\Cyc)}=\sum_{\substack{X\in \mathscr E_{\ul{n}} \\ \Cyc\in \mathscr C(X)}} \frac{\prod_{a=1}^{p} (n_a-v_{a}(\Cyc))!}{(X\setminus \rE(\Cyc))!}
\ul{x}^{X\setminus \rE(\Cyc)}\,.
\end{equation}

Thus, inserting \eqref{eq:multinomial} in the above computation and exchanging the sums, we find
\begin{equation}\label{eq:version1}
\det \Delta=\sum_{\substack{X\in \mathscr E_{\ul{n}}\\ \Cyc\in \mathscr C(X)}} \ul{x}^{X} \ \frac{1}{\Cyc! \prod_{\cyc\in\Cyc}\val(\cyc)}\ \frac{\prod_{a=1}^{p} (n_a-v_{a}(\Cyc))!}{(X\setminus \rE(\Cyc))!} \ \prod_{\cyc\in \Cyc} -\Tr\hol(\cyc)\, .
\end{equation}

We will now lift this sum to a sum over stacks of edges and well-chained permutations thereof. To $\xi\in \Xi$ and $\sigma\in \Sigma(\xi)$, we associate the multiset $X$ given by the range of $\xi$ counting multiplicities, and the multiset of cyclic walks $\Cyc$ given by projecting the elements of $C(\sigma)$ down to $\set{p}$. We write $(X,\Cyc)=F(\xi,\sigma)$ and we need to determine how many preimages by $F$ there exist for a given pair $(X,\Cyc)$.

Along the lines of the arguments leading to Lemma \ref{lem:counting}, we claim that the set of preimages of $(X,\Cyc)$ by $F$ is an orbit of the action of $\S_{n_{1}}\times \ldots \times \S_{n_{p}}$, and the stabilizer of a pair $(\xi,\sigma)$ has $(X\setminus \rE(\Cyc))! \Cyc! \prod_{\cyc\in\Cyc}\val(\cyc)$ elements. Therefore, 
\[\big|F^{-1}\big((X,\Cyc)\big)\big|=\frac{\prod_{a=1}^{p} n_{a}!}{(X\setminus \rE(\Cyc))!\Cyc! \prod_{\cyc\in\Cyc}\val(\cyc)}.\]
This factor is already present in \eqref{eq:version1}, so that, by a summation formula,
\begin{align*}
\det \Delta&=\sum_{\substack{X\in \mathscr E_{\ul{n}} \\ \Cyc\in \mathscr C(X)}} \big|F^{-1}\big((X,\Cyc)\big)\big|^{-1} \  \ul{x}^{X}\ \prod_{a=1}^{p} \frac{(n_a-v_{a}(\Cyc))!}{n_{a}!} \ \prod_{\cyc\in \Cyc} -\Tr\hol(\cyc)\\
&=\sum_{\substack{\xi \in \Xi \\ \sigma \in \Sigma(\xi)}}\ul{x}^{\xi}\ \prod_{a=1}^{p} \frac{(n_a-v_{a}(C(\sigma)))!}{n_{a}!} \ \prod_{c\in C(\sigma)} -\Tr\hol(c),\\
\end{align*}
which concludes the proof.
\end{proof}


\subsection{Reinterpretations of the combinatorial factor in Equation \eqref{eq:vector-fields}}\label{sec:reinterpretations}

We now provide two reinterpretations of the combinatorial factor $\prod_{a=1}^p (n_a-\vert \sigma\vert_a)!$ in \eqref{eq:vector-fields}.

\subsubsection{A first point of view}

Set
\[\Sigma'(\xi)=\big\{\sigma\in \S_{n} : \forall c=(i_{1}\cdots i_{r})\preccurlyeq \sigma, \bl(i_{1})=\ldots =\bl(i_{r}) \text{ or } \forall s\in\set{r}, \, \xi(i_{s})=\big(\bl(i_{s}),\bl(\sigma(i_{s}))\big)\big\}.\]
This is the set of permutations for which each cycle is either purely stationary (staying inside~$I_a$, for some $a$), or purely cinematic (never stepping twice consecutively in the same~$I_a$).
The previously defined set $\Sigma(\xi)$ is a subset of $\Sigma'(\xi)$ where the stationary cycles are all of length $1$ (that is, they are fixed points).

For $\sigma\in \Sigma'(\xi)$, we let $C'(\sigma)$ be the set of cycles of $\sigma$ that visit at least two distinct blocks. When $\sigma\in \Sigma(\xi)$, we have $C'(\sigma)=C(\sigma)$.

The number of ways to build an element of $\Sigma'(\xi)$ from an element $\sigma\in \Sigma(\xi)$ is the product of factorials $\prod_{a=1}^p (n_{a}-|\sigma|_{a})!$. Thus, Theorem \ref{thm:formanN} implies the following corollary.

\begin{corollary}We have
\begin{equation}\label{eq:sigmaprime}
\det \Delta=\frac{1}{n_{1}!\ldots n_{p}! } \, \sum_{\xi\in \Xi} \, \ul{x}^{\xi}  \bigg[\sum_{\sigma \in \Sigma'(\xi)} \, \prod_{\cyc\in C'(\sigma)} -\Tr\hol(\cyc)\bigg]\,.
\end{equation}
\end{corollary}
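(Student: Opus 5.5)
The plan is to deduce \eqref{eq:sigmaprime} from Theorem~\ref{thm:formanN} by a fibre count. I will define a map $\pi:\Sigma'(\xi)\to\Sigma(\xi)$ that keeps every cycle of $\sigma'\in\Sigma'(\xi)$ visiting at least two blocks and replaces every stationary cycle (a cycle contained in a single $I_a$) by fixed points. The claim is then that each fibre $\pi^{-1}(\sigma)$ has exactly $\prod_{a=1}^p (n_a-v_a(\sigma))!$ elements, all carrying the summand of $\sigma$. Fixing $\xi$ and summing over fibres then turns the bracket of \eqref{eq:vector-fields} into that of \eqref{eq:sigmaprime}.

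First I check that $\pi$ is well defined. Let $\sigma'\in\Sigma'(\xi)$ and let $c$ be a cycle of $\sigma'$ meeting two blocks. The defining condition gives, for every $i_s\in c$, that $\xi(i_s)$ goes from $\bl(i_s)$ to $\bl(\sigma'(i_s))$. In particular $t(\xi(i_s))=\bl(\sigma'(i_s))$, which is the condition in $\Sigma(\xi)$. Consecutive elements of $c$ lie in distinct blocks, since $G$ has no self-loops, so $\xi(i_s)$ cannot be a loop. Hence $c$ is a kinematic cycle, and $C'(\sigma')=C(\pi(\sigma'))$. Conversely, a non-trivial cycle of some $\sigma\in\Sigma(\xi)$ changes block at every step for the same reason, so it belongs to the ``purely kinematic'' alternative.

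Next I identify the fibre. The elements of $\pi^{-1}(\sigma)$ are obtained by keeping $C(\sigma)$ and choosing, independently for each $a$, an arbitrary permutation of the fixed points of $\sigma$ lying in $I_a$. There are $n_a-v_a(\sigma)$ such points, so the number of choices is $(n_a-v_a(\sigma))!$. Each resulting $\sigma'$ lies in $\Sigma'(\xi)$, because all its new cycles are stationary, and $\pi(\sigma')=\sigma$. The map is injective since the kinematic cycles and the stationary part determine $\sigma'$.

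Finally, the summand $\ul{x}^{\xi}\prod_{\cyc\in C'(\sigma')}-\Tr\hol(\cyc)$ equals the corresponding summand for $\sigma=\pi(\sigma')$. Summing over fibres therefore reproduces the factor $\prod_a(n_a-v_a(\sigma))!$, which gives the corollary. The only delicate point is reading the condition $\xi(i_s)=(\bl(i_s),\bl(\sigma(i_s)))$ as ``the edge $\xi(i_s)$ goes from $\bl(i_s)$ to $\bl(\sigma(i_s))$.'' This matters because $\hol(\cyc)$ must use the edges $\xi(i_s)$ exactly as in Theorem~\ref{thm:formanN}, and that holds as soon as the kinematic cycles are identical.
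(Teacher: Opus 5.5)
Your proposal is correct and is the paper's own argument. The paper deduces the corollary from Theorem~\ref{thm:formanN} by noting that each $\sigma\in\Sigma(\xi)$ extends to exactly $\prod_{a}(n_a-v_a(\sigma))!$ elements of $\Sigma'(\xi)$ by permuting its fixed points within blocks, which is your fibre count for $\pi$; you also spell out details the paper leaves implicit, such as the well-definedness of $\pi$ and the use of the no-self-loop assumption to get $C'(\sigma')=C(\pi(\sigma'))$.
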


\subsubsection{A second point of view}
 
Set $B=\S_{n_{1}}\times \ldots \times \S_{n_{p}}$. For two elements $\alpha, \beta\in B$, we write 
\[\alpha \leq \beta\]
when all cycles of $\alpha$ of length at least $2$ are cycles of $\beta$. Note that with this definition, the identity permutation $1$ satisfies $1\leq \beta$ for all $\beta$.

Every $\sigma\in \Sigma(\xi)$ induces an element of $B$ and we let 
\[P:\Sigma(\xi)\to B\] 
denote the corresponding map. Adding a permutation of the fixed points of~$\sigma\in \Sigma(\xi)$ (the number of ways of doing that is the product of factorials $\prod_{a=1}^p (n_{a}-|\sigma|_{a})!$), we get an element of $B$.

Thus, Theorem \ref{thm:formanN} implies the following corollary.

\begin{corollary}We have
\begin{equation}\label{eq:eq29}
\det \Delta=\frac{1}{n_{1}!\ldots n_{p}! } \, \sum_{\substack{\xi\in \Xi\\ \beta \in B}} \, \ul{x}^{\xi} \bigg[\sum_{\substack{\sigma \in \Sigma(\xi) \\ P(\sigma)\leq \beta}} \,  \prod_{\cyc\in C(\sigma)} -\Tr\hol(\cyc) \bigg]\,.
\end{equation}
\end{corollary}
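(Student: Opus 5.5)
The plan is to derive \eqref{eq:eq29} directly from Theorem~\ref{thm:formanN}. Fix $\xi\in\Xi$ and $\sigma\in\Sigma(\xi)$. It suffices to show that
\[\big|\{\beta\in B : P(\sigma)\leq\beta\}\big|=\prod_{a=1}^{p}(n_a-v_a(\sigma))!\,.\]
Granting this, we exchange the sums over $\beta$ and $\sigma$ on the right-hand side of \eqref{eq:eq29}. The inner summand $\prod_{\cyc\in C(\sigma)}-\Tr\hol(\cyc)$ does not depend on $\beta$, so summing over $\beta\geq P(\sigma)$ produces exactly the factorial weight of \eqref{eq:vector-fields}, and the identity follows.

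To count the $\beta$'s, I first make $P$ precise. I read $P(\sigma)$ as the block-preserving permutation whose nontrivial cycles are the cycles of $\sigma$ contained in a single $I_a$, that is, its stationary part. For $\sigma\in\Sigma(\xi)$ these are only fixed points, since every non-trivial cycle of $\sigma$ is well-chained and so changes block. In that reading $P(\sigma)$ is the identity, and $1\leq\beta$ holds for every $\beta$, which would give the count $n_1!\cdots n_p!$. That is the wrong number. So the intended meaning must be different.

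I would instead read $P(\sigma)$ as the element of $B$ obtained by restricting to the fixed points of $\sigma$. For each $a$, the set $I_a$ contains $n_a-v_a(\sigma)$ fixed points of $\sigma$, where $v_a(\sigma)=|\mathrm{Supp}(C(\sigma))\cap I_a|$. The condition $P(\sigma)\leq\beta$ should then mean that $\beta\in B$ is a permutation of these fixed-point sets, extended by the identity on the moved points. This matches the sentence preceding the corollary: adding a permutation of the fixed points of $\sigma$ gives an element of $B$, in $\prod_a(n_a-v_a(\sigma))!$ ways. Under this reading the count is immediate. The $\beta$'s compatible with $\sigma$ are in bijection with $\prod_a\mathrm{Perm}\big(I_a\setminus\mathrm{Supp}\,C(\sigma)\big)$, whose cardinality is the required product of factorials.

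Equivalently, the argument can be phrased through the previous corollary, via the bijection $(\sigma,\beta)\mapsto\sigma'\in\Sigma'(\xi)$. Here $\sigma'$ keeps the kinematic cycles of $\sigma$ and replaces its fixed points by the cycles of $\beta$ on them. This bijection matches \eqref{eq:eq29} term by term with \eqref{eq:sigmaprime}, since $C'(\sigma')=C(\sigma)$.

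The main obstacle is interpretive: pinning down $P$ and the order $\leq$ so that the fibre count is the product of factorials. Once that is fixed, the rest is a routine exchange of finite sums. I would first check the case $n_a=1$ for all $a$, where $B$ is trivial and \eqref{eq:eq29} must reduce to Theorem~\ref{thm:formanN}, to confirm the convention.
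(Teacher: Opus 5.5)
Your proof is correct and is the paper's argument: for each $\sigma\in\Sigma(\xi)$, the compatible $\beta\in B$ are obtained by permuting the fixed points of $\sigma$ within blocks, which gives $\prod_{a}(n_a-v_a(\sigma))!$ choices, and exchanging the sums reduces \eqref{eq:eq29} to Theorem~\ref{thm:formanN}. Your reading of $P(\sigma)\leq\beta$ as ``$\beta$ fixes every point moved by $\sigma$'' is in fact forced: under the literal order on $B$ one has $|\{\beta:\alpha\leq\beta\}|=\prod_a|\{i\in I_a:\alpha(i)=i\}|!$, and for instance with $n_1=3$, $n_2=1$ and $\sigma$ a transposition between the two blocks, no $\alpha\in B$ gives the required value $2$.
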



\section{Euler factorization of the twisted Laplacian determinant}\label{sec:euler-product}

As a byproduct of the second proof of Theorem~\ref{thm:main}, we obtained Proposition \ref{prop:Euler}. 
Let us reformulate this proposition in the case of a quiver representation, and also show that an identity holds numerically, and not just in the sense of formal Laurent series. As we noted in the comments following Proposition \ref{prop:Euler}, one cannot truncate the product, so in order to have a numerical identity, we need to show that the product is absolutely convergent.

In the case where the quiver has only a finite number of prime cycles, Proposition \ref{prop:Euler} yields an equality. 

\begin{theorem}\label{thm:finite-euler}
In the case where $\rG$ only has a finite number of prime cycles, we have 
\begin{equation}\label{eq:coro-finite}
    \det \Delta = \ul{z}^{\ul{n}} \prod_{\cyc\in \cP(\rG)} \det(I-p(\cyc)\hol(\cyc))\,,
\end{equation}
where $p_{e}={x_{e}}/{z_{s(e)}}$.
\end{theorem}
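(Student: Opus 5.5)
We will start from Proposition~\ref{prop:Euler} applied to $A=\Delta$, whose diagonal blocks are $z_uI_{n_u}$ by \eqref{eq:coefDelta}. Since $\det\Delta$ is a polynomial in $\ul{z}$ and $\ul{x}$, it suffices to work where all $z_u\neq 0$. Then Proposition~\ref{prop:Euler} gives
\[\det\Delta=\ul{z}^{\ul{n}}\prod_{\pcyc\in\cP}\det\big(I-\ul{z}^{-\ul{v}(\pcyc)}\hol(-\Delta,\pcyc)\big)\]
as an identity of formal series in $z_1^{-1},\ldots,z_p^{-1}$. Here $\cP$ is the set of prime cyclic walks on $\set{p}$. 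The plan is to rewrite this product of prime cyclic walks as a product over prime cycles of $\rG$, and to check that only finitely many factors differ from $1$. Once that is done, the formal identity is an identity of Laurent polynomials, hence a numerical one.

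\textbf{Step 1: discard walks not compatible with $\rG$.} If a prime cyclic walk is not compatible with $\rG$, some block $\Delta_{[a_ia_{i+1}]}$ vanishes. Its factor is then $\det(I)=1$.

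\textbf{Step 2: regroup by prime cycles.} For a compatible prime walk, I will not expand the determinant directly. Instead I go back one step, to the $\exp\Tr\log$ form of \eqref{eq:expW}, and apply Lemma~\ref{lem:forget} to each power. Concretely, $\sum_{\cyc\in\cW}W(\Delta/z,\cyc)/\val(\cyc)$, with signs arranged as in \eqref{eq:expW}, becomes a sum over cycles $\cyc$ of $\rG$ of $\ul{x}^{\ul{e}(\cyc)}\ul{z}^{-\ul{v}(\cyc)}\Tr\hol(\cyc)/\val(\cyc)$.

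Two sign checks are needed. First, $-\Delta_{[uv]}=\sum_{e:u\to v} x_eU_e$, so the entries of $-C=-Z^{-1}B$ are exactly $x_eU_e/z_u$. Second, the valuation of a cycle on $\rG$ is defined by its own edge sequence, and the bookkeeping still works: $\Tr(C^k)/k$ counts based cycles of length $k$ in $\rG$, and each cycle $\cyc$ appears $|\cyc|/\val(\cyc)$ times. This is the same argument as in the second proof of Theorem~\ref{thm:main}, now run with edge sequences instead of vertex sequences.

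We then write every cycle uniquely as $\pcyc^m$ with $\pcyc\in\cP(\rG)$ and resum the logarithm, exactly as in Section~\ref{sec:primeeuler}. Using $\ul{x}^{\ul{e}(\pcyc)}\ul{z}^{-\ul{v}(\pcyc)}=\prod_{e\in\pcyc}p_e=p(\pcyc)$, this yields the formal identity
\[\det\Delta=\ul{z}^{\ul{n}}\prod_{\pcyc\in\cP(\rG)}\det\big(I-p(\pcyc)\hol(\pcyc)\big).\]

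\textbf{Step 3: pass from formal to numerical.} By hypothesis $\cP(\rG)$ is finite, so the right-hand side is a finite product of polynomials in $z_u^{-1}$, and hence a Laurent polynomial. Two Laurent polynomials that agree as formal series in $z^{-1}$ coincide. Thus the equality holds for all nonzero $z_u$, and then wherever $p_e$ is defined. It is worth double-checking in Step 2 that infinitely many cyclic walks or non-prime cycles cannot sneak back in. They cannot, because every cycle is a power of one of the finitely many prime cycles, and these powers have been absorbed into the logarithm series.

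\textbf{Main obstacle.} The delicate point is Step 2: making the resummation over edge-labelled cycles legitimate. The natural route is to avoid passing through $\hol(-\Delta,\pcyc)$ for vertex walks, where multiple edges mix inside a single block, and to redo the computation of \eqref{eq:expW} at the level of the quiver. I would do this by expanding $\Tr(C^k)$ as a sum over closed edge paths of length $k$.
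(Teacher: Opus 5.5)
Your proof is correct, but it does not follow the paper's route. The paper gets Theorem~\ref{thm:finite-euler} in one line from the analytic Euler product, Theorem~\ref{thm:euler-product}. There, for $\kappa_a>0$, the matrix $P$ has spectral radius $<1$ and the product formula holds numerically. Because $\cP(\rG)$ is finite, one can then let $\kappa_a\to0$ factor by factor.

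You instead stay in formal power series in $z_1^{-1},\ldots,z_p^{-1}$, with $\ul{z}$ treated as independent of $\ul{x}$. You redo the computation of Section~\ref{sec:primeeuler} directly on closed edge paths of the quiver. Finiteness of $\cP(\rG)$ enters only at the end: $\det(I_n+C)$ and the product are then both polynomials in $z^{-1}$ that agree as formal series. After that you specialise $z_a=\sum_{s(e)=a}x_e$.

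Your route is self-contained and more general. It needs no positivity of the $x_e$, no bound $\|U_e\|\le1$, no sub-Markov condition, and no limit in $\kappa$. The paper's derivation, read literally, inherits $x_e\ge0$ and Assumptions~\ref{assumption}--\ref{assumption2} from Theorem~\ref{thm:euler-product}. To reach arbitrary complex data it would need an additional polynomial-identity argument in the entries of $\ul{U}$ and in $\ul{x}$. The paper's route is shorter, but only once the analytic theorem is in hand.

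Two small remarks. First, your Step~1 and your starting point, Proposition~\ref{prop:Euler}, are never used, since Step~2 restarts from \eqref{eq:detexptrlog}.

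Second, in Step~2 the justification should be the one in your last paragraph, not Lemma~\ref{lem:forget} applied to \eqref{eq:expW}. That is, expand $\Tr((-C)^k)$ over based closed edge paths and note that each cycle $\cyc$ of $\rG$ arises $|\cyc|/\val(\cyc)$ times. The reason is that a fixed based vertex walk representing $\check\cyc$ is covered by $\val(\check\cyc)/\val(\cyc)$ based edge paths representing $\cyc$. So the correct relation is
\[W(-\Delta,\check\cyc)/\val(\check\cyc)=\sum_{f(\cyc)=\check\cyc}\ul{x}^{\ul{e}(\cyc)}\Tr\hol(\cyc)/\val(\cyc).\]
Lemma~\ref{lem:forget}, read literally, omits the factor $\val(\check\cyc)/\val(\cyc)$. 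This matters when there are parallel edges. For example, take two edges $e_1,e_2:1\to2$, one edge $f:2\to1$, and $\check\cyc=\cy1,2,1,2\yc$. The cycle traversing $e_1,f,e_2,f$ has valuation $1<2$ and appears twice in $W(-\Delta,\check\cyc)$. Your direct edge-level bookkeeping handles this correctly.
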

\begin{proof}
This theorem will appear as a corollary of Theorem \ref{thm:euler-product} below, since when the product is finite in the right-hand side of \eqref{eq:twisted-zeta}, we can set $\kappa_a=0$ for all $a$. 
\end{proof}

Note that since the prime cycles are necessarily disjoint when there are only a finite number, the right-hand side of \eqref{eq:coro-finite} is indeed a polynomial in $\ul{x}$, as it should be from its equality with the left-hand side.

In general, when $\cP(\rG)$ is infinite, we need to be careful about the convergence of the infinite product. We thus make some assumptions, as follows. 

Let us assume that for each edge $e$, we have $x_{e}\geq 0$. Also, for each $a\in \set{p}$, let $\kappa_{a}\geq 0$ be a real number. 
\begin{assumption}\label{assumption}
Let us assume that for each $a\in \set{p}$, there exists $b\in \set{p}$ with $\kappa_{b}>0$ such that there exists a path from $a$ to $b$ in $\rG$.
\end{assumption}

In probabilistic terms, this assumption means that the Markov process with transition $p$ is {\em strictly sub-Markovian}, in the sense that its trajectories almost surely eventually end.

For each edge $e\in \rE$, set
\begin{equation}
p_{e}=\frac{x_{e}}{z_{s(e)}+\kappa_{s(e)}}\,.
\end{equation}
Let us define blockwise a matrix $P$ by setting, for all $a,b\in \set{p}$,
\begin{equation}
P_{[ab]}=\sum_{s(e)=a, t(e)=b}p_{e} U_{e}\,.    
\end{equation}

\begin{assumption}\label{assumption2}
Let us assume that $\|U_{e}\|\leq 1$ for all $e$, for some associated norm. 
\end{assumption}

Together, Assumption \ref{assumption} and Assumption \ref{assumption2} imply that the spectral radius of $P$ is strictly less than~$1$. 

For any cycle $c=(a_1,\ldots, a_m)$, we recall that $\hol(\cyc)=U_{a_{1}a_{2}}\ldots U_{a_{m}a_{1}} \in M_{n_{a_1}}(\C)$.
Note that under Assumption~\ref{assumption}, given a cyclic walk $\cyc=\cy a_1,\ldots, a_m\yc$, the determinant 
\begin{equation}\label{eq:det-based}
\det\left(I_{n_{a_i}}-p(\cyc)\hol(c)\right)
\end{equation}
is independent of the choice of representative $c=(a_i,\ldots, a_m, a_1,\ldots, a_{i-1})$ of $\cyc$. 
This follows from the fact that $p(c)$ is independent of the representative $c$, and that $\hol(c)$ are all conjugate of one another, so that the result follows from the identity, valid for any rectangular matrices~$A$ and~$B$ of compatible dimensions:
\[\det (I+ AB)= \det(I+BA)\,.\]
With a slight abuse of notation, we thus denote $\det\left(I-p(\cyc)\hol_U(\cyc)\right)$ the common value of the determinants \eqref{eq:det-based}.

Let $\cP(\rG)$ denote the set of prime cycles on $\rG$. Note that these cycles are not necessarily simple and in general may have self-intersection.

\begin{theorem}\label{thm:euler-product}
Under Assumptions \ref{assumption} and \ref{assumption2}, we have 
\begin{equation}\label{eq:twisted-zeta}
    \det \big(\diag(\kappa_a:a\in\rV)+\Delta\big)= (\ul{z}+\ul{\kappa})^{\ul{n}} \prod_{\cyc\in \cP(\rG)} \det\big(I-\ul{p}^{-\ul{e}(\cyc)}\hol(\cyc))\big)\,.
\end{equation}
\end{theorem}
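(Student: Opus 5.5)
Set $K=\diag(\kappa_a I_{n_a})$ and $D=\diag((z_a+\kappa_a)I_{n_a})$. Since $K+\Delta$ has scalar diagonal blocks $(z_a+\kappa_a)I_{n_a}$ and off-diagonal blocks $-\sum_{e:a\to b}x_eU_e$, we can write $K+\Delta=D(I-P)$, with $P$ the block matrix defined above. Hence
\[\det(K+\Delta)=(\ul{z}+\ul{\kappa})^{\ul{n}}\det(I-P).\]
It therefore suffices to show that $\det(I-P)$ equals the infinite product over prime cycles, as a numerical identity.

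The first step is to check that the spectral radius of $P$ is less than $1$. Assumption~\ref{assumption2} gives $\|P^k\|\le C\,\|p^k\|$ blockwise, where $p$ is the scalar sub-Markovian matrix with entries $p_{ab}=\sum_{e:a\to b}p_e$. Assumption~\ref{assumption} makes every vertex reach a killing vertex, so $p^{k}\to0$ geometrically. Granting this, $\log(I-P)=-\sum_{k\ge1}P^k/k$ converges absolutely, and $\det(I-P)=\exp\big(-\sum_k \Tr(P^k)/k\big)$.

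The second step is the combinatorial rewriting. Expanding $\Tr(P^k)$ over closed edge-sequences of length $k$ on $\rG$ (there are no self-loops, so diagonal blocks vanish), exactly as in the proof of \eqref{eq:expW} combined with Lemma~\ref{lem:forget}, gives
\[\sum_k\frac{\Tr P^k}{k}=\sum_{\cyc\text{ cycle on }\rG}\frac{\ul{p}^{\ul e(\cyc)}\Tr\hol(\cyc)}{\val(\cyc)}.\]
Writing each cycle uniquely as $\pcyc^m$ with $\pcyc$ prime, the right-hand side becomes
\[\sum_{\pcyc\in\cP(\rG)}\sum_{m\ge1}\frac{(\ul p^{\ul e(\pcyc)})^m\Tr(\hol(\pcyc)^m)}{m}=-\sum_{\pcyc}\log\det\big(I-\ul p^{\ul e(\pcyc)}\hol(\pcyc)\big).\]
Exponentiating and exchanging the exponential with the sum over $\pcyc$ then yields the product. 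This is the same computation as in the proof of Proposition~\ref{prop:Euler}, carried out now with actual numbers.

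The main obstacle is justifying these rearrangements, which requires absolute convergence. I would bound
\[\sum_{\cyc}\frac{\ul p^{\ul e(\cyc)}|\Tr\hol(\cyc)|}{\val(\cyc)}\le \max_a n_a\sum_k\Tr(p^k)<\infty,\]
using $p_e\ge0$, $|\Tr\hol|\le n\|\hol\|\le n$, and $\rho(p)<1$. This single bound does three things. It justifies reordering the double series. It ensures each factor satisfies $\ul p^{\ul e(\pcyc)}\|\hol(\pcyc)\|<1$, so the logarithm series converges. And it shows that $\sum_{\pcyc}\big|\log\det(I-\ul p^{\ul e(\pcyc)}\hol(\pcyc))\big|<\infty$, so the infinite product converges absolutely and equals the exponential of the sum. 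The notation $\ul p^{-\ul e}$ in the statement should be read as $\ul p^{\ul e(\cyc)}$, consistent with Theorem~\ref{thm:finite-euler}.
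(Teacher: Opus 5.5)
Your proposal is correct and takes the same route as the paper: factor out $D=\diag((z_a+\kappa_a)I_{n_a})$, write $\det(I-P)=\exp\Tr\log(I-P)$ using $\rho(P)<1$, regroup $\sum_k \Tr(P^k)/k$ into cycles written as powers $\pcyc^m$ of prime cycles, and resum each prime's contribution into $\det(I-\ul p^{\ul e(\pcyc)}\hol(\pcyc))$. Your absolute-convergence bound via $\sum_k \Tr(p^k)$ supplies the justification for the rearrangements and the infinite product, which the paper leaves implicit, and your reading of $\ul p^{-\ul e(\cyc)}$ as $\ul p^{\ul e(\cyc)}$ agrees with the paper's own proof and with Theorem \ref{thm:finite-euler}.
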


\begin{remark}
We noted that Equation \eqref{eq:vector-fields} generalizes Forman's \cite[Eq. (1)]{Forman}. Equation~\eqref{eq:twisted-zeta} is reminiscent of another formula of Forman, for zeta functions of discrete dynamical systems on graphs, see \cite[Theorem 7.5]{Forman-zeta}. This formula is analogous to the expression $\sum_{n\ge 1} n^{-s} = \prod_{p \,\text{prime}} (1-p^{-s})^{-1}$, which is the Euler expansion for the classical zeta function of number theory.

It is moreover reminiscent of the twisted zeta function introduced already by Selberg, and which appears for instance in \cite[Eq. (4.3) and (4.4)]{Ray-Singer} and \cite[Theorem 1]{Fried} and \cite[Eq. (1.1)]{Jorgenson-Smajlovic-Spilioti}. This zeta function is of the form 
\begin{equation}\label{eq:ray-singer}
    \prod_{\substack{p \, \text{prime}\\ \text{closed geodesic}}} \prod_{k=1}^\infty \det(I-e^{-k\ell(p)}\hol(p))\,,
\end{equation}
where $\ell(p)$ is the length of $p$. The fact that we do not have the second infinite product in \eqref{eq:twisted-zeta} probably accounts for the fact that the prime closed curves we consider are not geodesic (in the sense that they have backtracking steps) and that this zeta function is the one of Ihara which we do not consider here.
\end{remark}

\begin{proof}
Under Assumptions~\ref{assumption} and ~\ref{assumption2}, $P$ has spectral radius strictly less than $1$, and we can perform analytic calculus using~$P$ as a variable in series with convergence radius greater or equal to $1$. 
We find
\begin{align*}
(\ul{z}+\ul{\kappa})^{-\ul{n}}\det(\diag(\kappa_a:a\in\rV)+\Delta) & = \exp \Tr \log (I_n-P) = \exp \left( - \sum_{\cyc\in \cP(\rG)} \sum_{m=1}^\infty \frac{\Tr\hol(\cyc^m)}{m}\right) \\
&\hspace{-1cm} = \prod_{\cyc\in \cP(\rG)} \exp \left(-  \sum_{m=1}^\infty \frac{\Tr\hol(\cyc^m)}{m}\right) = \prod_{\cyc\in \cP(\rG)} \det \left(I - p(\cyc) \hol(\cyc)\right)\,.
\end{align*}
This concludes the proof.
\end{proof}

The advantage of Theorem \ref{thm:euler-product} over Theorem \ref{thm:formanN} is that it allows to show comparison inequalities for certain families of representations, namely unitary representations of constant rank.

For the rest of this subsection, assume that $\rG$ is a bidirected graph (in the sense of Subsection~\ref{sec:graphvsquivers}: that is, each edge $e$ has an inverse $e^{-1}$ also in $\rE$ and the weights $x_e$ are positive real numbers which are symmetric under inversion). Let us assume that $\rG$ is connected. Let $\Delta_0$ be the classical Laplacian of $\rG$ defined as the operator acting on functions by 
\begin{equation}
    \Delta_0 f(v)=\sum_{\substack{e\in \rE\\ s(e)=v}} x_e \left[f(s(e))-f(t(e))\right]\,.
\end{equation}
This is a self-adjoint nonnegative operator, with kernel of dimension $1$ (because $\rG$ is assumed to be connected), and its eigenvalues are 
\begin{equation}
\mu_1 \ge \mu_2 \ge \ldots \ge \mu_{p-1} >\mu_p=0\,.    
\end{equation}

\begin{corollary}Let $\rG$ be a bidirected finite connected weighted graph as above.
Let $N$ be an integer and $\ul{U}$ be a unitary representation of the graph $\rG$ endowed with a vector bundle of rank~$N$ (with $U_{e^{-1}}=U_e^{-1}$ for all $e\in \rE$).  
Let $\lambda_1\ge \ldots \ge \lambda_{Np} \ge 0$ be the spectrum of $\Delta$ associated with~$\ul{U}$. Then for all $t\geq 0$, we have
\begin{equation}
   \det(t+\Delta)= \prod_{i=1}^{N p} (t+\lambda_i) \ge  \prod_{i=1}^{p}( t+ \mu_i)^N=\det(t+\Delta_{0})^{N}\,.
\end{equation}
\end{corollary}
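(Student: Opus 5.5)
The plan is to compare the two sides through the Euler product of Theorem~\ref{thm:euler-product}, applied twice: once to the rank-$N$ unitary representation $\ul{U}$, and once to the trivial rank-$1$ representation (all $U_e=1$), whose Laplacian is $\Delta_0$. I take $\kappa_a=t$ for all $a\in\rV$.

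First, for $t>0$, Assumption~\ref{assumption} holds trivially, since every vertex $b$ has $\kappa_b=t>0$ and $a$ reaches itself. Assumption~\ref{assumption2} holds with the operator norm because the $U_e$ are unitary. Theorem~\ref{thm:euler-product} then gives
\[\det(t+\Delta)=(\ul{z}+t)^{N\ul{1}}\prod_{\cyc\in\cP(\rG)}\det\big(I_N-p(\cyc)\hol(\cyc)\big),\qquad \det(t+\Delta_0)=(\ul{z}+t)^{\ul{1}}\prod_{\cyc\in\cP(\rG)}\big(1-p(\cyc)\big),\]
with $p_e=x_e/(z_{s(e)}+t)$ and the same $p(\cyc)$ in both products. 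Moreover $\sum_{e\in\rE_a}p_e=z_a/(z_a+t)<1$, so $0\le p(\cyc)<1$.

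Next I need the products to converge absolutely, so that moduli may be taken factor by factor. Writing $P_0$ for the scalar transition matrix $(p_e)$ summed over edges, we have $\sum_{\cyc\in\cP(\rG)}p(\cyc)\le\sum_{k\ge1}\Tr P_0^k<\infty$, because $P_0$ is strictly substochastic and hence has spectral radius $<1$. Each factor $\det(I_N-p(\cyc)\hol(\cyc))$ differs from $1$ by $O(p(\cyc))$, uniformly, since $\|\hol(\cyc)\|=1$. Hence both products converge absolutely. This convergence check is the step that needs the most care, though the bound above should suffice.

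For the factorwise comparison, $\hol(\cyc)$ is unitary, so its eigenvalues are $e^{i\theta_1},\ldots,e^{i\theta_N}$. Therefore
\[\big|\det(I_N-p(\cyc)\hol(\cyc))\big|=\prod_{j=1}^N|1-p(\cyc)e^{i\theta_j}|\ge(1-p(\cyc))^N.\]
On the left-hand side, $\Delta$ is self-adjoint and nonnegative for symmetric weights and $U_{e^{-1}}=U_e^{-1}=U_e^*$. Indeed its quadratic form is $\tfrac12\sum_{e}x_e|f(s(e))-U_ef(t(e))|^2\ge0$. So $\det(t+\Delta)=\prod_i(t+\lambda_i)$ is a positive real number and equals its modulus. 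Taking moduli in the Euler product and multiplying the factorwise inequalities (legitimate by absolute convergence, all factors being nonzero) gives
\[\det(t+\Delta)\ge(\ul{z}+t)^{N\ul{1}}\prod_{\cyc\in\cP(\rG)}(1-p(\cyc))^N=\det(t+\Delta_0)^N=\prod_{i=1}^p(t+\mu_i)^N.\]
Finally, the case $t=0$ follows by letting $t\downarrow0$, since both sides are polynomials in $t$.
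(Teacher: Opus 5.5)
Your proposal is correct and follows the paper's route: you apply Theorem~\ref{thm:euler-product} with $\kappa_a=t$, bound each factor below by $(1-p(\cyc))^N$ using that the unitary holonomy has unimodular eigenvalues, and use that $\det(t+\Delta)$ is a positive real number. Your extra checks fill in details the paper's proof passes over, namely absolute convergence of the products (so the factorwise comparison is legitimate) and obtaining $t=0$ as a limit, since Assumption~\ref{assumption} fails when all $\kappa_a=0$.
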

\begin{proof}
We apply Theorem \ref{thm:euler-product} to $t+\Delta$ whose eigenvalues are those of $\Delta$ shifted by $t$.
For each $\cyc \in \cP(\rG)$, let $u_i$ be the eigenvalues of $\hol(\cyc)$, which are unit complex numbers by unitary of the holonomy. 
Then, by the triangular inequality, we have 
\begin{equation*}
   \big| \det\big(I-\ul{p}^{\ul{e}(\cyc)} \hol(\cyc)\big) \big|= \prod_{i=1}^N \big| 1-\ul{p}^{\ul{e}(\cyc)} u_i\big| \ge \big(1-\ul{p}^{\ul{e}(\cyc)} \big)^{N}\,. 
\end{equation*}
Thus, since $\Delta$ is nonnegative self-adjoint and $t\geq 0$, by Theorem \ref{thm:euler-product},
\[\det(t+\Delta)=|\det(t+\Delta)|
\geq (\ul{z}+t)^{\ul{n}} \prod_{\cyc\in \cP(\rG)} \big(1-\ul{p}^{\ul{e}(\cyc)} \big)^{N}=\det(t+\Delta_{0})^{N}\]
and the proof is complete.
\end{proof}

\section{Examples}\label{sec:examples}

\subsection{Acyclic quivers}\label{sec:acyclic-quiver}

As already note in the introduction, if the quiver $\rG$ is acyclic (see Figure~\ref{fig:acyclic-quiver}), then $\det \Delta=0$.

\begin{figure}[!ht]
    \centering
    \includegraphics[scale=.7]{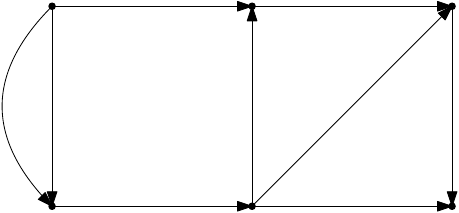}
    \caption{\small An acyclic quiver.}
    \label{fig:acyclic-quiver}
\end{figure}

This is consistent with Theorem \ref{thm:polcarDelta} since the sum in that case consists just of the term $\ul{z}^{\ul{n}}$. However, a finite acyclic quiver necessarily has a vertex with outdegree equal to $0$: otherwise, one could follow the arrows until reaching a vertex already visited, thus creating a cycle. Hence, there is an $a$ such that $z_a=0$ and the determinant is indeed zero.

Since there are no cycles, hence no prime cycles, Theorem \ref{thm:euler-product} reads in that case $\det \Delta= (\ul{z}+\ul{\kappa})^{\ul{n}}$. This is consistent with the direct computation of the determinant (since there are no cycles, only fixed points contribute, so the determinant is just the product of diagonal entries).

\subsection{Unicyclic quivers}\label{sec:unicyclic-quiver}

Suppose the quiver is composed of a unique cycle with some directed trees rooted on it, see Figure \ref{fig:crt-quiver}. This means that each vertex has exactly one outgoing edge, and $z_{s(e)}=x_e$ for all edges $e\in \rE$. Then Theorem \ref{thm:finite-euler} implies
\begin{equation}
    \det \Delta = \left(\prod_{e\in \rE} x_e^{n_{s(e)}}\right) \det(I-\hol(c))\,,
\end{equation}
where $c$ is the unique cycle.
\begin{figure}[!ht]
    \centering
    \includegraphics[scale=.8]{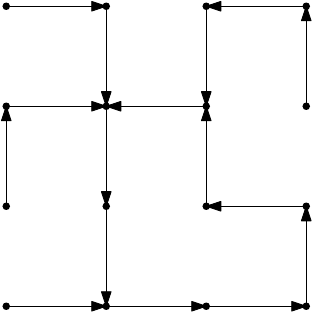}
    \caption{\small A cycle-rooted-tree quiver.}
    \label{fig:crt-quiver}
\end{figure}

\subsection{Quivers with a finite number of prime cycles}\label{sec:example-finite}

If a quiver has only simple cycles, and they are connected in an arborescence way in such a way that there is only one limit cycle, then there are only a finite number of prime cycles, which are exactly those simple cycles. 

Reciprocally, if a quiver has a non simple cycle, then it also has infinitely many prime cycles (if the cycle is written $ab$ with $a$ and $b$ two sub-cycles, then we can create an infinite number of prime words in $a$ and $b$ which correspond thus to prime cycles).

\begin{figure}[!ht]
    \centering
    \includegraphics[scale=.9]{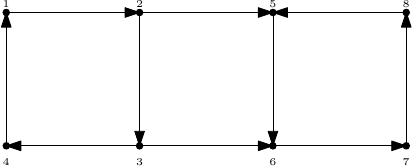}
    \caption{\small Quiver with only two prime cycles: $\cy 1,2,3,4 \yc$ and $\cy 5,6,7,8 \yc$. Each vertex has outdegree $1$ except for vertices $2$ and $3$ which have outdegree $2$.}
    \label{fig:finite-primes}
\end{figure}

In that case, the determinant of the Laplacian associated with any linear representation of that quiver is equal to the finite product given by Theorem \ref{thm:finite-euler}. 
Moreover, note that since the prime cycles are simple and disjoint, we can factor the term $\ul{z}^{\ul{n}}$ inside the determinants and thus get a polynomial factorization of the Laplacian determinant (seen as a polynomial in the~$\ul{x}$ variables).

In the example of Figure \ref{fig:finite-primes}, we have $\cP(\rG)=\{\cyc_1,\cyc_2\}$, where $\cyc_1=\cy 1,2,3,4\yc$ and $\cyc_2=\cy 5,6,7,8\yc$. Moreover, $z_{s(e)}=x_e$ except when $s(e)=2$ or $3$. Thus, $p(c_2)=1$ and $p(c_1)=x_{23}/(x_{23}+x_{25})x_{34}/(x_{34}+x_{36})$. Hence, for any quiver representation:
\begin{align*}
    \det \Delta & = \ul{z}^{\ul{n}} \det\left(I-\frac{x_{23}}{x_{23}+x_{25}}\frac{x_{34}}{x_{34}+x_{36}}\hol(c_1)\right) \det\left(I-\hol(c_2)\right)\,.
\end{align*}

\subsection{Sub-quivers having a finite number of prime cycles}

Introduce two maps $d$ and $\partial$ defined, for $f\in \bigoplus_{v\in \rV} \C_v^{n_v}$ and $e\in \rE$ by 
\begin{equation}
    d f(e)=f(s(e))-U_e f(t(e))\,,
\end{equation}
and for $\omega\in \bigoplus_{e\in \rE} \C_e^{n_{s(e)}}$ and $v\in \rV$ by
\begin{equation}
    \partial \omega(v)=\sum_{\substack{e\in \rE\\ s(e)=v}} \omega(e)\,.
\end{equation}
It then follows from \eqref{eq:covLaplacian} that $\Delta=\partial\circ d$.

Let us denote, for each $a\in\set{p}$ and $I\subset \set{n_a}$, $\C_a^I$ to be the subspace of $\C^{n_a}$ generated by the elements of the canonical basis indexed by $I$.

By the Cauchy--Binet formula, we have
\begin{equation}
    \det \Delta = \sum_{\{I_e:\,e\in \rE\}} \det \left(\partial \circ \proj{\bigoplus_{e\in\rE} \C_{s(e)}^{I_e}} \circ d\right)\,.
\end{equation}
where the sum is over all collections of subsets $\{I_e \subset \set{n_{s(e)}}: e\in \rE\}$ such that 
\begin{equation}
    \sum_{e\in \rE: s(e)=a}\vert I_e\vert=n_a\,.
\end{equation}
Now, we can apply our results to the determinants in the sum. 
Indeed, each term appears as a the data of a sub-quiver along with a representation. 
The sub-quiver has vertices $\rV$ and edges $\rF\subset \rE$ defined by those $e$ for which $I_e\neq \varnothing$. 
The representation is given by the same integers $(n_a: a\in \set{p})$ and matrices $V_e=P_{I_e} U_e$, where $P_{I_e}$ is the orthogonal projection matrix from~$\C_{s(e)}^{n_{s(e)}}$ to $\C_{s(e)}^{I_e}$.

Suppose that one of those sub-quivers has the property that it has only a finite number of prime cycles. Then we can apply the method of the previous Subsection~\ref{sec:example-finite} to compute it.

There will always be such terms as for instance unicyclic sub-quivers will appear in the sum and there weight can be computed using Subsection \ref{sec:unicyclic-quiver}.


\eject

\section{Concluding remarks and questions}\label{sec:conclusion}

We conclude this paper with a list of questions. 
\begin{enumerate}[1.]
\item In \cite[Theorem 9]{Kenyon}, Kenyon proved a variant of Forman's theorem for the case of $\mathrm{SL}_2(\C)$ representations on a bidirected graph, using Moore's notion of Qdeterminant and seeing the Laplacian as a matrix with noncommutative entries. Using a relation attributed to Dyson--Mehta linking this Qdeterminant with the classical one, Kenyon's expression is 
\begin{equation}\label{eq:sl2}
    \det \Delta = (\mathrm{Qdet} \Delta)^2 =\bigg(\sum_{\xi\in \Xi} \ul{x}^\xi \prod_{\cyc\in C(\xi)} (2-\Tr(\hol(\cyc))\bigg)^2\,.
\end{equation}
Expanding this square yields an expression which resembles that of Theorem \ref{thm:formanN}.
It seems likely that one could recover \eqref{eq:sl2} from Theorem \ref{thm:formanN}, using for instance the relations $\Tr(AB)+\Tr(AB^{-1})=\Tr(A)\Tr(B)$ to remove crossings in cycles.
\item 
Can one compute more precisely the term in the right-hand side of \eqref{eq:vector-fields}, for instance using the theory of heaps of pieces \cite{Viennot}? This appears to be closely related to the reinterpretations given in Subsection \ref{sec:reinterpretations}.
\item Can one pass to the large $N$ limit using Voiculescu's theorem (see \cite[Section 5]{Anderson-Guionnet-Zeitouni}) for limits of normalized traces of words in random matrices? Given a fixed quiver, is there a limit identity for the large $N$ limit?
\item 
If $\rG$ is embedded in a manifold $\Sigma$ and if $\ul{U}$ represents a flat connection on $\Sigma$, then the quantities $\Tr\hol(c)$ are constant for $c$ varying in a homotopy class of loops. Thus, $\det \Delta$ may be seen as a partition function for configurations of homotopy classes of multi-loops on $\Sigma$. In the case of a surface $\Sigma$, the functions $\prod_{c\in \Cyc}\Tr\hol(c)$ form a basis of the space of functions on the character variety of $\Sigma$, when $\Cyc$ runs in the set of multisets of simple curves on $\Sigma$ (laminations), by a result of Fock and Goncharov \cite{Fock-Goncharov}. We may wonder if our identities can help to extract the coefficients of $\det \Delta$ in this basis. This is similar in spirit to questions addressed in \cite{Kenyon-dd, Kassel-Kenyon} in rank $1$ and in \cite{Kenyon-webs} for general rank and in particular in the case of ${\rm SU}(3)$ using a new type of skein relations~\cite{Frohman-Sikora}.
\item In the case where $\rG$ is bidirected, all integers $n_a$ are equal to a common $N$, and the matrices~$U_e$ satisfy $U_{e^{-1}}=U_e^{-1}$ and are unitary, the Laplacian is a self-adjoint nonnegative operator. In that case, the coefficients of the monomials $\ul{x}^J$ in $\det \Delta$ are nonnegative (see for instance~\cite[Appendix A]{Cimasoni-Kassel}. Can we find an expression for $\det \Delta$ where the coefficients are manifestly positive (as is the case for Forman or Kenyon's theorems)? When we are in the above setup on a surface with unitary connection, do these positive coefficients give rise to interesting probability measures on homotopy classes of multicurves on surfaces? Can they be studied using the techniques mentioned in the previous item? 
\item 
When all $n_a=N$ and all $U_e=I_N$, we know that the constant vectors are in the kernel of~$\Delta$, hence its determinant vanishes. Can we exploit this observation to simplify the right-hand side of \eqref{eq:vector-fields}? What do the polynomial identities in $\ul{x}$ they correspond to mean?
More generally, for a quiver representation with nontrivial kernel, $\det \Delta=0$ and so we get in that case polynomial identities expressing the nullity of the coefficients of $\det \Delta$ seen as a polynomial in $\ul{x}$. Are these meaningful polynomial identities on the combinatorics of graphs, or just easy cancellations?
\item Can one adapt our results to the case where there are self-loops? This is used for instance in~\cite{KL1} and using the shearing map there would allow to treat Laplacians with potentials.
\item In \cite{KL2} we computed identities for the $\tau$-determinant. We have checked that our theorems above do not hold in general for the $\tau$-determinants. However, can we use the results from~\cite{KL2}, namely Theorem~\ref{thm:appendix}, to give another proof of Theorem~\ref{thm:main} starting from Corollary~\ref{coro:main}?
\item In the context of the dimer model, since Laplacians are roughly obtained as squares of Kasteleyn matrices (even in the case of a weighting by a local system, see e.g. \cite{Dubedat-CR}), is there a link between our expressions for the Laplacian determinant and the formulas obtained in~\cite{Kenyon-webs} for the Kasteleyn determinants in terms of webs and multitraces?
\item Can one establish a link between the Ihara zeta function and the discrete analogue of \eqref{eq:ray-singer}?
\end{enumerate}

\newpage


\bibliographystyle{hmralphaabbrv}
\bibliography{trace-identities}


\appendix

\section{Erratum to \cite{KL2} when $\tau(1)\ne 1$}\label{sec:erratum}

In \cite[Eq. (1)]{KL2}, we defined a notion of $\tau$-determinant for a general tracial map $\tau$. However, in our combinatorial expansion results, we implicitly made the assumption that $\tau(1)=1$. Let us reformulate here our results in the case where this equality is not assumed. For the sake of easier comparison, we formulate the results using the notation of \cite{KL2}.

\begin{theorem}[Corrects Theorem 3.1 of \cite{KL2}]\label{thm:appendix}
In the ring $S=K[a_{ij}: (i,j)\in\rE]$, 
\begin{equation*}
{\det}_{\tau} \Delta_{[m]}=\tau(1)^m \sum_{\rF\in\scF_m} a_{\rF} \prod_{c\in\mathscr{C}(\rF)} \left(1-\tau(1)^{-\ell(c)}\tau(h_c)\right)\,.
\end{equation*}
\end{theorem}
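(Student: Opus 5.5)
The plan is to redo the Zeilberger-style resummation of \cite{KL2} while keeping track of every factor $\tau(1)$ produced by a diagonal entry, instead of silently setting it to $1$. I will use the setting of \cite{KL2}. The matrix $\Delta_{[m]}$ is the restriction of the Laplacian to the vertices $\set{m}$, and its entries lie in $S\otimes R$. The diagonal entry is $\Delta_{ii}=\big(\sum_{j:(i,j)\in\rE} a_{ij}\big)\cdot 1$, where the sum runs over all out-edges of $i$, including those leaving $\set{m}$. The off-diagonal entry is $\Delta_{ij}=-a_{ij}h_{ij}$ for $i\neq j$ in $\set{m}$. There are no self-loops. The set $\scF_m$ consists of the sets $\rF$ of edges containing exactly one outgoing edge from each vertex of $\set{m}$, and $\mathscr C(\rF)$ is the set of cycles of $\rF$, which necessarily lie inside $\set{m}$.

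\textbf{Step 1: expand the $\tau$-determinant over disjoint cycle families.} In ${\det}_\tau \Delta_{[m]}=\sum_\sigma \epsilon(\sigma)\prod_{c\preccurlyeq\sigma}\tau(\cdots)$, the entries are linear in the scalars $a_{ij}$ and $\tau$ is $S$-linear.
\begin{itemize}
\item A fixed point $i$ of $\sigma$ contributes $\tau(\Delta_{ii})=\tau(1)\sum_{j}a_{ij}$.
\item A cycle $c$ of length $\ell(c)\ge 2$ contributes $(-1)^{\ell(c)}a_c\,\tau(h_c)$.
\end{itemize}
Writing $\epsilon(\sigma)=\prod_{c}(-1)^{\ell(c)-1}$, each nontrivial cycle carries exactly $-a_c\tau(h_c)$. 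This gives
\[
{\det}_\tau\Delta_{[m]}=\sum_{\mathcal S}\;\prod_{i\notin \cup\mathcal S}\Big(\tau(1)\sum_{j}a_{ij}\Big)\prod_{c\in\mathcal S}\big(-a_c\,\tau(h_c)\big),
\]
where $\mathcal S$ runs over sets of pairwise disjoint simple cycles of length at least $2$ in $\set{m}$.

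\textbf{Step 2: expand the diagonal sums and exchange summations.} Expanding each factor $\sum_j a_{ij}$ means choosing one out-edge for every vertex not covered by $\mathcal S$. Together with the edges of the cycles of $\mathcal S$, this produces an element $\rF\in\scF_m$ with monomial $a_\rF$, and $\mathcal S\subseteq\mathscr C(\rF)$.

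Conversely, a pair $(\rF,\mathcal S)$ with $\mathcal S\subseteq\mathscr C(\rF)$ determines the original term uniquely: $\mathcal S$ is given, and the chosen out-edges are those of $\rF$ at the uncovered vertices. So we have a bijection between the terms of Step 1 and such pairs. The weight of the pair $(\rF,\mathcal S)$ is
\[
a_\rF\,\tau(1)^{\,m-\sum_{c\in\mathcal S}\ell(c)}\prod_{c\in\mathcal S}\big(-\tau(h_c)\big).
\]

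\textbf{Step 3: factor the sum over $\mathcal S$.} The cycles of a fixed $\rF$ are vertex-disjoint, so the exponent $m-\sum_{c\in\mathcal S}\ell(c)$ splits as a sum over cycles. Factoring out $\tau(1)^m$, the sum over all subsets $\mathcal S\subseteq\mathscr C(\rF)$ becomes
\[
\tau(1)^m\prod_{c\in\mathscr C(\rF)}\big(1-\tau(1)^{-\ell(c)}\tau(h_c)\big).
\]
Summing over $\rF\in\scF_m$ yields the statement.

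The only delicate point is the bookkeeping of Step 2: checking that the correspondence is a bijection, and that no term of $\rF$ with an edge leaving $\set{m}$ is lost. That inversion is legitimate only if $\tau(1)$ is invertible in $S$, or at least $\tau(1)^{-\ell(c)}$ must be read formally as $\tau(1)^{m-\ell(c)}$ after multiplying out.
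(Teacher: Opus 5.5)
Your proof is correct and follows the intended route. The appendix gives no separate argument; the correction consists of rerunning the Zeilberger-style resummation of \cite{KL2} (the one also invoked in Section~\ref{sec:warmup}) while keeping the factor $\tau(1)$ from each fixed point, which is exactly what you do.

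On your closing caveat: the cycles of $\rF$ are disjoint, so $\sum_{c\in\mathscr C(\rF)}\ell(c)\le m$. The right-hand side should therefore be read as $\tau(1)^{m-\sum_{c\in\mathscr C(\rF)}\ell(c)}\prod_{c\in\mathscr C(\rF)}\big(\tau(1)^{\ell(c)}-\tau(h_c)\big)$, which is a genuine identity in $S$ requiring no invertibility of $\tau(1)$.
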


\begin{corollary}[Corrects Corollary 3.2 of \cite{KL2}]
In the quotient ring $S/(a_{ij}-a_{ji}: (i,j)\in\rE]$, 
\begin{equation*}
{\det}_{\tau} \Delta_{[m]}=\tau(1)^m \sum_{[\rF]\in\mathscr{F}_m} a_{\rF} \prod_{\substack{c\in\mathscr{C}(\rF)\\ \ell(c)=2}} \left(1-\tau(1)^{-\ell(c)}\tau(h_c)\right)\prod_{\substack{c\in\mathscr{C}(\rF)\\ \ell(c)\ge 3}} \left(2-\tau(1)^{-\ell(c)}\left(\tau(h_c)+\tau(h_{c^{-1}})\right)\right)\,.
\end{equation*}
\end{corollary}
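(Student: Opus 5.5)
The plan is to follow the proof of \cite[Theorem 3.1]{KL2}, namely Zeilberger's involutive/resummation approach, while tracking every place where the normalisation $\tau(1)=1$ was tacitly used. Start from the definition
\[{\det}_{\tau}\Delta_{[m]}=\sum_{\sigma\in\S_m}\epsilon(\sigma)\prod_{(i_1\cdots i_r)\preccurlyeq\sigma}\tau\big((\Delta_{[m]})_{i_1i_2}\cdots(\Delta_{[m]})_{i_ri_1}\big).\]
The diagonal entries of $\Delta_{[m]}$ are central scalars $z_i=\sum_{j}a_{ij}$ times the unit. The off-diagonal entries are $-a_{ij}h_{ij}$ with commutative weights $a_{ij}$.

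First, split each cycle of $\sigma$ the way the kinematic reduction of Section~\ref{sec:main} does with $p=n$. A fixed point $i$ contributes $\tau(z_i\cdot 1)=z_i\tau(1)$. This is the first place a factor $\tau(1)$ appears that was previously dropped. A cycle of length $r\ge2$ contributes $(-1)^r a_{c}\tau(h_c)$, where $\tau$ is applied once to the product of the off-diagonal entries. So every vertex not covered by a nontrivial cycle carries an extra $\tau(1)$, while a cycle $c$ of length $\ell(c)$ carries only one $\tau$.

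Next, normalise. Multiply and divide each cycle contribution by $\tau(1)^{\ell(c)}$, so that every one of the $m$ vertices carries exactly one factor $\tau(1)$. This gives
\[{\det}_\tau\Delta_{[m]}=\tau(1)^m\sum_{\text{disjoint cycle sets }\Cyc}\;\prod_{v\notin\Cyc}z_v\prod_{c\in\Cyc}\big(-a_c\,\tau(1)^{-\ell(c)}\tau(h_c)\big).\]
This is exactly the $\tau(1)=1$ formula of \cite{KL2}, applied with $\tau(h_c)$ replaced by $\tilde\tau(c):=\tau(1)^{-\ell(c)}\tau(h_c)$. We should work over $K$ with $\tau(1)$ invertible, or else clear denominators at the end, since both sides are polynomial.

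Then apply Zeilberger's resummation verbatim. Expand each $z_v=\sum_j a_{vj}$ as a choice of one outgoing edge per uncovered vertex. Combined with the cycle edges, this gives a rooted spanning forest with cycles $\rF\in\scF_m$. Regrouping over $\rF$ turns the sum over subsets of $\mathscr C(\rF)$ into $\prod_{c\in\mathscr C(\rF)}(1-\tilde\tau(c))$, which is the statement. This step only uses the combinatorics and the multiplicativity over disjoint cycles, so it is insensitive to the value of $\tau(1)$.

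For the corollary, pair each cycle $c$ of length at least $3$ with its reverse $c^{-1}$, which has the same weight $a_c$ modulo $a_{ij}=a_{ji}$ and the same length. Length-$2$ cycles are self-reverse. Summing over orientations gives $2-\tau(1)^{-\ell(c)}(\tau(h_c)+\tau(h_{c^{-1}}))$.

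The main obstacle is the bookkeeping check that the only dependence on $\tau(1)$ enters through the fixed points, one factor per vertex, together with the one missing factor per cycle vertex. In particular, one must confirm that no intermediate step of \cite{KL2} silently replaced $\tau(z\cdot1)$ by $z$ elsewhere, for example in the treatment of the root/boundary vertices outside $[m]$. I would test this on $m=1,2$ by direct expansion.
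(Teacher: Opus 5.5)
Your proposal is correct and follows the route the paper intends. The appendix gives no separate proof; it only says that \cite{KL2} tacitly used $\tau(1)=1$. Redoing the Zeilberger resummation of \cite{KL2} while keeping $\tau(z_v\cdot 1)=z_v\tau(1)$ at each fixed point gives Theorem~\ref{thm:appendix}, and the corollary then follows, as you do, by summing modulo $a_{ij}=a_{ji}$ over the two orientations of each cycle of length at least $3$. Your direct expansion is self-contained, so the worry in your last paragraph about hidden uses of $\tau(1)=1$ in \cite{KL2} needs no check, and invertibility of $\tau(1)$ is harmless: the cycles of $\rF$ are vertex-disjoint inside $[m]$, so after expanding only nonnegative powers of $\tau(1)$ appear.
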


\begin{theorem}[Corrects Theorem 5.1 of \cite{KL2}]
In the ring $S=K[a_{ij},(i,j)\in \E]$, 
\[{\det}_{\tau}  \Delta_{[Nm]}=\tau(1)^{Nm}\sum_{\rF\in \HF_{m,N}} a_{\rF} \prod_{c\in \Hcyc(\rF)} \big(1-\tau(1)^{-\ell(c)}\tau(\hb_{c})\big) \prod_{c\in \mathscr{S}(\rF)}\big(-\tau(1)^{-\ell(c)}\tau(\hb_{c})\big)\,.\]
\end{theorem}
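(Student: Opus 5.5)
The statement is Theorem~5.1 of \cite{KL2}, rewritten without assuming $\tau(1)=1$. Rather than redo the whole combinatorics of \cite{KL2}, I would reduce to the normalized case. Set $\lambda=\tau(1)$ and work over the field of fractions of $S$, or over $K$ with $\lambda$ invertible, and define the normalized trace $\tilde\tau=\lambda^{-1}\tau$. Then $\tilde\tau(1)=1$, and $\tilde\tau$ is still central. The difficulty is that ${\det}_{\tau}$ is not homogeneous in $\tau$ in a simple way. Each cycle of $\sigma$ contributes a single factor of $\tau$, so a permutation with $\ell(\sigma)$ cycles picks up $\lambda^{\ell(\sigma)}$. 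Replacing $\tau$ by $\lambda\tilde\tau$ therefore does not factor out of the determinant, and this is the first thing I would pin down.

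To handle it, I would rerun the proof of the original theorem in \cite{KL2} line by line and locate every place where $\tau(1)=1$ was silently used. In the Zeilberger-type resummation, the diagonal entries of $\Delta_{[Nm]}$ are sums of scalar weights $a_{ij}$ times $1$. A fixed point of $\sigma$ at such an entry gives $\tau(a_{ij}\cdot 1)=a_{ij}\tau(1)$, and each such factor contributes a $\tau(1)$.

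The strategy would be to track these factors.
\begin{enumerate}
\item Expand ${\det}_{\tau}$ as a sum over permutations. Split each diagonal entry into its edge terms, and each off-diagonal entry $-a_{ij}U_{ij}$, to get a sum over pairs consisting of an edge configuration $\rF$ and a permutation whose cycles follow $\rF$.
\item Each cycle $c$ of length $\ell(c)$ that uses only diagonal (stationary) choices contributes $\tau$ of a product of scalars, which equals $\tau(1)$ times the scalar weights. Each cycle that follows genuine edges contributes $\tau(h_c)$ (resp.\ $\tau(\hb_c)$) times the weights, with sign $-1$.
\item Apply the cancellation (involution) argument of \cite{KL2}, which pairs a cycle traversed kinematically with the same cycle split into fixed points. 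In the normalized case this produced the factor $1-\tau(h_c)$. Here the fixed-point version of a cycle of length $\ell(c)$ carries $\tau(1)^{\ell(c)}$, while the holonomy version carries $\tau(h_c)$. The pair therefore sums to $\tau(1)^{\ell(c)}\bigl(1-\tau(1)^{-\ell(c)}\tau(h_c)\bigr)$.
\item The vertices not on cycles are fixed points, each contributing one $\tau(1)$. Collecting everything, the total power is $\tau(1)^{Nm}$, since there are $Nm$ rows in total. This gives the global prefactor.
\item The singular cycles $\mathscr{S}(\rF)$ have no fixed-point partner in the involution, so they contribute only $-\tau(\hb_c)$. Rewriting this as $\tau(1)^{\ell(c)}\cdot\bigl(-\tau(1)^{-\ell(c)}\tau(\hb_c)\bigr)$ keeps the same prefactor.
\end{enumerate}

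The main obstacle is the bookkeeping in step~3 for the block setting of Theorem~5.1, where $N$ copies are stacked and the cycles in $\Hcyc(\rF)$ of the auxiliary forests $\HF_{m,N}$ may visit a base vertex through several layers. There I must check that the exponent attached to each cycle is exactly $\ell(c)$, its length in the lifted graph, and not its projected length. I would first verify the exponent count on the two simplest configurations, a single $2$-cycle and a single $3$-cycle with $N=1$. I would then argue that exponents are additive over disjoint cycles and fixed points, with total $Nm$. That additivity justifies factoring out $\tau(1)^{Nm}$ and dividing each cycle's factor by $\tau(1)^{\ell(c)}$.
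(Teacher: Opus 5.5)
Your proposal is correct and is essentially the argument the paper has in mind. The appendix gives no separate proof, only the remark that the expansions of \cite{KL2} silently used $\tau(1)=1$; rerunning them with one factor $\tau(1)$ per fixed point of $\sigma$ gives the statement once $\tau(1)^{Nm}$ is factored out, because a cycle of $\Hcyc(\rF)$ together with its fixed-point partner gives $\tau(1)^{\ell(c)}-\tau(\hb_{c})$, while a cycle of $\mathscr{S}(\rF)$, having no such partner, gives only $-\tau(\hb_{c})$.

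The exponent question you flag is settled by this per-row count, since $\ell(c)$ is the number of rows of $\Delta_{[Nm]}$ occupied by $c$. Just rephrase step~2 so it is clear that a cycle of $\sigma$ of length at least $2$ contributes a single $\tau(\cdot)$, whereas $\ell(c)$ fixed points contribute $\tau(1)^{\ell(c)}$, as you correctly use in step~3.
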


\begin{corollary}[Corrects Corollary 5.2 of \cite{KL2}]
In the quotient ring $S/(a_{ij}-a_{ji}:(i,j)\in \E)$,
\begin{align*}
{\det}_{\tau}  \Delta_{[Nm]}=& \tau(1)^{Nm}\sum_{[\rF]\in \mathscr{H}\!\mathscr{F}_{m,N}} a_{\rF} \prod_{\substack{c\in \Hcyc(\rF)\\ \ell(c)=2}} \big(1- \tau(1)^{-\ell(c)}\tau(h_{c})\big) \\
& \hspace{1cm} \prod_{\substack{c\in \Hcyc(\rF)\\ \ell(c)\geq 3}} \big(2- \tau(1)^{-\ell(c)}\left(\tau(h_{c})+\tau(h_{c^{-1}}\right))\big)\\
&\hspace{2cm}\prod_{\substack{c\in \mathscr{S}(\rF)\\ \ell(c)=2}} (-\tau(1))^{-\ell(c)}\tau(h_{c}) \prod_{\substack{c\in \mathscr{S}(\rF)\\ \ell(c)\geq 3}} (-\tau(1))^{-\ell(c)}\left(\tau(h_{c})+ \tau(h_{c^{-1}})\right)
\,.
\end{align*}
\end{corollary}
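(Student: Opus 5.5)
The plan is to deduce the statement from the corrected version of Theorem 5.1 of \cite{KL2} stated just above. We argue exactly as in the original derivation of Corollary 5.2 from Theorem 5.1 in \cite{KL2}, keeping track of the powers of $\tau(1)$ that were previously set to $1$.

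\textbf{Step 1: invariance under reversing cycles.} We work in the quotient ring $S/(a_{ij}-a_{ji})$. On $\HF_{m,N}$ we introduce the equivalence relation generated by reversing the orientation of any one cycle of $\rF$, whether in $\Hcyc(\rF)$ or in $\mathscr{S}(\rF)$. The equivalence classes are the elements $[\rF]\in\mathscr{H}\!\mathscr{F}_{m,N}$. Since $a_{ij}=a_{ji}$ in the quotient, the monomial $a_{\rF}$ is constant on each class. Reversing a cycle $c$ of length $\ell(c)$ replaces $c$ by $c^{-1}$. It leaves $\ell(c)$ unchanged and keeps the cycle in the same family ($\Hcyc$ or $\mathscr{S}$). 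It also replaces the holonomy $h_c$ (respectively $\hb_c$) by $h_{c^{-1}}$. One must check this last point against the definitions of \cite{KL2}, in particular how $\hb_c$ specializes to $h_c$ once the orientation is forgotten; I expect this to be the only step requiring a look back at \cite{KL2}.

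\textbf{Step 2: regrouping the sum.} We rewrite the sum over $\HF_{m,N}$ as a sum over classes $[\rF]$. Reversal acts independently on distinct cycles, so each class is a product, over the cycles of length $\ge 3$, of two-element choices $\{c,c^{-1}\}$. Cycles of length $2$ (backtracking cycles $i\to j\to i$) are their own reversal, so they offer only one choice. Consequently the sum of the summand of Theorem 5.1 over a class factorizes as a product over cycles:
\begin{itemize}
\item a cycle with $\ell(c)=2$ contributes its single factor unchanged;
\item a cycle $c\in\Hcyc(\rF)$ with $\ell(c)\ge 3$ contributes
\[\big(1-\tau(1)^{-\ell(c)}\tau(h_c)\big)+\big(1-\tau(1)^{-\ell(c)}\tau(h_{c^{-1}})\big)=2-\tau(1)^{-\ell(c)}\big(\tau(h_c)+\tau(h_{c^{-1}})\big);\]
\item a cycle $c\in\mathscr{S}(\rF)$ with $\ell(c)\ge 3$ contributes the sum of its two oriented factors, which is proportional to $\tau(h_c)+\tau(h_{c^{-1}})$.
\end{itemize}
The global factor $\tau(1)^{Nm}$ is common to all terms and factors out.

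\textbf{Step 3: main obstacle.} The delicate point is the bookkeeping of signs and powers of $\tau(1)$ in the $\mathscr{S}$-factors, which have no constant term. Theorem 5.1 gives the factor $-\tau(1)^{-\ell(c)}\tau(\hb_c)$. The printed corollary, however, has $(-\tau(1))^{-\ell(c)}$, and the two agree only when $\ell(c)$ is odd. I would first recheck the sign convention for the stationary cycles in Theorem 5.1 of \cite{KL2}. If the sign there is $(-1)^{\ell(c)}$ rather than $-1$, the printed corollary follows directly. Otherwise, the corollary's factor should read $-\tau(1)^{-\ell(c)}$, and the statement would need to be corrected to that form.
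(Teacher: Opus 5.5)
\textbf{Steps 1 and 2.} These are the intended argument. The paper gives no separate proof of this corrected corollary. It is meant to follow from the corrected Theorem 5.1 by the orientation-pairing of \cite{KL2} in the quotient ring, with the powers of $\tau(1)$ carried along. Reversing a cycle preserves its length and whether it lies in $\Hcyc(\rF)$ or $\mathscr{S}(\rF)$. So the prefactor $\tau(1)^{Nm}$ and every $\tau(1)^{-\ell(c)}$ pass through the regrouping unchanged, and the $\Hcyc$-factors come out exactly as printed. The switch from $\hb_c$ to $h_c$ is only notational: whichever holonomy is meant, reversing the cycle reverses the ordered product that defines it.

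\textbf{Step 3.} This is a genuine catch, and you should commit to it rather than leave it conditional. You do not need to consult \cite{KL2} to decide. The input is the corrected Theorem 5.1 as stated in this paper, where the stationary factor $-\tau(1)^{-\ell(c)}\tau(\hb_c)$ has a sign independent of $\ell(c)$. Regrouping by orientation preserves $\ell(c)$, so it cannot create the parity-dependent sign $(-1)^{\ell(c)}$ contained in $(-\tau(1))^{-\ell(c)}$.

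What your argument proves is the displayed identity with the last two products replaced by:
\begin{itemize}
\item the factor $-\tau(1)^{-\ell(c)}\tau(h_c)$ when $\ell(c)=2$;
\item the factor $-\tau(1)^{-\ell(c)}\big(\tau(h_c)+\tau(h_{c^{-1}})\big)$ when $\ell(c)\geq 3$.
\end{itemize}
The printed version agrees with this for odd $\ell(c)$ and has the opposite sign for even $\ell(c)$. In particular it is wrong for every stationary cycle of length $2$, already when $\tau(1)=1$. So this is not a $\tau(1)$-bookkeeping issue.

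The minus sign is also the internally consistent one. It matches the non-constant term of the factors $1-\tau(1)^{-\ell(c)}\tau(h_c)$ attached to $\Hcyc(\rF)$, which the printed corollary keeps parity-independent. Both come from a non-trivial cycle of the permutation in the expansion of ${\det}_\tau$. Its signature $(-1)^{\ell-1}$ combines with the $\ell$ minus signs of the off-diagonal Laplacian entries to give $-1$ for every $\ell$.

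So the printed formula has a misplaced parenthesis, and your argument proves the corrected form.
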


\begin{theorem}[Corrects Theorem 6.1 of \cite{KL2}]
In the ring $S=K[a_{ij}:(i,j)\in \rE]$,
\begin{equation*}
    {\det}_{\tau} \Delta_{[m]}=\left[\frac{\tau(1)}{d}\right]^m \sum_{\rF\in\scF_m} a_{\rF} \prod_{c\in\mathscr{C}(\rF)} \left(1-\left[\frac{d}{\tau(1)}\right]^{\ell(c)}\varepsilon_c \tau(h_c)\right)\,.
\end{equation*}
\end{theorem}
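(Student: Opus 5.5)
The plan is to deduce this statement from the corrected version of Theorem 3.1 of \cite{KL2}, that is, Theorem~\ref{thm:appendix}, in the same way the original Theorem 6.1 of \cite{KL2} was deduced from the original Theorem 3.1. That earlier derivation is a change of variables: the entries of $\Delta_{[m]}$ are rescaled by the factor $d$ and the holonomies are twisted by the signs $\varepsilon_c$. The only thing that changes now is that $\tau(1)$ must be tracked instead of being set to $1$.

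First, I will re-derive Theorem~\ref{thm:appendix} directly, to be sure of the powers of $\tau(1)$. Expand ${\det}_\tau\Delta_{[m]}$ as a sum over permutations of $[m]$. A fixed point $i$ contributes $\tau(\Delta_{ii})=\tau(1)\sum_{j}a_{ij}$, since the diagonal entry is a scalar multiple of the unit. A cycle $(i_1\cdots i_\ell)$ of length $\ell\ge2$ contributes $(-1)^{\ell}a_{i_1i_2}\cdots a_{i_\ell i_1}\tau(h_c)$, and this combines with the sign $(-1)^{\ell-1}$ of the cycle to give $-a_c\tau(h_c)$. Next, apply the Zeilberger resummation of \cite{KL2}: expand each diagonal factor over the outgoing edge chosen at $i$, and group terms by the resulting oriented cycle-rooted forest $\rF\in\scF_m$. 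For a cycle $c$ of $\rF$ of length $\ell(c)$, the two ways of producing it, from $\ell(c)$ fixed points or from one permutation cycle, give
\[\tau(1)^{\ell(c)}-\tau(h_c)=\tau(1)^{\ell(c)}\bigl(1-\tau(1)^{-\ell(c)}\tau(h_c)\bigr).\]
Each tree vertex contributes $\tau(1)$. Altogether this yields the global factor $\tau(1)^m$, which proves Theorem~\ref{thm:appendix}.

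For the present statement, I will identify, using the setting of Section 6 of \cite{KL2}, the substitution relating the operator there to a Laplacian of the type handled by Theorem~\ref{thm:appendix}. Presumably the diagonal is weighted by $1/d$ relative to the off-diagonal, and the cycle signs $\varepsilon_c$ arise from sign twists of the connection. The key step is then to check that this substitution acts on the fixed-point contributions by replacing $\tau(1)$ with $\tau(1)/d$, while leaving the cycle contributions $-\varepsilon_c\tau(h_c)$ unchanged. Rerunning the resummation above with $\tau(1)/d$ in place of $\tau(1)$ and $\varepsilon_c\tau(h_c)$ in place of $\tau(h_c)$ then gives the factor $[\tau(1)/d]^m$ and the cycle weights $1-[d/\tau(1)]^{\ell(c)}\varepsilon_c\tau(h_c)$.

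The main obstacle will be this bookkeeping. I must verify that the rescaling by $d$ really affects only the scalar (diagonal) parts. If it also rescales the off-diagonal blocks, the contribution of a cycle becomes $d^{\ell(c)}$ times $\tau(h_c)$, and the normalisation has to be rebalanced, which I will do by factoring out $(\tau(1)/d)^{\ell(c)}$ per cycle. I also need to confirm that $\varepsilon_c$ is multiplicative along the cycle, so that it attaches to $\tau(h_c)$ and not to $\tau(1)^{\ell(c)}$.
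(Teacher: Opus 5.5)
Your overall strategy is the intended one: the appendix gives no argument beyond rerunning the proofs of \cite{KL2} while keeping $\tau(1)$. Your re-derivation of Theorem~\ref{thm:appendix} by Zeilberger's resummation is correct. The gap is that, for the statement at hand, you never say what $\Delta_{[m]}$, $d$ and $\varepsilon_c$ are in Section 6 of \cite{KL2}, and the step you call the key one is only presumed.

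Running your resummation in general shows exactly what has to be proved. Suppose a fixed point $i$ contributes $\alpha\sum_j a_{ij}$, and a permutation cycle $c$ of length $\ell(c)\geq 2$ contributes $-\beta^{\ell(c)}\varepsilon_c a_c\tau(h_c)$. Then
\[
{\det}_\tau\Delta_{[m]}=\alpha^m\sum_{\rF\in\scF_m}a_{\rF}\prod_{c\in\mathscr{C}(\rF)}\Bigl(1-\bigl(\beta/\alpha\bigr)^{\ell(c)}\varepsilon_c\tau(h_c)\Bigr),
\]
and the statement is the case $\alpha=\tau(1)/d$, $\beta=1$. So everything specific to this theorem is a check made from the definitions in \cite{KL2}:
\begin{enumerate}
\item the diagonal entries have $\tau$-value $\frac{\tau(1)}{d}\sum_j a_{ij}$;
\item each permutation cycle contributes $-\varepsilon_c a_c\tau(h_c)$, with no power of $d$;
\item $\varepsilon_c$ depends only on the oriented cycle, so it is the same for the permutation cycle and for the corresponding cycle of $\rF$.
\end{enumerate}
As written, your proposal fits the setup to the target formula rather than deriving it.

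Your fallback does not work either. The identity you obtain depends only on $\alpha^m$ and on the ratio $\beta/\alpha$, and both are fixed by the matrix. Factoring $(\tau(1)/d)^{\ell(c)}$ out of each cycle is a rewriting that changes neither. Two examples of how other normalisations fail:
\begin{enumerate}
\item Off-diagonal entries carry an extra factor $d$ while fixed points keep weight $\tau(1)$, i.e.\ $\alpha=\tau(1)$, $\beta=d$. The cycle weights come out right, but the prefactor is $\tau(1)^m$, which is off by $d^m$.
\item The diagonal is scaled by $1/d$ and the off-diagonal by $d$. The prefactor comes out right, but the cycle weights become $1-(d^2/\tau(1))^{\ell(c)}\varepsilon_c\tau(h_c)$.
\end{enumerate}
To finish, you must read off $\alpha$ and $\beta$ from the actual definition of the operator in Section 6 of \cite{KL2}. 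Alternatively, if Theorem 6.1 there is obtained from Theorem 3.1 through an auxiliary ring and trace $\tau'$, you must show three things: the auxiliary unit has $\tau'$-value $\tau(1)/d$, the auxiliary holonomy of $c$ has $\tau'$-value $\varepsilon_c\tau(h_c)$, and the auxiliary $\tau'$-determinant equals ${\det}_\tau\Delta_{[m]}$.
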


\begin{theorem}[Corrects Theorem 7.1 of \cite{KL2}]
In the ring $S=K[a_{ij}: (i,j)\in\rE]$, 
\begin{equation*}
{\det}_{\tau} \Delta_{[Nm]}=\bigg[\frac{\tau(1)}{N}\bigg]^{mN} \sum_{\rF\in\scF_{m,N}} a_{\rF} \prod_{c\in\mathscr{C}(\rF)} \left(1-\bigg[\frac{N}{\tau(1)}\bigg]^{\ell(c)}\tau(\hb_c)\right)\,.
\end{equation*}
\end{theorem}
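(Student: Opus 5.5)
The plan is to rerun the proof of \cite[Theorem 7.1]{KL2} verbatim, keeping track of every place where $\tau(1)$ was silently replaced by its assumed normalised value. The relevant normalisation there is $\tau(1)=N$, which is what the $N$ in the bracket $\left[N/\tau(1)\right]$ suggests.

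\emph{Step 1: expansion.} Start from the definition of the $\tau$-determinant,
\[{\det}_{\tau}(M)=\sum_{\sigma\in\S_{Nm}}\epsilon(\sigma)\prod_{(i_1\cdots i_r)\preccurlyeq\sigma}\tau(M_{i_1i_2}\cdots M_{i_ri_1}),\]
applied to $M=\Delta_{[Nm]}$. The diagonal entries of $\Delta_{[Nm]}$ are scalars, namely sums of the $a_{ij}$, times the unit. Expanding each entry as a sum over edges gives a sum over pairs $(\sigma,\text{edge labels})$. A cycle of $\sigma$ that only uses diagonal (scalar) entries then contributes a monomial in the $a_{ij}$ times $\tau(1)$. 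A cycle using off-diagonal entries along a closed path $c$ contributes the monomial times $\tau(\hb_c)$.

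\emph{Step 2: isolate the $\tau(1)$ factors.} The key bookkeeping is to record, for each term, the power of $\tau(1)$ produced by the purely diagonal parts. In the original proof these factors were set equal to $N$, and that substitution is exactly what has to be undone. I would do this by a homogenisation trick. Introduce the rescaled tracial map $\tau''=\frac{N}{\tau(1)}\tau$, which is central and satisfies $\tau''(1)=N$, so the original theorem applies to it verbatim. Then compare ${\det}_\tau$ with ${\det}_{\tau''}$. Each cycle of $\sigma$ picks up one factor $\lambda=\tau(1)/N$, so
\[{\det}_\tau=\sum_\sigma\epsilon(\sigma)\,\lambda^{\ell(\sigma)}\prod\tau''(\cdots),\]
where $\ell(\sigma)$ is the number of cycles of $\sigma$.

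\emph{Step 3: apply the resummation.} Because $\lambda^{\ell(\sigma)}$ is not constant in $\sigma$, one cannot factor it out directly. Instead I would run the Zeilberger-style sign-reversing involution and resummation of \cite{KL2} with the extra weight $\lambda$ per cycle. The involution pairs terms differing by merging or splitting cycles at a repeated vertex, and it is precisely this step that may fail to be weight-preserving when $\lambda\neq1$.

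I would check how the surviving objects are weighted. In a forest $\rF\in\scF_{m,N}$ the $mN$ rows are covered by $mN$ outgoing edges. Each cycle $c$ of $\rF$ of length $\ell(c)$ arises either as $\ell(c)$ singleton ("diagonal") cycles, with weight $\lambda^{\ell(c)}$, or as one genuine $\sigma$-cycle, with weight $\lambda\cdot\tau''(\hb_c)=\tau(\hb_c)$ and a minus sign. This produces
\[\lambda^{\ell(c)}\bigl(1-\lambda^{-\ell(c)}\tau(\hb_c)\bigr).\]
Tree edges each carry one diagonal factor $\lambda$. Collecting all factors gives $\lambda^{mN}$ in front and $\prod_c\bigl(1-[N/\tau(1)]^{\ell(c)}\tau(\hb_c)\bigr)$, which is the claimed formula.

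\emph{Main obstacle.} The hardest part is verifying that the involution of \cite{KL2}, which cancels the non-forest terms, still cancels them with the per-cycle weight $\lambda$. The natural first check is that it pairs terms with the same number of cycles. If instead it changes the cycle count, as a merge/split does, I would restrict the involution to modifications of the edge labelling (loop-erasure type moves) that preserve $\ell(\sigma)$. I would then verify separately, using the fact that the diagonal entry is $\sum_j a_{ij}$ times $1$, that the remaining merges happen only within purely scalar cycles, where the $\lambda$-weights match. The corrected Theorems 3.1, 5.1 and 6.1 follow by the same bookkeeping with $N$ replaced by $1$, or by $d$ respectively.
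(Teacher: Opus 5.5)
You have the right core: in Step 3, a cycle of length $\ell(c)$ in $\rF$ arises either from $\ell(c)$ fixed points or from one cycle of $\sigma$. But you frame the argument around a sign-reversing involution that ``cancels the non-forest terms'' by merging or splitting cycles, and you leave its compatibility with $\lambda$ unresolved as the main obstacle. No such cancellation occurs. The elements of $\scF_{m,N}$ are cycle-rooted (functional) graphs, and the formula keeps a factor for every $c\in\mathscr C(\rF)$; this was already so in \cite{KL2}, where the cycle factors do not vanish in general. An involution killing every configuration containing a cycle would leave that product empty. So your proposed repair (restricting to $\ell(\sigma)$-preserving loop-erasure moves) addresses a non-issue and is not an argument.

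What is actually missing is a factorisation lemma. After expanding each scalar diagonal entry over outgoing edges, the terms of ${\det}_\tau\Delta_{[Nm]}$ are in bijection with pairs $(\rF,S)$, where $\rF\in\scF_{m,N}$ and $S\subseteq\mathscr C(\rF)$. The reason is that the edge chosen at a non-fixed point $i$ of $\sigma$ goes to $\sigma(i)$. Hence any cycle of $\rF$ through a non-fixed point is a cycle of $\sigma$, and all remaining edges (tree edges and cycles not in $S$) come from fixed points. The weight is multiplicative over tree edges and cycles. A cycle in $S$ contributes the sign $(-1)^{\ell(c)-1}(-1)^{\ell(c)}=-1$ times $\tau(\hb_c)$ times its edge weights. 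Summing over $S$ therefore gives $\prod_c\big(\text{fixed-point weight of } c-\tau(\hb_c)\cdot\text{edge weights of } c\big)$ directly, with nothing cancelled.

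Two further corrections. First, \cite{KL2} implicitly assumed $\tau(1)=1$, not $\tau(1)=N$. The appendix says so, and it presents the $\tau(1)=N$ specialisation as new. So your claim that the original theorem applies verbatim to $\tau''=\frac{N}{\tau(1)}\tau$ is false. You never actually use it, and the rescaling, with its non-uniform factor $\lambda^{\ell(\sigma)}$, is an unnecessary detour: compute with $\tau$ directly.

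Second, the weights you use — $\lambda=\tau(1)/N$ per fixed-point edge and $\tau(\hb_c)$ per cycle of $\sigma$ — are read off from the target rather than derived. A fixed point contributes $\tau$ of a scalar, i.e.\ $\tau(1)$ times a sum of edge weights. You must check, from the actual entries of $\Delta_{[Nm]}$ and the normalisation of $a_{\rF}$ and $\hb_c$ in \cite{KL2}, that this equals $\frac{\tau(1)}{N}a_e$ per chosen edge. With the lemma and this check, the $mN$ edges of $\rF$ give the prefactor $[\tau(1)/N]^{mN}$, and each cycle gives $1-[N/\tau(1)]^{\ell(c)}\tau(\hb_c)$, which is the statement. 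The paper itself gives no separate proof; this direct re-expansion is what its restatement presupposes.
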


In particular, when $\tau(1)=N$ (which is the case when taking $H=M_N(\C)$, $K=\C$, and $\tau:H\to K$ given by the trace $\tau(\cdot)=\Tr(\cdot)$) the last equation simplifies nicely to give the following corollary which did not appear in \cite{KL2}.

\begin{corollary}
Assume that $\tau(1)=N$. In the ring $S=K[a_{ij}: (i,j)\in\rE]$, 
\begin{equation*}
{\det}_{\tau} \Delta_{[Nm]}= \sum_{\rF\in\scF_{m,N}} a_{\rF} \prod_{c\in\mathscr{C}(\rF)} \left(1-\tau(\hb_c)\right)\,.
\end{equation*}
\end{corollary}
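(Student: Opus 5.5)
This identity follows from the Erratum version of \cite[Theorem 7.1]{KL2} (the preceding theorem) by substituting $\tau(1)=N$. I would therefore not redo the combinatorics; I would only check that the normalising factors cancel and that the identity is meant in the same ring $S=K[a_{ij}:(i,j)\in\rE]$.

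First, I would specialise the preceding theorem directly. With $\tau(1)=N$, the ratio $\tau(1)/N$ equals $1$. The prefactor $[\tau(1)/N]^{mN}$ therefore becomes $1$. Inside each cycle factor, $[N/\tau(1)]^{\ell(c)}$ also becomes $1$, so that factor reduces to $1-\tau(\hb_c)$. The index sets $\scF_{m,N}$ and $\mathscr{C}(\rF)$ and the weights $a_{\rF}$ do not depend on $\tau$, so the displayed formula follows at once.

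Second, I would justify that the preceding theorem really is the right statement to specialise. In \cite{KL2} the relevant proof expands ${\det}_\tau\Delta_{[Nm]}$ \emph{à la Zeilberger}. Each fixed point, and each cycle that is contracted into a vertex block, contributes $\tau$ of an identity block, which is a power of $\tau(1)$. The original paper silently set these powers to $1$. I would rerun that bookkeeping while tracking every such factor. Each vertex block of size $N$ contributes $\tau(1)/N$ per diagonal slot, which accounts for the global $[\tau(1)/N]^{mN}$. Each cycle $c$ of length $\ell(c)$ removes $\ell(c)$ of these slots, which produces the compensating factor $[N/\tau(1)]^{\ell(c)}$ in front of $\tau(\hb_c)$.

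The only real obstacle is this second step: confirming the exponents in the corrected Theorem 7.1, namely $mN$ globally and $\ell(c)$ per cycle, by redoing the sign-reversing involution with $\tau(1)$ kept symbolic. Since we are allowed to assume the earlier stated corrected theorem, the corollary itself is the one-line substitution described in the first step.
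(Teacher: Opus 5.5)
Your proposal is correct and matches the paper, which obtains this corollary exactly by setting $\tau(1)=N$ in the corrected Theorem~7.1 of \cite{KL2}, so that both the prefactor $[\tau(1)/N]^{mN}$ and the cycle factors $[N/\tau(1)]^{\ell(c)}$ collapse to $1$. Your second step, re-deriving those exponents from a Zeilberger-type expansion, is not needed (and as written is only a heuristic), since the corrected theorem is stated just before and can be taken as given.
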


\vfill

\end{document}